\renewcommand{\@seccntformat}[1]{\bf\csname the#1\endcsname.}
\renewcommand{\section}{\@startsection{section}{1}
 \z@{.7\linespacing\@plus\linespacing}{.5\linespacing}
 {\normalfont\upshape\bfseries\centering}}
\renewcommand{\@biblabel}[1]{\@ifnotempty{#1}{#1.}}
\theoremstyle{plain}
\newtheorem{thm}{Theorem}[section]
\newtheorem{lem}[thm]{Lemma}
\newtheorem{prop}[thm]{Proposition}
\theoremstyle{definition}
\newtheorem{ex}[thm]{Example}
\newtheorem{defn}[thm]{Definition}
\def\R{{\mathcal R}}
\def\N{{\mathcal N}}
\def\T{{\mathcal T}}
\def\L{{\mathcal L}}
\def\>{\succ}
\def\<{\prec}
\def\M{{\mathcal M}}
\def\b{\beta}
\def\a{\alpha}
\def\lt{\triangleleft}
\def\rt{\triangleright}
\def\l{\lambda}
\def\p{\partial}
\def\m{\mu}
\begin{document}
	\title[Sania Asif\textsuperscript{1}
 ]{Exploring Cohomology, Deformations, and Hom-NS Structures in Hom-Leibniz Conformal Algebras through Nijenhuis Operators}
\author{Sania Asif\textsuperscript{1}
 }
	\address{\textsuperscript{1}Institute of Mathematics, Henan Academy of Sciences, Zhengzhou 450046, P. R. China.}
	\email{\textsuperscript{1}11835037@zju.edu.cn, 200036@nuist.edu.cn }
	\keywords{Nijenhuis operator, Hom-Leibniz conformal algebra, Representations, Formal deformation, Cohomology, Hom-$NS$ Leibniz conformal algebra, Rota-Baxter operator}
	\subjclass[2000]{Primary 11R52, 15A99, 17B67, 17B10, Secondary 16G30}
	\date{\today}
    \begin{abstract} 
    This paper studies the Nijenhuis operator on Hom-Leibniz conformal algebra, defining their representations and cohomologies. We determine the cohomologies for both Hom-Leibniz conformal algebra and Nijenhuis operators on Hom-Leibniz conformal algebra. Subsequently, establishing the cohomology of Hom-Nijenhuis-Leibniz conformal algebras. As an application to this cohomology, we study formal deformations of the Nijenhuis operator on Hom-Leibniz conformal algebra. Additionally, we introduce Hom-NS-Leibniz conformal algebra and explore how various operators such as Rota-Baxter operator, Twisted Rota Baxter operator, and Nijenhuis operators can provide Hom-NS-Leibniz conformal algebras.\end{abstract}
\footnote{Author would like to thank the Editor and referees for their valuable input in our manuscript.}
	\maketitle
	\section{ Introduction}
    Over the past fifteen years, there has been significant interest in algebras twisted by homomorphisms, known as Hom-type algebras. Hartwig, Larsson, and Silvestrov introduced the concept of Hom-Lie algebras in their work on $q$-deformations of the Witt and Virasoro algebras. Subsequently, various Hom-type algebras, including Hom-associative, Hom-Leibniz, and Hom-Poisson algebras, have been extensively studied, see \cite{AW2, MS, Yuan, ZYC}. Theories about Hom-associative conformal algebras and Hom-Lie conformal algebras have been developed, along with formal deformation theories for Hom-associative and Hom-Lie algebras in \cite{AW, AWW}. These structures can deform according to the Hom-analog of the Chevalley-Eilenberg cohomology. 
    \par Leibniz algebra, introduced by Bloh in \cite{B} and later re-examined by Loday in \cite{L}, generalizes Lie algebra by relaxing the skew-symmetry requirement. It has applications in geometry, physics, and homotopy theory. Leibniz conformal algebra extends this concept to the conformal setting and is closely related to field algebras, which generalizes vertex operator algebras in a non-commutative context. The Leibniz conformal algebra was first considered by Bakalov, Kac and Voronov \cite{BKV} as the non-skew-symmetric analogue of Lie conformal algebras. Subsequently, \cite{Zhang} introduces the cohomology of Leibniz conformal algebras, and \cite{Wu} developed the concept of a Leibniz pseudoalgebra which is a multivariable generalization of Leibniz conformal algebras. These results extend to the classification of torsion-free Leibniz conformal algebra of rank three in \cite{Wu2}. The extension theory of Leibniz conformal algebras is examined in \cite{HY}, while \cite{ZH} studies quadratic Leibniz conformal algebras and their central extensions as a special class of Leibniz conformal algebras. Beyond individual studies, Leibniz conformal algebras are also explored for operators; \cite{GW} examines Twisted relative Rota-Baxter operators on Leibniz conformal algebras. Most recently, \cite{DS}  explores the homotopification of Leibniz conformal algebras. Additional research in this aspect is available in \cite{WGZ}.
\par The concept of a Nijenhuis operator was introduced by Albert Nijenhuis in the mid-20th century in \cite{NA}. Nijenhuis's work focused on differential forms and the integrability conditions of almost complex structures, leading to the definition of what is now known as the Nijenhuis tensor. This tensor measures the failure of an almost complex structure to be integrable. An operator is termed a Nijenhuis operator if its associated Nijenhuis tensor vanishes, indicating integrability. Algebraically, For an algebra $\L$ with multiplication $\circ:\L\otimes\L \to \L,$ we can define Nijenhuis operator as an operator $\N:\L \to \L$, such that following equation hold
$$\N(p) \circ \N (q)  = \N ( \N (p)\circ q +  p\circ \N (q) - \N  (p\circ q) ), \quad \forall~ p,q\in \L. $$
The study of Nijenhuis operators has significantly influenced various mathematical fields, specifically complex differential geometry and algebraic deformation theory. The renowned Newlander-Nirenberg theorem asserts that an almost complex structure on a manifold (or Lie algebra) qualifies as a complex structure if and only if it is a Nijenhuis operator \cite{NN}. In algebraic deformation theory, as explored by Gerstenhaber \cite{G}, a Nijenhuis operator on an associative or Lie algebra facilitates a straightforward linear deformation of the respective algebra. Recent developments on the Nijenhuis operator on various (conformal) algebras and their cohomology and deformations can be explored in \cite{AW, AWW, LSZB, BR, WSBL}.
\par Motivated by the significance of Leibniz conformal algebras, Nijenhuis operators, and Hom-algebras, we delve into the intricate aspects of their cohomology and deformation structures. This paper focuses on studying Nijenhuis operators within the framework of Hom-Leibniz conformal algebras. Although many researches on Nijenhuis operator and Leibniz conformal algebras as individual studies, no research has yet been conducted about Nijenhuis operator on Leibniz conformal algebras and Hom-Leibniz conformal algebras. Studying Nijenhuis operators within the framework of Hom-Leibniz conformal algebras can provide us results about Nijenhuis operators on the Leibniz conformal algebras by keeping twist map $\a= id$. That is why this study is quite important and aids in filling up many gaps for future research.
Building upon further discussion, the goal of this paper is not only to construct the cohomology of the Nijenhuis operator on Hom-Leibniz conformal algebras but also the cohomology of Hom-Nijenhuis Leibniz conformal algebras. We first study the Nijenhuis operator on Hom-Leibniz conformal algebra. We define the representation and cohomology associated with a Hom-Leibniz conformal algebra and Hom-Nijenhuis operators individualistically; their corresponding coboundary operators are expressed by $\delta_{HomL}$ and $\p_{HN}$, respectively can be seen in the textual body of this paper. Later we use the cohomology of Hom-Leibniz conformal algebra and cohomology of Nijenhuis operator to determine the cohomology of Hom-Nijenhuis Leibniz conformal algebras. As an application to cohomology, we study formal deformations of the Nijenhuis operator on Hom-Leibniz conformal algebra. We introduced Hom-NS-Leibniz conformal algebra as the Hom-analog of $NS$-Lie conformal algebra and comprehended how various operators can yield a Hom-NS-Leibniz conformal algebras. Hom-$NS$-Leibniz conformal algebras relate to twisted relative Rota-Baxter operators in the same way that Hom-Leibniz dendriform conformal algebras are related to relative Rota-Baxter operators. Nijenhuis operator on a Hom-Leibniz conformal algebra naturally induces a Hom-$NS$-Leibniz conformal algebra structure. Finally, we introduce twisted Rota-Baxter operators on Hom-Leibniz conformal algebras, which also induces Hom-$NS$-Lie conformal algebra structures using the Nijenhuis operator. For more about $NS$-algebras, see \cite{AW, AWW, GW}.
\par This paper is organized as follows; in Section $2$, we provide preliminaries about Nijenhuis operators and discuss the representations of Hom-Nijenhuis Leibniz conformal algebra. In Section $3$, we focus on defining the cohomology of a Hom-Nijenhuis-Leibniz conformal algebra. Section $4$ is devoted to finding the deformation of Hom-Nijenhuis Leibniz conformal algebra. Finally, Section $5$ studies Hom-NS-Leibniz conformal algebra and Twisted-Rota-Baxter operators.
\par Throughout this paper all vector spaces, tensor products, and (bi-)linear maps are over a field $\mathbb{K}$ of characteristic~$0$.
\section{ Representations of Hom-Nijenhuis Leibniz conformal algebra}
	In this section, we present some preliminary results to strengthen our findings in the later sections. \begin{defn}\label{defdef} 
	A $\mathbb C[\p]$-module $\L$ is called a Hom-Leibniz conformal algebra $\L$, if it is equipped with a $\mathbb C$-bilinear map $\L \otimes \L \to \L[\l ], p\otimes q \mapsto [p_\l q]$, called the $\l$-bracket, and a linear map $\a:\L\to\L$ that satisfies the following axioms
	\begin{equation}
	\begin{aligned}
	\a\p&=\p\a\\
	\a[p_\l q]&=[\a(p)_\l\a(q)], &\text{(Multiplicativity)}\\
	[\p p_\l q] &= -\l [p_\l q], [p_\l \p (q)] = (\p +\l )[p_\l q], &\text{(Conformal-sesqui-linearity)} \\ [\a(p)_\l [q_\m r]] &= [[p_\l q]_{\l+\m}\a(r)]+[\a(q)_{\m} [p_\l r]], & \text{(Hom-Leibniz conformal identity)} 
	\end{aligned}
	\end{equation}
	for all $p,q,r\in \L.$
\end{defn} If Hom-Leibniz conformal algebra satisfies an additional condition of conformal skew-symmetry, given by $$[p_\l q]= -[{q}_{-\l-\p} p], ~~~~~~~~~\text{(skew-symmetry)}$$ then we call it a Hom-Lie conformal algebra, the obtained Hom-Lie conformal algebra is called Lie conformal algebra if twist map $\a=Id$. Similar to a Lie conformal algebra, a Leibniz conformal algebra is called finite if it is finitely generated $\mathbb{C}[\p]$-module and vice versa. Moreover, all Hom-Lie conformal algebras are Hom-Leibniz conformal algebras.
\begin{ex}
	Consider a $\mathbb{C}[\p]$-module $\L$ be a Leibniz conformal algebra of rank $1$. Then $\L$ is isomorphic to Virasoro Lie conformal algebra satisfying $[\L_\l \L]= (\p+ 2 \l) \L$.
\end{ex}
\begin{ex}
   Let $\L$ be a Hom-Leibniz algebra with Lie bracket $[\cdot, \cdot]$. Let $Cur(\L):= \mathbb{C}[\p]\otimes \L$ be the $\mathbb{C}[\p]$-module. Then it is a Hom-Leibniz conformal algebra with $\l$-bracket given by
\begin{align*}
    \a(f(\p) \otimes  p) &= f(\p) \otimes  \a(p),\\
[(f(\p) \otimes  p)_\l (g(\p) \otimes  q)] &= f(-\l)g(\p+\l) \otimes  [p,q], \quad \forall ~p,q \in \L.
\end{align*}
\end{ex}
  \begin{defn}\label{Nijenhuis}A $\mathbb{C}[\p]$-linear map $\N_{h} : \L \to \L$ is called a Nijenhuis operator on Hom-Leibniz conformal algebra $(\L,[\cdot_\l\cdot],\a)$, if following axioms hold
 \begin{equation}\begin{aligned}
		\a\circ \N_{h} &=\N_{h}\circ \a,\\
		[\N_{h}(p)_{\l} \N_{h}(q)]&= \N_{h}([\N_{h}(p)_{\l}q]+ [p_{\l}\N_{h}(q)]- \N_{h}[p_{\l}q]), \quad \forall~ p,q\in \L, ~~\l\in \mathbb{C}. \end{aligned}\end{equation}\end{defn} Note that the identity map on $\L$ always satisfies Nijenhuis operator identity.
\begin{defn}\label{Rota-Baxter}A $\mathbb{C}[\p]$-linear map $\R : \L \to \L$ is called a Rota-Baxter operator of weight $\theta\in \mathbb{F}$ on Hom-Leibniz conformal algebra $(\L,[\cdot_\l\cdot],\a)$, if following axioms hold\begin{equation}\begin{aligned}
	\a\circ \R &=\R\circ \a,\\
	[\R(p)_{\l} \R(q)]&= \R([\R(p)_{\l}q]+ [p_{\l}\R(q)]+ \theta[p_{\l}q]), \quad \forall~ p,q\in \L. \end{aligned}\end{equation}
    \end{defn} 
 \begin{defn}\label{modified Rota-Baxter}A $\mathbb{C}[\p]$-linear map $\R : \L \to \L$ is called a modified Rota-Baxter operator of weight $\theta\in \mathbb{C}$ on Hom-Leibniz conformal algebra $(\L,[\cdot_\l\cdot], \a)$, if following axioms hold\begin{equation}\begin{aligned}
		\a\circ \R &=\R\circ \a,\\
		[\R(p)_{\l} \R(q)]&= \R([\R(p)_{\l}q]+ [p_{\l}\R(q)])+ \theta[p_{\l}q], \quad \forall~ p,q\in \L. \end{aligned}\end{equation}
        \end{defn} 
\begin{defn}
	A Hom-Leibniz conformal algebra $(\L,[\cdot_\l\cdot],\a)$ when equipped with the Nijenhuis operator $\N_{h}$, is called as Hom-Nijenhuis-Leibniz conformal algebra. We denote such algebra by $(\L,[\cdot_\l\cdot],\a,\N_{h})$ or simply by $\L_{\N_h}$.
\end{defn}
\begin{defn}
 Let $\L_{\N_h}=(\L,[\cdot_\l\cdot],\a,\N_{h})$ and $\L_{\N_h}'=(\L',[\cdot_\l\cdot]',\a',\N_{h}')$ are two Hom-Nijenhuis-Leibniz conformal algebras, a morphism between them is a map $\phi: \L_{\N_h}\to \L_{\N_h}'$, such that $\phi\circ \N_{h}=\N_{h}'\circ \phi$. 
\end{defn}
\begin{prop}\label{prop2.9}
	Let $\L_{\N_h}$ be a Hom-Nijenhuis-Leibniz conformal algebra by defining a new Leibniz conformal bracket 
    $$[p_\l q]_{\N_h}=[\N_{h}(p)_{\l}q]+ [p_{\l}\N_{h}(q)]- \N_{h}[p_{\l}q], \quad \forall ~~p,q\in \L,$$ we can obtain a new Hom-Nijenhuis-Leibniz conformal algebra $(\L_{\N_h},[\cdot_\l\cdot]_{\N_h},\a)$. Moreover, $\N_{h}$ also acts as a Nijehnuis operator on $(\L_{\N_h},[\cdot_\l\cdot]_{\N_h},\a)$. In this case the map $\N_{h}: (\L_{\N_h},[\cdot_\l\cdot]_{\N_h},\a) \to (\L_{\N_h},[\cdot_\l\cdot],\a)$ is a morphism of Hom-Nijenhuis Leibniz conformal algebra.
\end{prop}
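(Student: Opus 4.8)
The plan is to check the Hom-Leibniz conformal axioms for $(\L,[\cdot_\l\cdot]_{\N_{h}},\a)$ one at a time, leaving the Hom-Leibniz conformal identity for last, and then to verify separately that $\N_{h}$ remains a Nijenhuis operator for the new bracket. The operation $[\cdot_\l\cdot]_{\N_{h}}$ is manifestly $\mathbb{C}$-bilinear with values in $\L[\l]$, and $\a\p=\p\a$ is unchanged. Conformal sesqui-linearity of $[\cdot_\l\cdot]_{\N_{h}}$ is read off summand by summand from that of $[\cdot_\l\cdot]$ and the $\mathbb{C}[\p]$-linearity of $\N_{h}$ (so that $\N_{h}\p=\p\N_{h}$): every term of $[\p p_\l q]_{\N_{h}}$ scales by $-\l$, and every term of $[p_\l\p q]_{\N_{h}}$ acquires the factor $\p+\l$. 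Multiplicativity $\a[p_\l q]_{\N_{h}}=[\a(p)_\l\a(q)]_{\N_{h}}$ follows in the same termwise way from multiplicativity of $[\cdot_\l\cdot]$ and $\a\circ\N_{h}=\N_{h}\circ\a$. The relation on which the remaining two points rest is the intertwining identity
$$\N_{h}\big([p_\l q]_{\N_{h}}\big)=[\N_{h}(p)_\l\N_{h}(q)],\qquad p,q\in\L,$$
which is exactly the Nijenhuis condition of Definition \ref{Nijenhuis} rewritten. It already yields the last assertion: $\N_{h}$ commutes with $\p$ and $\a$, sends the bracket $[\cdot_\l\cdot]_{\N_{h}}$ onto $[\cdot_\l\cdot]$, and trivially commutes with the Nijenhuis operators on source and target (both being $\N_{h}$), so it is a morphism $(\L,[\cdot_\l\cdot]_{\N_{h}},\a,\N_{h})\to(\L,[\cdot_\l\cdot],\a,\N_{h})$ of Hom-Nijenhuis-Leibniz conformal algebras.

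For the Hom-Leibniz conformal identity of $[\cdot_\l\cdot]_{\N_{h}}$ the plan is to expand the three expressions
$$[\a(p)_\l[q_\m r]_{\N_{h}}]_{\N_{h}},\qquad [[p_\l q]_{\N_{h}}{}_{\l+\m}\a(r)]_{\N_{h}},\qquad [\a(q)_\m[p_\l r]_{\N_{h}}]_{\N_{h}}$$
by substituting the definition of $[\cdot_\l\cdot]_{\N_{h}}$ (first the inner occurrence, then the outer one), using $\a\circ\N_{h}=\N_{h}\circ\a$ together with the intertwining identity to eliminate every occurrence of $\N_{h}$ applied to a deformed bracket, and then invoking the original Hom-Leibniz conformal identity of $(\L,[\cdot_\l\cdot],\a)$ and the Nijenhuis condition repeatedly to reorganize the surviving terms, all of which are built from $p,q,r$ using the plain bracket and at most two copies of $\N_{h}$. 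The upshot is that the first expression equals the sum of the other two, which is the required identity. I expect the only real obstacle to be the sheer bookkeeping of this threefold expansion — keeping track of the powers of $\N_{h}$, the spectral parameters, and the placement of $\a$, and pairing off the many resulting terms; there is no conceptual difficulty. (When $\N_{h}$ is invertible the computation is superfluous: the intertwining identity gives $[p_\l q]_{\N_{h}}=\N_{h}^{-1}[\N_{h}(p)_\l\N_{h}(q)]$, so $[\cdot_\l\cdot]_{\N_{h}}$ is the pullback of $[\cdot_\l\cdot]$ along $\N_{h}$ and inherits all its identities.)

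It remains to see that $\N_{h}$ is a Nijenhuis operator for $[\cdot_\l\cdot]_{\N_{h}}$, so that the deformed data indeed form a Hom-Nijenhuis-Leibniz conformal algebra. Apply the definition of $[\cdot_\l\cdot]_{\N_{h}}$ to the pair $(\N_{h}(p),\N_{h}(q))$ and use the intertwining identity with the arguments shifted by $\N_{h}$: one gets $[\N_{h}^{2}(p)_\l\N_{h}(q)]=\N_{h}[\N_{h}(p)_\l q]_{\N_{h}}$, $[\N_{h}(p)_\l\N_{h}^{2}(q)]=\N_{h}[p_\l\N_{h}(q)]_{\N_{h}}$ and $\N_{h}[\N_{h}(p)_\l\N_{h}(q)]=\N_{h}^{2}[p_\l q]_{\N_{h}}$, and feeding these into the expansion of $[\N_{h}(p)_\l\N_{h}(q)]_{\N_{h}}$ produces
$$[\N_{h}(p)_\l\N_{h}(q)]_{\N_{h}}=\N_{h}\big([\N_{h}(p)_\l q]_{\N_{h}}+[p_\l\N_{h}(q)]_{\N_{h}}-\N_{h}[p_\l q]_{\N_{h}}\big),$$
which together with $\a\circ\N_{h}=\N_{h}\circ\a$ is precisely the Nijenhuis condition for $[\cdot_\l\cdot]_{\N_{h}}$. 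Hence $(\L,[\cdot_\l\cdot]_{\N_{h}},\a,\N_{h})$ is a Hom-Nijenhuis-Leibniz conformal algebra, and $\N_{h}$ is the morphism described above, completing the proof.
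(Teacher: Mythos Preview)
Your proposal is correct and follows essentially the same approach as the paper: termwise verification of sesqui-linearity and multiplicativity, expansion of the three double brackets for the Hom-Leibniz conformal identity, and the same three applications of the intertwining identity $\N_{h}([x_\l y]_{\N_{h}})=[\N_{h}(x)_\l\N_{h}(y)]$ to establish that $\N_{h}$ is again Nijenhuis. The only cosmetic difference is that you isolate and name the intertwining identity up front (and add the helpful remark about the invertible case), whereas the paper dives directly into the expansions.
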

\begin{proof}There are three axioms to prove in this proposition, given as follows.
	\begin{enumerate}
		\item We show the identities of Hom-Leibniz conformal algebras one by one.
	\begin{itemize}
		 \item[(a)] Conformal sesqui-linearity: \begin{align*}
			[\p p_\l q]_{\N_h} &= [\N_{h}(\p p)_{\l}q]+ [\p p_{\l}\N_{h}(q)]- \N_{h}[\p p_{\l}q]\\& = [\p\N_{h}(p)_{\l}q]+ [\p p_{\l}\N_{h}(q)]- \N_{h}[\p p_{\l}q]\\&
			= -\l([\N_{h}(p)_{\l}q] +[ p_{\l}\N_{h}(q)]- \N_{h}[p_{\l}q]),
			\end{align*} similarly we can show $[p_\l \p q]_{\N_h}= (\p +\l )[p_\l q]_{\N_h}.$
				\item[(b)]Conformal multiplicativity: \begin{equation*}
			\begin{aligned}
			\a{[p _{\l}q]}_{\N}=&\a[\N_{h}(p)_{\l}q]+ \a[p_{\l}\N_{h}(q)]- \a\N_{h}([p_{\l}q])\\=&[\N_{h}(\a(p))_{\l} \a(q)]+ [\a(p)_{\l}\N_{h}(\a(q))]- \N_{h}([\a(p)_{\l}\a(q)])\\=& {[\a(p)_{\l}\a(q)]}_{\N}.
			\end{aligned}
			\end{equation*}\item[(c)] Hom-Leibniz conformal identity:
			\begin{align*}
			&[\a(p)_\l [q_\m r]_{\N_h}]_{\N_h} \\=& [\a(p)_\l ([\N_{h}(q)_{\l}r]+ [q_{\l}\N_{h}(r)]- \N_{h}[q_{\l}r])]_{\N_h} \\=& [\a(p)_\l [\N_{h}(q)_{\m}r]]_{\N_h}+ [\a(p)_\l [q_{\m}\N_{h}(r)]]_{\N_h}- [\a(p)_\l \N_{h}[q_{\m}r]]_{\N_h}\\=& [\N_{h}(\a(p))_\l [\N_{h}(q)_{\m}r]]+[\a(p)_\l \N_{h}([\N_{h}(q)_{\m}r])]-\N_{h}([\a(p)_\l [\N_{h}(q)_{\m}r]])\\&+[\N_{h} \a(p)_\l [q_{\m}\N_{h}(r)]]+[\a(p)_\l \N_{h}([q_{\m}\N_{h}(r)])]- \N_{h}([\a(p)_\l [q_{\m}\N_{h}(r)]]) \\&- [\N_{h} \a(p)_\l \N_{h}([q_{\m}r])]- [\a(p)_\l \N_{h}^2 ([q_{\m}r])]+\N_{h}([\a(p)_\l \N_{h}([q_{\m}r])])\\=& 
			[\N_{h}(\a(p))_\l [\N_{h}(q)_{\m}r]]+[\a(p)_\l ([\N_{h}(q)_{\m}\N_{h}(r)])]-\N_{h}([\a(p)_\l [\N_{h}(q)_{\m}r]])\\&+[\N_{h} \a(p)_\l [q_{\m}\N_{h}(r)]]- \N_{h}([\a(p)_\l [q_{\m}\N_{h}(r)]]) \\&- [\N_{h} \a(p)_\l \N_{h}([q_{\m}r])]+\N_{h}([\a(p)_\l \N_{h}([q_{\m}r])]).
			\end{align*}
			Similarly, we have 
			\begin{align*}
		&	[{[p_\l q]_{\N_h}}_{\l+\m}\a(r)]_{\N_h}\\=& [{[\N_{h}(p)_\l \N_{h}(q)]}_{\l+\m}\a(r)]
			+[{[p_\l \N_{h}(q)]_{\N_h}}_{\l+\m}\N_{h}(\a(r))]-\N_{h}[{[p_\l \N_{h}(q)]}_{\l+\m}\a(r)]\\&+[{[\N_{h}(p)_\l q]}_{\l+\m}\N_{h}(\a(r))]-\N_{h}[{[\N_{h}(p)_\l q]}_{\l+\m}\N_{h} \a(r)]\\&- [\N_{h}{[p_\l q]}_{\l+\m}\N_{h}(\a(r))]+ \N_{h}[\N_{h}{[p_\l q]}_{\l+\m}\a(r)],
			\end{align*}
			and \begin{align*}
			&[\a(q)_{\m}[p_\l r]_{\N_h}]_{\N_h}\\=&[\N_{h}(\a(q))_\m [\N_{h}(p)_{\l}r]]+[\a(q)_\m ([\N_{h}(p)_{\l}\N_{h}(r)])]-\N_{h}([\a(q)_\m [\N_{h}(p)_{\l}r]])\\&+[\N_{h} \a(q)_\m [p_{\l}\N_{h}(r)]]- \N_{h}([\a(q)_\m [p_{\l}\N_{h}(r)]]) \\&- [\N_{h} \a(q)_\m \N_{h}([p_{\l}r])]+\N_{h}([\a(q)_\m \N_{h}([p_{\l}r])]).
			\end{align*} From the above equations we conclude that Hom-Leibniz conformal identity holds.
		\end{itemize}Hence $(\L_{\N_h}, [\cdot_\l \cdot]_{\N_h},\a)$ is a Hom-Leibniz conformal algebra.
	\item For any $p,q\in \L$, we have\begin{align*}
	&[\N_{h}(p)_\l\N_{h}(q)]_{\N_h} \\=& [\N_{h}(\N_{h} (p))_\l \N_{h} (q)] + [\N_{h} (p)_\l \N_{h}(\N_{h} (q))] - \N_{h}([\N_{h} (p)_\l \N_{h} (q)]) \\
	=& \N_{h}([\N_{h}(\N_{h} (p))_\l q] + [\N_{h} (p)_\l \N_{h} (q)] - \N_{h}([\N_{h} (p)_\l q])) \\
	&+ \N_{h}([\N_{h} (p)_\l \N_{h} (q)] + [p_\l \N_{h}(\N_{h} (q))] - \N_{h}([p_\l \N_{h} (q)])) \\
	&- \N^2_h([\N_{h} (p)_\l q] + [p_\l \N_{h} (q)] - \N_{h}([p_\l q])) \\
	=& \N_{h} \left([\N_{h}(p)_\l q]_{\N_h} + [p_\l \N_{h} (q)]_{\N_h} - \N_{h}([p_\l q]_{\N_h}) \right).
	\end{align*} Hence, $ \N_{h}$ is also a Nijenhuis operator on the Hom-Leibniz conformal algebra $(\L_{\N_h}, [\cdot_\l \cdot ]_{\N_h})$.
	\item It is obvious to observe that the map $\N_{h} : (\L_{\N_h} , [ \cdot_\l \cdot ]_{\N_h},\a) \to (\L_{\N_h} , [\cdot_\l \cdot],\a)$, $p\mapsto \N_h(p)$ is a morphism of Hom-Nijenhuis Leibniz conformal algebra. \end{enumerate}It completes the proof.
	\end{proof} 
The following proposition describes the close relationship between the Nijenhuis operator and the Rota-Baxter operator. Similar results hold for Hom-associative conformal algebras (see prop 6.2 of \cite{AWW}). Let us observe, if these results hold for Hom-Leibniz conformal algebras or not.
\begin{prop}Let $\N_{h} :\L \to \L$ be a $\mathbb{C}[\p]$-module homomorphism over a Hom-Leibniz conformal algebra $\L$. Then
\begin{enumerate}
 \item  $\N_{h}$ is a Nijenhuis operator if and only if $\N_{h}$ is a Rota-Baxter-operator of weight $0$, given that $\N_{h}^2= 0$ .
		\item $\N_{h}$ is a Nijenhuis operator if and only if $\N_{h}$ is a Rota-Baxter-operator of weight $-1$,  given that $\N_{h}^2 = \N_{h}$.
	\item  $\N_{h}$ is a Nijenhuis operator if and only if $\N_{h}$ is a modified $ RB $-operator of weight $\mp 1,$ given that $\N_{h}^2= \pm Id$.
\item $\N_{h}$ is a Nijenhuis operator if and only if $\N_{h} \pm Id$ is a $RB$-operator of weight $\mp 2,$ given that $\N_{h}^2 = Id$.
	\end{enumerate}
\end{prop}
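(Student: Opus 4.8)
The plan is to verify each of the four equivalences by a direct substitution argument, exploiting the common algebraic shape of the Nijenhuis, Rota–Baxter, and modified Rota–Baxter identities. In every case the two sides of the candidate identities differ only by terms built from $\N_h^2$ applied inside or outside the bracket, so the stated constraint on $\N_h^2$ collapses those terms and turns one identity into the other. Concretely, I would start from the Nijenhuis defining equation
\begin{equation*}
[\N_h(p)_\l \N_h(q)] = \N_h\bigl([\N_h(p)_\l q] + [p_\l \N_h(q)] - \N_h[p_\l q]\bigr),
\end{equation*}
expand the right-hand side as $\N_h[\N_h(p)_\l q] + \N_h[p_\l \N_h(q)] - \N_h^2[p_\l q]$, and then substitute the relevant relation for $\N_h^2$. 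For part (1), $\N_h^2 = 0$ immediately kills the last term, giving the weight-$0$ Rota–Baxter identity $[\N_h(p)_\l\N_h(q)] = \N_h([\N_h(p)_\l q] + [p_\l\N_h(q)])$; conversely, starting from the weight-$0$ RB identity and adding $-\N_h^2[p_\l q] = 0$ recovers the Nijenhuis identity. For part (2), $\N_h^2 = \N_h$ turns $-\N_h^2[p_\l q]$ into $-\N_h[p_\l q]$, and since the RB-weight-$(-1)$ identity reads $[\N_h(p)_\l\N_h(q)] = \N_h([\N_h(p)_\l q] + [p_\l\N_h(q)] - [p_\l q])$, the two coincide; the converse is the same manipulation read backwards.

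For parts (3) and (4) the bookkeeping is slightly longer but structurally identical. In part (3), with $\N_h^2 = \pm\,Id$, the term $-\N_h^2[p_\l q]$ becomes $\mp[p_\l q]$, so the Nijenhuis identity becomes $[\N_h(p)_\l\N_h(q)] = \N_h([\N_h(p)_\l q] + [p_\l\N_h(q)]) \mp [p_\l q]$, which is exactly the modified Rota–Baxter identity of weight $\mp 1$ as in Definition \ref{modified Rota-Baxter}; reversing the step gives the other implication. For part (4), I would set $\R = \N_h \pm Id$ and expand $[\R(p)_\l\R(q)] = [(\N_h \pm Id)(p)_\l (\N_h \pm Id)(q)]$ using conformal bilinearity into $[\N_h(p)_\l\N_h(q)] \pm [\N_h(p)_\l q] \pm [p_\l\N_h(q)] + [p_\l q]$, then compute $\R([\R(p)_\l q] + [p_\l\R(q)] \mp 2[p_\l q])$ similarly, expanding each inner bracket and pushing $\N_h$ through, and finally use the Nijenhuis identity together with $\N_h^2 = Id$ (so $\N_h^2[p_\l q] = [p_\l q]$) to check that the two expansions agree; the weight $\mp 2$ is precisely what is needed to cancel the leftover $[p_\l q]$ terms. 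Again the argument is reversible, so the equivalence follows.

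One preliminary observation worth making explicit before the case analysis is that the $\mathbb C[\p]$-linearity and multiplicativity hypotheses ($\a\circ\N_h = \N_h\circ\a$) are inherited automatically: since $\N_h$, $\R$, $\N_h \pm Id$ all commute with $\a$ and with $\p$, the only nontrivial axiom to check in each direction is the multiplicative/operator identity, so I would dispatch the $\a$- and $\p$-compatibility in one line and concentrate on the bracket identities. I expect no genuine obstacle here: the whole proposition is a sequence of short algebraic identities, and the only mild care needed is in part (4), where one must expand a bracket of sums and correctly track the $\pm$ signs and the weight $\mp 2$ coefficient so that the residual $[p_\l q]$ contributions cancel. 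I would present parts (1)–(3) as brief one-line substitutions and give part (4) in two displayed computations (one for each side of the RB identity for $\R = \N_h \pm Id$), then note that every implication used only equalities, hence is reversible.
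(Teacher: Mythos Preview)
Your proposal is correct and follows essentially the same approach as the paper: direct substitution of the constraint on $\N_h^2$ into the expanded Nijenhuis identity to obtain the corresponding (modified) Rota--Baxter identity, with part (4) handled by expanding $[\R(p)_\l \R(q)]$ and the right-hand side of the RB identity for $\R = \N_h \pm Id$. If anything, you are slightly more careful than the paper in explicitly noting the reversibility of each step (the paper only writes out one direction in each case) and in dispatching the $\a$- and $\p$-compatibility up front.
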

\begin{proof}
\begin{enumerate}
	\item  Suppose $\N_{h}$ is a Nijenhuis operator and let $\N_{h}^2 = 0$. Then for any $p,q \in \L$, we have
\begin{align*}
	[\N_{h}(p)_\l \N_{h}(q)]& = \N_{h}([\N_{h}(p)_\l q] + [p_\l \N_{h}(q)] - \N_{h}[p_\l q])\\&
	= \N_{h}([\N_{h}(p)_\l q] + [p_\l \N_{h}(q)]) - \N_{h}^2[p_\l q]\\&
	= \N_{h}([\N_{h}(p)_\l q] + [p_\l \N_{h}(q)]).
	\end{align*}
		Hence, $\N_{h}$ is a Rota-Baxter operator of weight $0$.
	\item If $\N_{h}^2=\N_{h}$, then we get 
    \begin{align*}
	[\N_{h}(p)_\l \N_{h}(q)]= \N_{h}([\N_{h}(p)_\l q] + [p_\l \N_{h}(q)]- [p_\l q]).
	\end{align*} Here $\N_{h}$ is nothing but a Rota-Baxter operator of weight $-1$.
	\item If $\N_{h}^2=\pm Id$, then we get 
	\begin{align*}
	[\N_{h}(p)_\l \N_{h}(q)]= \N_{h}([\N_{h}(p), q] + [p, \N_{h}(q)])\mp [p_\l q].
	\end{align*}which is modified Rota-Baxter operator of weight $\mp 1$. 
	\item Consider that $\N_{h}+ Id$ is a Rota-Baxter-operator of weight $-2$, we have\begin{equation*}\begin{aligned}&[(\N_{h}+ Id)(p)_{\l}(\N_{h}+ Id)(q)]- (\N_{h}+ Id)([(\N_{h}+ Id)(p)_\l q ]+ [p_\l (\N_{h}+ Id)(q)]-2([p_\l q]))\\=&
	[\N_{h}(p)_{\l}(\N_{h}+ Id)(q)]+[p_{\l}(\N_{h}+ Id)(q)]\\&- \N_{h}([(\N_{h}+ Id)(p)_\l q] + [p_\l (\N_{h}+ Id)(q)]- 2([p_\l q]))\\&- ([(\N_{h}+ Id)(p)_\l q] + [p_\l (\N_{h}+ Id)(q)] -2([p_\l q]))\\=& \{[\N_{h}(p)_{\l}\N_{h}(q)]+ [\N_{h}(p)_{\l}q]+ [p_{\l}\N_{h}(q)]+ [p_{\l}q]\}\\&- \N_{h}(\{[\N_{h}(p)_\l q]+ [p_{\l}q]+ [p_\l \N_{h}(q)] +[p_{\l}q]-2([p_\l q])\})\\&- (\{[\N_{h}(p)_\l q]+[p_{\l}q] +[ p_\l \N_{h}(q)]+[p_{\l}q]\} -2([p_\l q])))\\=&[\N_{h}(p)_{\l}\N_{h}(q)]- \N_{h}([\N_{h}(p)_\l q]+[ p_\l \N_{h}(q)])+([p_\l q])\\=&[ \N_{h}(p)_{\l}\N_{h}(q)]- \N_{h}([\N_{h}(p)_\l q]+ [p_\l \N_{h}(q)]) +\N_{h}^2([p_\l q])\\=&[ \N_{h}(p)_{\l}\N_{h}(q)]- \N_{h}([\N_{h}(p)_\l q]+ [p_\l \N_{h}(q)]-\N_{h}([p_\l q])).\end{aligned}\end{equation*}This implies that $\N_{h}$ is a Nijenhuis operator with $\N_{h}^2= Id$. Similarly, we can prove other case by considering $\N_{h}-Id$ is the Rota-Baxter-operator of weight $+2$.
\end{enumerate} This completes the proof.\end{proof}
\begin{defn}\label{defrep}
 A representation of a Hom-Leibniz conformal algebra $(\L,[-_\l-],\a)$ consists of a $\mathbb{C}[\p]$-module $\M$ equipped with two $\mathbb{C}$-bilinear maps $l:\L\otimes \M\to \M[\l]$ and $r:\M\otimes\L\to \M[\l]$, defined by $l(p,m)\mapsto l(p)_\l m$ and $r(m,p)\mapsto r(m)_\l p$ respectively and a $\mathbb{C}$-linear map $\b:\M \to \M$, such that following equations hold for all $p,q\in \L$ and $m\in \M$
 \begin{align}
 l(\p p)_\l m = -\l l(p)_\l m,~& l(p)_\l(\p m) = (\p+\l)l(p)_\l m,\\
 r(\p m)_\l p = -\l r(m)_\l p, ~&r (m)_\l(\p p) = (\p+\l)r(m)_\l p\\
 r(\b(m)) _\l ([p_\m q])&= r(r(m)_\l p)_{\l+\m} \a(q) + l(\a(p))_\m (r(m)_\l q),\label{eq3}\\
 r(\b(m))_\l([p_\m q])&= -r(l(p)_\m m)_{\l+\m} \a(q)+ l(\a(p))_\m (r(m)_{\l} q)\label{eq4} ,\\
 l(\a(p))_\l (l(q)_\m m) &= l([p_\l q])_{\l+\m} \b(m) + l (\a(q))_\m (l(p)_\l m).\\
 \b(l(p)_\l m) &= l(\a(p))_\l \b(m),\\
 \b(r(p)_\l m) &= r(\a(p))_\l \b(m).
 \end{align}
\end{defn}
\begin{defn}\label{defrepNIJ}
	A representation of a Hom-Nijenhuis-Leibniz conformal algebra $(\L, [\cdot_\l \cdot],\a,\N_{h})$ is quardruple $(\M, r, l, \b, \N_{\M})$, where $(\M, r, l, \b)$ is a representation of Hom-Leibniz conformal algebra and $\N_{\M}:\M\to \M$ is a $\mathbb{C}$-linear map satisfying the following equations :\begin{align*}
		\b\circ \N_{\M} &=\N_{\M}\circ \b,\\
	l(\N_{h}(p))_{\l} \N_{\M}(m)&= \N_{\M}(l(\N_{h}(p))_{\l}m+ l(p)_{\l}\N_{\M}(m)- \N_{\M} \circ l(p)_{\l}m), \quad \forall~ p\in \L ~and ~m\in\M\\
		r(\N_{\M}(m))_{\l} \N_{h}(p)&= \N_{\M}(r(\N_{\M}(m))_{\l}p+ r(m)_{\l}\N_{h}(p)- \N_{\M} \circ r(m)_{\l}p), \quad \forall~ p\in \L ~and ~m\in\M.
	\end{align*}
\end{defn}
\begin{prop}\label{prop2.12}
	Let $\L_{\N_h}=(\L, [\cdot_\l\cdot],\a,\N_{h})$ be a Hom-Nijenhuis-Leibniz conformal algebra with the representation $(\M, l , r, \b, \N_{\M} ).$ We define $l'$ and $ r'$ respectively by 
	\begin{align*}
	 l'(p)_\l m&=l(\N_{h}(p))_{\l}m+ l(p)_{\l}\N_{\M}(m)- \N_{\M} \circ l(p)_{\l}m, \\ 
	r'(m)_\l p&= r(\N_{\M}(m))_{\l}p+ r(m)_{\l}\N_{h}(p)- \N_{\M} \circ r(m)_{\l}p.
	\end{align*}Then $(\M, l' , r', \N_{\M},\b )$ is a representation of $(\L_{\N_h}, [\cdot_\l \cdot]_{\N_h}, \a)$.
\end{prop}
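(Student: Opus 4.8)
My plan is to deduce Proposition \ref{prop2.12} from Proposition \ref{prop2.9} by passing to a semidirect product, rather than checking the representation axioms of Definition \ref{defrep} one at a time. First I would recall the standard fact that a representation $(\M, l, r, \b)$ of a Hom-Leibniz conformal algebra $(\L, [\cdot_\l\cdot], \a)$ is equivalent to the structure of a Hom-Leibniz conformal algebra on the $\mathbb{C}[\p]$-module $\L\oplus\M$, with twist $\a\oplus\b$ and $\l$-bracket $[(p,m)_\l(q,n)] = ([p_\l q],\, l(p)_\l n + r(m)_\l q)$, in which $\M$ is an abelian ideal: letting $0$, $1$ or $2$ of the three arguments of the Hom-Leibniz conformal identity lie in $\M$ reproduces, respectively, the Hom-Leibniz conformal identity of $\L$, the three identities \eqref{eq3}, \eqref{eq4} and the $l$-identity of Definition \ref{defrep}, and the trivial identity, while conformal sesqui-linearity and multiplicativity of $\a\oplus\b$ recover the remaining axioms. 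Write $\L\ltimes\M$ for this algebra.

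Next I would check that $\N_{h}\oplus\N_{\M}\colon \L\oplus\M\to\L\oplus\M$ is a Nijenhuis operator on $\L\ltimes\M$ in the sense of Definition \ref{Nijenhuis}. Its $\mathbb{C}[\p]$-linearity and its commutation with $\a\oplus\b$ are component-wise obvious, and the Nijenhuis identity for $\N_{h}\oplus\N_{\M}$, evaluated on arguments taken from the two summands, splits into the Nijenhuis identity for $\N_{h}$ (both arguments in $\L$), the two $\N_{\M}$-compatibility equations of Definition \ref{defrepNIJ} (one argument in $\L$, one in $\M$), and $0=0$ (both arguments in $\M$). Granting this, Proposition \ref{prop2.9} applied to $\L\ltimes\M$ with the Nijenhuis operator $\N_{h}\oplus\N_{\M}$ shows that the deformed bracket $[\cdot_\l\cdot]_{\N_{h}\oplus\N_{\M}}$, together with the twist $\a\oplus\b$, is again a Hom-Leibniz conformal algebra.

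Finally I would read off this deformed bracket on the homogeneous pieces: on $\L\times\L$ it is $[p_\l q]_{\N_{h}}$; on $\L\times\M$ it is $l(\N_{h}(p))_\l m + l(p)_\l\N_{\M}(m) - \N_{\M}(l(p)_\l m) = l'(p)_\l m$; on $\M\times\L$ it is $r(\N_{\M}(m))_\l p + r(m)_\l\N_{h}(p) - \N_{\M}(r(m)_\l p) = r'(m)_\l p$; and it vanishes on $\M\times\M$. Thus the deformed semidirect product is exactly $\L_{\N_h}\ltimes(\M, l', r', \b)$, and by the equivalence of the first paragraph this is precisely the assertion that $(\M, l', r', \b, \N_{\M})$ is a representation of $(\L_{\N_h}, [\cdot_\l\cdot]_{\N_h}, \a)$. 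I expect the main obstacle to be the ``splitting'' claim of the second paragraph, namely verifying that restricting the Nijenhuis identity for $\N_{h}\oplus\N_{\M}$ to homogeneous arguments yields nothing beyond Definitions \ref{Nijenhuis} and \ref{defrepNIJ}. A reader preferring a direct computation can instead expand $l'(\a(p))_\l(l'(q)_\m m)$ and its $r'$-counterparts and reorganize the resulting nine summands using \eqref{eq3}, \eqref{eq4}, the $l$-identity, and the $\N_{\M}$-compatibility equations, exactly as the nine summands were reorganized in the proof of Proposition \ref{prop2.9}(1)(c); in that approach the obstacle is purely bookkeeping, namely tracking how the $\N_{\M}$-terms and the $\N_{h}$-terms recombine so that the correction $-\N_{h}[p_\l q]$ of the deformed bracket emerges.
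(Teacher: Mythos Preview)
Your proposal is correct and takes a genuinely different route from the paper. The paper proceeds by brute force: it expands $l'(\a(p))_\l(l'(q)_\m m) - l'([p_\l q]_{\N_h})_{\l+\m}\b(m) - l'(\a(q))_\m(l'(p)_\l m)$ into twenty-seven terms and cancels them in blocks using Definition \ref{defrep} and Definition \ref{defrepNIJ}, then asserts that the two $r'$-identities are similar. Your semidirect-product argument replaces all of this by a single invocation of Proposition \ref{prop2.9} on $\L\ltimes\M$ with Nijenhuis operator $\N_h\oplus\N_{\M}$; the bilinearity of the Nijenhuis identity in its two arguments makes your ``splitting'' claim immediate, so the obstacle you flag is not a real one. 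What your approach buys is a structural explanation and the elimination of pages of bookkeeping; what the paper's approach buys is self-containment, since the equivalence between representations and abelian-ideal semidirect products for Hom-Leibniz conformal algebras is not stated elsewhere in the paper (though it is routine). One small addendum: the paper's proof goes on to verify that $\N_{\M}$ remains compatible with $l',r'$ in the sense of Definition \ref{defrepNIJ}, i.e.\ that $(\M,l',r',\b,\N_{\M})$ is a representation of the Hom-\emph{Nijenhuis}-Leibniz conformal algebra $(\L_{\N_h},[\cdot_\l\cdot]_{\N_h},\a,\N_h)$. In your framework this extra step is free: part (2) of Proposition \ref{prop2.9} says $\N_h\oplus\N_{\M}$ is again Nijenhuis on the deformed semidirect product, and the same splitting recovers the required compatibility.
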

\begin{proof}Here $ l' $ and $ r' $ need to satisfy the conditions given in Definition \eqref{defrep}: 
	\begin{enumerate}
	 \item \begin{align*}
			l'(\p p)_\l m&=l(\N_{h}(\p p))_{\l}m+ l(\p p)_{\l}\N_{\M}(m)- \N_{\M} \circ l(\p p)_{\l}m\\ &=
			l(\p \N_{h}(p))_{\l}m+ l(\p p)_{\l}\N_{\M}(m)- \N_{\M} \circ l(\p p)_{\l}m\\ &= 
			-\l l(\N_{h}(p))_{\l}m-\l l(p)_{\l}\N_{\M}(m)+ \l \N_{\M}\circ l(p)_{\l}m\\ &= -\l (l(\N_{h}(p))_{\l}m+l(p)_{\l}\N_{\M}(m)- \N_{\M}\circ l(p)_{\l}m)\\ &= -\l (l'(p)_\l m).
	\end{align*}Similarly, we can show that $l'(p)_\l(\p m) = (\p+\l)l'(p)_\l m.$
 \item \begin{align*}
		 r' (m)_\l(\p p) &= r(\N_{\M}(m))_{\l}(\p p)+ r(m)_{\l}\N_{h}(\p p)- \N_{\M} \circ r(m)_{\l}(\p p) \\=&(\p+\l)r(\N_{\M}(m))_{\l}(p)+ (\p+\l)r(m)_{\l}\N_{h}(p)-(\p+\l) \N_{\M} \circ r(m)_{\l}(p) \\=&(\p+\l)(r(\N_{\M}(m))_{\l}(p)+ r(m)_{\l}\N_{h}(p)- \N_{\M} \circ r(m)_{\l}(p) )\\=&(\p+\l)r'(m)_\l p.  \end{align*}
		 \item  \begin{align*}
		 &l'(\a(p))_\l (l'(q)_\m m)- l'([p_\l q]_{\N_h})_{\l+\m} \b(m) - l' (\a(q))_\m (l'(p)_\l m) \\=& l(\N_{h}(\a(p)))_{\l}(l'(q)_\m m)+ l(\a(p))_{\l}\N_{\M}(l'(q)_\m m)- \N_{\M} \circ l(\a(p))_{\l}(l'(q)_\m m)\\&- l(\N_{h}([p_\l q])_\N)_{\l+\m} \b(m)- l([p_\l q]_{\N_h})_{\l+\m}\N_{\M}(\b(m))+ \N_{\M} \circ l([p_\l q]_{\N_h})_{\l+\m}\b(m)
 \\&- l(\N_{h}(\a(q)))_{\m}(l'(p)_\m m)+ l(\a(q))_{\l}\N_{\M}(l'(p)_\l m)- \N_{\M} \circ l(\a(q))_{\m}(l'(p)_\l m)\\=&l(\N_{h}(\a(p)))_{\l}(l(\N_{h}(q))_{\m}m+ l(q)_{\m}\N_{\M}(m)- \N_{\M} \circ l(q)_{\m}m)\\&
 + l(\a(p))_{\l}\N_{\M}(l(\N_{h}(q))_{\m}m+ l(q)_{\m}\N_{\M}(m)- \N_{\M} \circ l(q)_{\m}m)\\&
 - \N_{\M} \circ l(\a(p))_{\l}(l(\N_{h}(q))_{\m}m+ l(q)_{\m}\N_{\M}(m)- \N_{\M} \circ l(q)_{\m}m) \\&
 - l(([\N_{h} (p)_\l \N_{h} (q)]))_{\l+\m} \b(m)\\&
 - l([\N_{h}(p)_{\l}q]+ [p_{\l}\N_{h}(q)]- \N_{h}[p_{\l}q])_{\l+\m}\N_{\M}(\b(m))\\&
 + \N_{\M} \circ l([\N_{h}(p)_{\l}q]+ [p_{\l}\N_{h}(q)]- \N_{h}[p_{\l}q])_{\l+\m}\b(m)
 \\&- l(\N_{h}(\a(q)))_{\m}(l(\N_{h}(p))_{\l}m+ l(p)_{\l}\N_{\M}(m)- \N_{\M} \circ l(p)_{\l}m)\\&+ l(\a(q))_{\l}\N_{\M}(l(\N_{h}(p))_{\l}m+ l(p)_{\l}\N_{\M}(m)- \N_{\M} \circ l(p)_{\l}m)\\&- \N_{\M} \circ l(\a(q))_{\m}(l(\N_{h}(p))_{\l}m+ l(p)_{\l}\N_{\M}(m)- \N_{\M} \circ l(p)_{\l}m)
 \\=& l(\N_{h}(\a(p)))_{\l}(l(\N_{h}(q))_{\m}m)+ l(\N_{h}(\a(p)))_{\l}(l(q)_{\m}\N_{\M}(m))- l(\N_{h}(\a(p)))_{\l}(\N_{\M} \circ l(q)_{\m}m)\\&
 + l(\a(p))_{\l}\N_{\M}(l(\N_{h}(q))_{\m}m)+ l(\a(p))_{\l}\N_{\M}(l(q)_{\m}\N_{\M}(m))-l(\a(p))_{\l}\N_{\M}(\N_{\M} \circ l(q)_{\m}m)\\&
 - \N_{\M} \circ l(\a(p))_{\l}(l(\N_{h}(q))_{\m}m)-\N_{\M} \circ l(\a(p))_{\l}(l(q)_{\m}\N_{\M}(m))+\N_{\M} \circ l(\a(p))_{\l}(\N_{\M} \circ l(q)_{\m}m) \\&
 - l(([\N_{h} (p)_\l \N_{h} (q)]))_{\l+\m} \b(m)\\&
 - l([\N_{h}(p)_{\l}q])_{\l+\m}\N_{\M}(\b(m))- ([p_{\l}\N_{h}(q)])_{\l+\m}\N_{\M}(\b(m))- (\N_{h}[p_{\l}q])_{\l+\m}\N_{\M}(\b(m))\\&
 + \N_{\M} \circ l([\N_{h}(p)_{\l}q])_{\l+\m}\b(m)+ \N_{\M} \circ l([p_{\l}\N_{h}(q)])_{\l+\m}\b(m)- \N_{\M} \circ l(\N_{h}[p_{\l}q])_{\l+\m}\b(m)
 \\&- l(\N_{h}(\a(q)))_{\m}(l(\N_{h}(p))_{\l}m)-l(\N_{h}(\a(q)))_{\m}(l(p)_{\l}\N_{\M}(m))+l(\N_{h}(\a(q)))_{\m}(\N_{\M} \circ l(p)_{\l}m)\\&+ l(\a(q))_{\l}\N_{\M}(l(\N_{h}(p))_{\l}m)+ l(\a(q))_{\l}\N_{\M}(l(p)_{\l}\N_{\M}(m))- l(\a(q))_{\l}\N_{\M}(\N_{\M} \circ l(p)_{\l}m)\\&- \N_{\M} \circ l(\a(q))_{\m}(l(\N_{h}(p))_{\l}m)-\N_{\M} \circ l(\a(q))_{\m}(l(p)_{\l}\N_{\M}(m))+\N_{\M} \circ l(\a(q))_{\m}(\N_{\M} \circ l(p)_{\l}m) 
 \\=& l(\N_{h}(\a(p)))_{\l}(l(\N_{h}(q))_{\m}m) 
- l([\N_{h} (p)_\l \N_{h} (q)])_{\l+\m} \b(m)
- l(\N_{h}(\a(q)))_{\m}(l(\N_{h}(p))_{\l}m)\\&
-\N_{\M}\circ l(\N_{h}(\a(p)))_{\l}(l(q)_{\m}m)
- \N_{\M}\circ l(\a(p))_{\l}(\N_{\M} \circ l(q)_{\m}m)
+ \N_{\M}\circ \N_{\M}\circ l(\a(p))_{\l}(l(q)_{\m}m)
\\&+ l(\N_{h}(\a(p)))_{\l}(l(q)_{\m}\N_{\M}(m))\\&
+ l(\a(p))_{\l}\N_{\M}(l(\N_{h}(q))_{\m}m)
+l(\a(p))_{\l}\N_{\M}(l(q)_{\m}\N_{\M}(m))
-l(\a(p))_{\l}\N_{\M}(\N_{\M} \circ l(q)_{\m}m)\\&
- \N_{\M} \circ l(\a(p))_{\l}(l(\N_{h}(q))_{\m}m)
- \N_{\M} \circ l(\a(p))_{\l}(l(q)_{\m}\N_{\M}(m))
+ \N_{\M} \circ l(\a(p))_{\l}(\N_{\M} \circ l(q)_{\m}m)\\&
+ \N_{\M} \circ l([\N_{h}(p)_{\l}q])_{\l+\m}\b(m)
+ \N_{\M} \circ l([p_{\l}\N_{h}(q)])_{\l+\m}\b(m)
 - \N_{\M} \circ l(\N_{h}[p_{\l}q])_{\l+\m}\b(m)
\\&- l([\N_{h}(p)_{\l}q])_{\l+\m}\N_{\M}(\b(m))
-l([p_{\l}\N_{h}(q)])_{\l+\m}\N_{\M}(\b(m))\\&
+\N_{\M}\circ l([p_{\l}q])_{\l+\m}\N_{\M}(\b(m))
+\N_{\M} \circ l(\N_{h}[p_{\l}q])_{\l+\m}(\b(m))
- \N_{\M}\circ \N_{\M}\circ l([p_{\l}q])_{\l+\m}(\b(m))\\&
+\N_{\M}\circ l(\N_{h}(\a(q)))_{\m}(l(p)_{\l}m)
+\N_{\M}\circ l((\a(q)))_{\m}(\N_{\M} \circ l(p)_{\l}m)
-\N_{\M}\circ l((\a(q)))_{\m}(l(p)_{\l}m)\\&
-l(\N_{h}(\a(q)))_{\m}(l(p)_{\l}\N_{\M}(m))\\&
- l(\a(q))_{\m}\N_{\M}(l(\N_{h}(p))_{\l}m)
+ l(\a(q))_{\m}\N_{\M}(l(p)_{\l}\N_{\M}(m))
- l(\a(q))_{\m}\N_{\M}(\N_{\M} \circ l(p)_{\l}m)\\&
+ \N_{\M} \circ l(\a(q))_{\m}(l(\N_{h}(p))_{\l}m)
-\N_{\M} \circ l(\a(q))_{\m}(l(p)_{\l}\N_{\M}(m))
+\N_{\M} \circ l(\a(q))_{\m}(\N_{\M} \circ l(p)_{\l}m)
\\=&-\N_{\M}\circ l(\N_{h}(\a(p)))_{\l}(l(q)_{\m}m)+ \N_{\M} \circ l([\N_{h}(p)_{\l}q])_{\l+\m}\b(m)+ \N_{\M} \circ l(\a(q))_{\m}(l(\N_{h}(p))_{\l}m)\\&
- \N_{\M}\circ l(\a(p))_{\l}(\N_{\M} \circ l(q)_{\m}m)
+ \N_{\M} \circ l(\a(p))_{\l}(\N_{\M} \circ l(q)_{\m}m)\\&
+ \N_{\M}\circ \N_{\M}\circ l(\a(p))_{\l}(l(q)_{\m}m)
- \N_{\M}\circ \N_{\M}\circ l([p_{\l}q])_{\l+\m}(\b(m))-\N_{\M}\circ\N_{\M}\circ l((\a(q)))_{\m}(l(p)_{\l}m)\\&
 - \N_{\M} \circ l(\N_{h}[p_{\l}q])_{\l+\m}\b(m)
 +\N_{\M} \circ l(\N_{h}[p_{\l}q])_{\l+\m}(\b(m))\\&
+\N_{\M} \circ l(\a(q))_{\m}(\N_{\M} \circ l(p)_{\l}m)
 +\N_{\M} \circ l((\a(q)))_{\m}(\N_{\M} \circ l(p)_{\l}m)
\\&+ l(\N_{h}(\a(p)))_{\l}(l(q)_{\m}\N_{\M}(m))-l([\N_{h}(p)_{\l}q])_{\l+\m}\N_{\M}(\b(m))\\&
- l(\a(q))_{\m}\N_{\M}(l(\N_{h}(p))_{\l}m)
+ l(\a(q))_{\m}\N_{\M}(l(p)_{\l}\N_{\M}(m))
- l(\a(q))_{\m}\N_{\M}(\N_{\M} \circ l(p)_{\l}m)\\&
 +\N_{\M} \circ l(\a(q))_{\m}(l(p)_{\l}\N_{\M}(m))
 - \N_{\M} \circ l(\a(p))_{\l}(l(q)_{\m}\N_{\M}(m))
 +\N_{\M}\circ l([p_{\l}q])_{\l+\m}\N_{\M}(\b(m))\\&
 + \N_{\M} \circ l([p_{\l}\N_{h}(q)])_{\l+\m}\b(m)
 - \N_{\M} \circ l(\a(p))_{\l}(l(\N_{h}(q))_{\m}m)
 +\N_{\M}\circ l(\N_{h}(\a(q)))_{\m}(l(p)_{\l}m)\\&
-l([p_{\l}\N_{h}(q)])_{\l+\m}\N_{\M}(\b(m))-l(\N_{h}(\a(q)))_{\m}(l(p)_{\l}\N_{\M}(m))\\&
+ l(\a(p))_{\l}\N_{\M}(l(\N_{h}(q))_{\m}m)
+l(\a(p))_{\l}\N_{\M}(l(q)_{\m}\N_{\M}(m))
-l(\a(p))_{\l}\N_{\M}(\N_{\M} \circ l(q)_{\m}m)\\=&0.
\end{align*}
\end{enumerate}Similarly, we can show that \begin{align*}
 r'(\b(m)) _\l ([p_\m q])&= r'(r'(m)_\l p)_{\l+\m} \a(q) + l'(\a(p))_\m (r'(m)_\l q),\\
r'(\b(m))_\l([p_\m q])&= -r'(l'(p)_\m m)_{\l+\m} \a(q)+ l'(\a(p))_\m (r'(m)_{\l} q). 
 \end{align*} 	
Hence, $(\M, l' , r',\b )$ is a representation of Hom-Leibniz conformal algebra $(\L_{\N_h}, [\cdot_\l \cdot]_{\N_h}, \a)$. Next, we see that 
\begin{align*}
 &l'(\N_{h} (p))_\l \N_{\M}(m)\\=&l(\N_{h}\N_{h}(p))_{\l} \N_{\M}(m)+ l(\N_{h}(p))_{\l}\N_{\M}\N_{\M}(m)- \N_{\M} \circ l(\N_{h}(p))_{\l}\N_{\M}(m)\\=& \N_{\M}(l(\N_{h}\N_{h}(p))_{\l}m+ l(\N_{h}(p))_{\l}\N_{\M}(m)- \N_{\M} \circ l(\N_{h}(p))_{\l}m) \\&+ \N_{\M}(l(\N_{h}(p))_{\l}\N_{\M}(m)+ l(p)_{\l}\N_{\M}\N_{\M}(m)- \N_{\M} \circ l(p)_{\l}\N_{\M}(m))\\&
 - \N_{\M} \circ \N_{\M}(l(\N_{h}(p))_{\l}m+ l(p)_{\l}\N_{\M}(m)- \N_{\M} \circ l(p)_{\l}m)\\
 =&\N_{\M} \circ (l(\N_{h}\N_{h}(p))_{\l}m+ l(\N_{h}(p))_{\l}\N_{\M}(m)- \N_{\M} \circ l(\N_{h}(p))_{\l}m) \\&
 +\N_{\M} \circ (l(\N_{h}(p))_{\l}\N_{\M}(m)+ l(p)_{\l}\N_{\M}\N_{\M}(m)- \N_{\M} \circ l(p)_{\l}\N_{\M}(m))\\& 
 -\N_{\M}\circ\N_{\M} \circ (l(\N_{h}(p))_{\l}m+ l(p)_{\l}\N_{\M}(m)- \N_{\M} \circ l(p)_{\l}m)
 \\=&\N_{\M} \circ l'(\N_{h} (p))_\l (m) +\N_{\M} \circ l'( p)_\l \N_{\M}(m) -\N_{\M}\circ\N_{\M} \circ l'(p)_\l (m)\\=&\N_{\M} \circ (l'(\N_{h} (p))_\l (m) +l'( p)_\l \N_{\M}(m) -\N_{\M} \circ l'(p)_\l (m)).
 \end{align*} Similarly, we can show that \begin{align*}
 r'(\N_{\M}(m))_{\l} \N_{h}(p)&= \N_{\M}(r'(\N_{\M}(m))_{\l}p+ r'(m)_{\l}\N_{h}(p)- \N_{\M} \circ r'(m)_{\l}p).
 \end{align*}Thus, $(\M, l' , r',\b ,\N_{\M})$ is a representation of Hom-Nijenhuis Leibniz conformal algebra $(\L_{\N_h}, [\cdot_\l \cdot]_{\N_h}, \a)$.
\end{proof}
\section{ Cohomology of Hom-Nijenhuis-Leibniz conformal algebra} To define the cohomology of a Hom-Nijenhuis-Leibniz conformal algebra, we first recall the cohomology of a Hom-Leibniz conformal algebra. It has not been defined previously, we will use the technique of the paper \cite{AW}. 
Let us consider that $(\L,[\cdot_\l \cdot],\a)$ is a Hom-Leibniz conformal algebra, which has coefficients in the representation $(l,r,\M,\b)$. The group of $nth$-cochain $C^{n}_{HomL}(\L,M)$ contains the map $f_{\l_1,\cdots,\l_{n-1}}: \wedge^{\otimes n} \L\to \mathbb{C}[\l_1,\cdots,\l_{n-1}] \otimes \M$ defined by $p_1\otimes p_2\otimes\cdots \otimes p_n \mapsto f_{\l_1,\l_2,\cdots,\l_{n-1}}(p_1,p_2,\cdots,p_n).$ These maps satisfy the following conformal sesqui-linearity condition:
	\begin{equation}\label{eq9}
	\begin{aligned}
	f_{\l_1, \l_2,\cdots, \l_{n-1}}&(p_1,p_2,\cdots,\p(p_i),\cdots,p_n)\\=&\begin{cases}-\l_{i} f_{\l_1,\l_2,\cdots,\l_{n-1}}(p_1, p_2, \cdots , p_n), &i= 1, \cdots, n-1, \\
	 (\p+\l_1+\l_2+\cdots+\l_{n-1}) f_{\l_1,\l_2,\cdots,\l_{n-1}}(p_1, p_2, \cdots , p_n),&i=n. \end{cases}
	\end{aligned}\end{equation} such that $f\circ \a^{\otimes n}= \b\circ f$. The coboundary map $\delta_{Hom}: C^{n}_{Hom}(\L, \M)\to C^{n+1}_{Hom}(\L, \M)$ can be defined by 
\begin{equation}\begin{aligned}\label{coboundarymap}&(\delta_{HomL} f)_{\l_1,\cdots,\l_{n}}(p_1,\cdots, p_{n+1})\\=& \sum_{i=1}^{n}(-1)^{i+1}l(\a^{n-1}(p_i))_{\l_i}f_{\l_1, \cdots, \hat{\l_{i}}, \cdots, \l_{n}}(p_1, \cdots, \hat{p_i}, \cdots, p_{n+1})\\&
+ (-1)^{n+1}r(f_{\l_1, \cdots, \l_{n-1}}(p_1, \cdots, p_{n}))_{\l_1+ \cdots+ \l_n}\a^{n-1}(p_{n+1})\\&
+ \sum_{i<j}(-1)^{i+ j} f_{\l_i+ \l_j, \l_1, \cdots, \hat{\l_{i}}, \cdots, \hat{\l_{j}}, \cdots, \l_{n}}([{p_{i}}_{\l_i} p_j], \a(p_1), \cdots, \hat{\a(p_i)}, \cdots, \hat{\a(p_j)}, \cdots,\a(p_{n+1})), \end{aligned}\end{equation}
 \noindent where $f\in C^{n}_{HomL}(\L, \M)$ and $p_1, p_2, \cdots, p_{n},p_{n+1} \in \L$. With coefficients in the representation $(l,r,\M,\b)$, the corresponding cohomology groups are known as the cohomology of the Hom-Leibniz conformal algebra $(\L,[\cdot_\l\cdot],\a)$.
 If we assume that $l(p)_\l m=[p_\l m]$ and $r(m)_\l p=[m_\l p]$ then above equations turns to be \begin{equation}\begin{aligned}&(\delta_{HomL} f)_{\l_1,\cdots,\l_{n}}(p_1,\cdots, p_{n+1})\\=& \sum_{i=1}^{n}(-1)^{i+1}[{\a^{n-1}(p_i)}_{\l_i}f_{\l_1,\cdots,\hat{\l_{i}},\cdots, \l_{n}}(p_1,\cdots,\hat{p_i},\cdots, p_{n+1})]\\&
+ (-1)^{n+1}[{f_{\l_1,\cdots, \l_{n-1}}(p_1,\cdots, p_{n})}_{\l_1+\cdots+\l_n} \a^{n-1}(p_{n+1})]\\&
+ \sum_{i<j}(-1)^{i+j} f_{\l_i+ \l_j ,\l_1,\cdots, \hat{\l_{i}},\cdots, \hat{\l_{j}},\cdots, \l_{n}}([{p_{i}}_{\l_i} p_j], \a(p_1), \cdots, \hat{\a(p_i)},\cdots, \hat{\a(p_j)},\cdots,\a(p_{n+1})), \end{aligned}\end{equation}
\begin{thm}
	$\delta_{HomL}^{2}\circ\delta_{HomL}^{1} = 0$.
\end{thm}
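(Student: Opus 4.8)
The plan is to verify $\delta_{HomL}^2 = 0$ by direct computation on the low-degree cochain groups, exploiting the same template that was used in \cite{AW} for Hom-Lie conformal algebras but adapted to the non-skew-symmetric Leibniz setting. Concretely, I would take $f \in C^1_{HomL}(\L,\M)$ (so $f_\lambda \colon \L \to \mathbb{C}[\lambda]\otimes\M$ with $f\circ\a = \b\circ f$ and the appropriate sesqui-linearity), apply formula \eqref{coboundarymap} with $n=1$ to get $(\delta^1_{HomL} f)_{\lambda_1}(p_1,p_2) = l(p_1)_{\lambda_1} f_{\lambda_1}(p_2) + r(f_{\lambda_1}(p_1))\ldots \a(p_2) - f_{\lambda_1+\lambda_2}([{p_1}_{\lambda_1}p_2])$ (with signs and $\a$-twists as dictated by the formula), then apply $\delta^2_{HomL}$ with $n=2$ and expand everything in terms of $l, r, f$ and the bracket.

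The key steps, in order, are: first, write out $(\delta^2_{HomL}\delta^1_{HomL} f)_{\lambda_1,\lambda_2}(p_1,p_2,p_3)$ as an explicit alternating sum of the three groups of terms in \eqref{coboundarymap} — the $l$-action terms, the single $r$-action term, and the bracket-composition terms — each of which, after substituting $(\delta^1 f)$, breaks into three further subterms; second, organize the roughly nine-to-twelve resulting monomials by which structural axiom of the representation (Definition \ref{defrep}) or of the Hom-Leibniz conformal algebra (Definition \ref{defdef}) they are slated to cancel against; third, invoke the Hom-Leibniz conformal identity $[\a(p)_\lambda[q_\mu r]] = [[p_\lambda q]_{\lambda+\mu}\a(r)] + [\a(q)_\mu[p_\lambda r]]$ together with the five mixed-action compatibility relations \eqref{eq3}, \eqref{eq4}, and the following three equations of Definition \ref{defrep} to pair off the terms; fourth, use multiplicativity $\a[p_\lambda q] = [\a(p)_\lambda\a(q)]$ and the equivariance conditions $\b l = l\a$, $\b r = r\a$ and $f\a^{\otimes n} = \b f$ to make the $\a$-powers match up so that the cancellations are literal, not just formal.

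I would expect the main obstacle to be bookkeeping rather than conceptual: in the Leibniz (as opposed to Lie) case there is no skew-symmetry to collapse the $i<j$ sum, so both ``$r$ acting on the right'' and ``$l$ acting on the left'' terms survive and must be matched using the two distinct compatibility relations \eqref{eq3} and \eqref{eq4} — getting the $\lambda$-indices, the hatted omissions $\hat{\lambda_i}$, and the signs $(-1)^{i+j}$ exactly right is where errors creep in. A secondary subtlety is that the coboundary formula mixes $\a^{n-1}(p_i)$ inside actions with $\a(p_i)$ inside the bracket-composition terms, so one must be careful that when $\delta^2$ is applied to $\delta^1 f$ the nested $\a$'s land consistently; this is exactly where multiplicativity and the module equivariance of $\b$ are needed. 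Once the terms are laid out in a table and grouped by type, each group cancels in pairs or triples via one designated axiom, and the total is zero; I would present the computation schematically, showing the cancellation of one representative group in full and asserting the others follow identically by the same mechanism.
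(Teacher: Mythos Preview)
Your proposal is correct and follows essentially the same approach as the paper: the paper also writes out $(\delta^1_{HomL}f)_{\lambda}(p_1,p_2)$ and $(\delta^2_{HomL}f)_{\lambda_1,\lambda_2}(p_1,p_2,p_3)$ explicitly from \eqref{coboundarymap}, substitutes the former into the latter, expands all terms (using the bracket notation $l(p)_\lambda m=[p_\lambda m]$, $r(m)_\lambda p=[m_\lambda p]$ for convenience), and then cancels them in groups via the Hom-Leibniz conformal identity and the representation axioms of Definition~\ref{defrep}. The only cosmetic difference is that the paper displays the full expansion rather than treating one representative cancellation and asserting the rest, but the method is identical.
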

\begin{proof}Consider that for $ n=1$, we have 
	\begin{align*}
	(\delta_{HomL}^1f)(p_1, p_2)&=l{(p_1)}_\l f(p_2) + r(f(p_1))_\l p_2 - f([{p_1}_\l p_2])\\=& [{p_1}_\l f(p_2)] + [f(p_1)_\l p_2 ]- f([{p_1}_\l p_2]).
	\end{align*}  and 
\begin{equation}\label{eqdeltahom2}\begin{aligned}
(\delta_{HomL}^{2} f)_{\l_1,\l_2}(p_1, p_2, p_3) = & l(\a(p_1))_{\l_1}f_{\l_2}(p_2, p_3)- l(\a(p_2))_{\l_2}f_{\l_1}(p_1, p_3)\\
& - r(f_{\l_1}(p_1, p_2))_{\l_1+\l_2} \a(p_3) - f_{\l_1+\l_2}([{p_1}_{\l_1}{p_2}], \a(p_3))\\
& + f_{\l_1+\l_2}([{p_1}_{\l_1} p_3],\a(p_2)) - f_{\l_1+\l_2}([{p_2}_{\l_2}p_3], \a(p_1))\\= &[(\a(p_1))_{\l_1}f_{\l_2}(p_2, p_3)]- [(\a(p_2))_{\l_2}f_{\l_1}(p_1, p_3)]\\
& - [(f_{\l_1}(p_1, p_2))_{\l_1+\l_2} \a(p_3)] - f_{\l_1+\l_2}([{p_1}_{\l_1}{p_2}], \a(p_3))\\
& + f_{\l_1+\l_2}([{p_1}_{\l_1} p_3],\a(p_2)) - f_{\l_1+\l_2}([{p_2}_{\l_2}p_3], \a(p_1)).
\end{aligned}\end{equation}So we have 
\begin{align*}&(\delta_{HomL}^{2}\circ\delta_{HomL}^{1} f)_{\l_1,\l_2}(p_1, p_2, p_3)\\ &=[(\a(p_1))_{\l_1}\delta_{HomL}^1f_{\l_2}(p_2, p_3)]- [(\a(p_2))_{\l_2}\delta_{HomL}^1f_{\l_1}(p_1, p_3)]\\
& - [(\delta_{HomL}^1f_{\l_1}(p_1, p_2))_{\l_1+\l_2} \a(p_3)] - \delta_{HomL}^1f_{\l_1+\l_2}([{p_1}_{\l_1}{p_2}], \a(p_3))\\
& + \delta_{HomL}^1f_{\l_1+\l_2}([{p_1}_{\l_1} p_3],\a(p_2)) - \delta_{HomL}^1f_{\l_1+\l_2}([{p_2}_{\l_2}p_3], \a(p_1))\\=&
[(\a(p_1))_{\l_1}\{ [{p_2}_{\l_2} f(p_3)] + [f(p_2)_{\l_2} p_3 ]- f([{p_2}_{\l_2} p_3])\}]\\&- [(\a(p_2))_{\l_2}\{ [{p_1}_{\l_1} f(p_3)] + [f(p_1)_{\l_1} p_3 ]- f([{p_1}_{\l_1} p_3])\}]\\& - [(\{ [{p_1}_{\l_1} f(p_2)] + [f(p_1)_{\l_1} p_2 ]- f([{p_1}_{\l_1} p_2])\})_{\l_1+\l_2} \a(p_3)] 
\\&- \{ [{[{p_1}_{\l_1}{p_2}]}_{\l_1+\l_2} f(\a(p_3))] + [f([{p_1}_{\l_1}{p_2}])_{\l_1+\l_2} \a(p_3) ]- f([{[{p_1}_{\l_1}{p_2}]}_{\l_1+\l_2} \a(p_3)])\}\\& +\{ [{[{p_1}_{\l_1} p_3]}_{\l_1+\l_2} f(\a(p_2))] + [f([{p_1}_{\l_1} p_3])_{\l_1+\l_2} \a(p_2) ]- f([{[{p_1}_{\l_1} p_3]}_{\l_1+\l_2} \a(p_2)])\}\\&
-\{ [{[{p_2}_{\l_2}p_3]}_{\l_1+\l_2} f(\a(p_1))] + [f([{p_2}_{\l_2}p_3])_{\l_1+\l_2} \a(p_1) ]- f([{[{p_2}_{\l_2}p_3]}_{\l_1+\l_2} \a(p_1)])\}\\=&
[(\a(p_1))_{\l_1}[{p_2}_{\l_2} f(p_3)]] + [(\a(p_1))_{\l_1}[f(p_2)_{\l_2} p_3 ]]- [(\a(p_1))_{\l_1}f([{p_2}_{\l_2} p_3])]\\&- [(\a(p_2))_{\l_2}[{p_1}_{\l_1} f(p_3)]] - [(\a(p_2))_{\l_2}[f(p_1)_{\l_1} p_3 ]]+ [(\a(p_2))_{\l_2}f([{p_1}_{\l_1} p_3])]\\& - [([{p_1}_{\l_1} f(p_2)])_{\l_1+\l_2} \a(p_3)] -[([f(p_1)_{\l_1} p_2 ])_{\l_1+\l_2} \a(p_3)]+ [(f([{p_1}_{\l_1} p_2]))_{\l_1+\l_2} \a(p_3)] 
\\&- \{[{[{p_1}_{\l_1}{p_2}]}_{\l_1+\l_2} f(\a(p_3))] + [f([{p_1}_{\l_1}{p_2}])_{\l_1+\l_2} \a(p_3) ]- f([{[{p_1}_{\l_1}{p_2}]}_{\l_1+\l_2} \a(p_3)])\}\\& +\{ [{[{p_1}_{\l_1} p_3]}_{\l_1+\l_2} f(\a(p_2))] + [f([{p_1}_{\l_1} p_3])_{\l_1+\l_2} \a(p_2) ]- f([{[{p_1}_{\l_1} p_3]}_{\l_1+\l_2} \a(p_2)])\}\\&
-\{ [{[{p_2}_{\l_2}p_3]}_{\l_1+\l_2} f(\a(p_1))] + [f([{p_2}_{\l_2}p_3])_{\l_1+\l_2} \a(p_1) ]- f([{[{p_2}_{\l_2}p_3]}_{\l_1+\l_2} \a(p_1)])\} \\=&0.\end{align*}
Thus, our conclusion holds.

\end{proof}Let $\L_{\N_h}$ be a Hom-Nijenhuis Leibniz conformal algebra with representation $(\M, l , r , \N_{\M},\b )$. Now by Proposition \eqref{prop2.9} and Proposition \eqref{prop2.12}, we obtain a new Hom-Nijenhuis Leibniz conformal algebra $(\L_{\N_h}, [\cdot_\l \cdot]_{\N_h},\a)$ with representation
 $(\M, l' , r' , \b, \N_{\M} )$ induced by the Nijenhuis operator $\N_h$.
 For $n\geq 0$, the coboundary map $\partial_{HN} : C^{n}_{HN}(\L, \M )\to C^{n+1}_{HN}(\L,\M)$ can be defined by 
\begin{align*}&(\partial_{HN} f)_{\l_1,\cdots,\l_{n}}(p_1,\cdots, p_{n+1})\\=& \sum_{i=1}^{n}(-1)^{i+1}l'(\a^{n-1}(p_i))_{\l_i}f_{\l_1,\cdots,\hat{\l_{i}},\cdots, \l_{n}}(p_1,\cdots,\hat{p_i},\cdots, p_{n+1})\\&
+ (-1)^{n+1}r'{(f_{\l_1,\cdots, \l_{n-1}}(p_1,\cdots, p_{n}))}_{\l_1+\l_2+\cdots+\l_n} \a^{n-1}(p_{n+1})\\&
+ \sum_{i<j}(-1)^{i+j} f_{\l_i+ \l_j ,\l_1,\cdots, \hat{\l_{i}},\cdots, \hat{\l_{j}},\cdots, \l_{n}}([{p_{i}}_{\l_i} p_j]_{\N_h}, \a(p_1), \cdots, \hat{\a(p_i)},\cdots, \hat{\a(p_j)},\cdots,\a(p_{n+1})),
\\=& \sum_{i=1}^{n}
(-1)^{i+1} l(\N_{h}(\a^{n-1}(p_i)))_{\l_i}f_{\l_1,\cdots,\hat{\l_{i}},\cdots, \l_{n}}(p_1,\cdots,\hat{p_i},\cdots, p_{n+1})\\&+ \sum_{i=1}^{n}(-1)^{i+1}l(\a^{n-1}(p_i))_{\l_i}\N_{\M}(f_{\l_1,\cdots,\hat{\l_{i}},\cdots, \l_{n}}(p_1,\cdots,\hat{p_i},\cdots, p_{n+1}))\\&- \sum_{i=1}^{n}(-1)^{i+1} \N_{\M} \circ l(\a^{n-1}(p_i))_{\l_i}f_{\l_1,\cdots,\hat{\l_{i}},\cdots, \l_{n}}(p_1,\cdots,\hat{p_i},\cdots, p_{n+1})\\&
+(-1)^{n+1}r(\N_{\M}((f_{\l_1,\cdots, \l_{n-1}}(p_1,\cdots, p_{n}))))_{\l_1+\l_2+\cdots+\l_n} \a^{n-1}(p_{n+1})\\& +(-1)^{n+1} r((f_{\l_1,\cdots, \l_{n-1}}(p_1,\cdots, p_{n})))_{\l_1+\l_2+\cdots+\l_n}\N_{h}(\a^{n-1}(p_{n+1}))\\&- (-1)^{n+1}\N_{\M} \circ r((f_{\l_1,\cdots, \l_{n-1}}(p_1,\cdots, p_{n})))_{\l_1+\l_2+\cdots+\l_n}\a^{n-1}(p_{n+1})\\&
+ \sum_{i<j}(-1)^{i+j} f_{\l_i+ \l_j ,\l_1,\cdots, \hat{\l_{i}},\cdots, \hat{\l_{j}},\cdots, \l_{n}}([{\N_{h}(p_i)}_{\l_i}p_j], \a(p_1), \cdots, \hat{\a(p_i)},\cdots, \hat{\a(p_j)},\cdots,\a(p_{n+1}))\\&
+ \sum_{i<j}(-1)^{i+j} f_{\l_i+ \l_j ,\l_1,\cdots, \hat{\l_{i}},\cdots, \hat{\l_{j}},\cdots, \l_{n}}([{p_i}_{\l_i}\N_{h}(p_j)], \a(p_1), \cdots, \hat{\a(p_i)},\cdots, \hat{\a(p_j)},\cdots,\a(p_{n+1}))\\&- \sum_{i<j}(-1)^{i+j} f_{\l_i+ \l_j ,\l_1,\cdots, \hat{\l_{i}},\cdots, \hat{\l_{j}},\cdots, \l_{n}}(\N_{h}[{p_i}_{\l_i}{p_j}], \a(p_1), \cdots, \hat{\a(p_i)},\cdots, \hat{\a(p_j)},\cdots,\a(p_{n+1})),
 \end{align*} where $f\in C^{n}_{HomL}(\L, \M )$ and $p_1,p_2,\cdots,p_{n+1}\in \L$.
 The above map satisfies the condition $\partial_{HN}^n\circ \partial_{HN}^{n+1}=0$, as it is a coboundary map for the induced Hom-Leibniz conformal algebra $(\L, [-_\l-]_{\N_h},\a)$. Therefore $ (C^{n}_{HN}(\L, \M ),\partial_{HN})$ is a cochain complex. This cochain complex is called the cochain complex of a Hom-Nijenhuis operator, with coefficients in the representation $(\M,l,r,\b,\N_{\M})$.\\ 
 Let us now define a map  $\phi^n:(C^{n}_{HL}(\L, \M ),\delta) \to (C^{n}_{HN}(\L, \M ),\partial)$, for $n\geq 1$ by 
	\begin{align*}&\phi^n(f)_{\l_1,\l_2,\cdots,\l_{n-1}}(p_1, p_2, \cdots, p_n) \\=& f_{\l_1,\l_2,\cdots,\l_{n-1}}(\N_{h}(p_1), \N_{h} (p_2), \cdots, \N_{h} (p_n)) - (\N_{\M} \circ f)_{\l_1,\l_2,\cdots,\l_{n-1}}(p_1, \N_{h} (p_2), \cdots, \N_{h} (p_n))\\& - (\N_{\M} \circ f)_{\l_1,\l_2,\cdots,\l_{n-1}}(\N_{h} (p_1), p_2, \N_{h} (p_3), \cdots, \N_{h} (p_n)) - \cdots\\& - (\N_{\M} \circ f)_{\l_1,\l_2,\cdots,\l_{n-1}}(\N_{h} (p_1), \N_{h} (p_2), \cdots, \N_{h} (p_{n-1}), p_n) + \N_{\M}^2(f_{\l_1,\l_2,\cdots,\l_{n-1}}(p_1, p_2, \cdots, p_n)).
	\end{align*}
	For $n= 2$, we get 
	\begin{align*}
	\phi^2(f)_{\l_1}(p_1, p_2) = f_{\l_1}(\N_{h} (p_1), \N_{h} (p_2)) - (\N_{\M} \circ f)_{\l_1}(p_1, \N_{h} (p_2))
	- (\N_{\M} \circ f)_{\l_1}(\N_{h} (p_1), p_2) + \N_{\M}^2(f_{\l_1}(p_1, p_2)). \end{align*}
 \begin{lem}
 	For $f \in C^{n}_{HomL}(\L, \M )$ and $p_1, \cdots , p_{n+1} \in \L$, we have $$\phi^{n+1}(\delta_{HomL}^n(f))(p_1, p_2, p_3, \cdots , p_{n+1}) = \partial^n_{HN}(\phi^n(f))(p_1, p_2, p_3, \cdots , p_{n+1}).$$ 
 \end{lem}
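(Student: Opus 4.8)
The plan is to verify the identity by directly expanding both sides and matching terms, in the spirit of the computations in Proposition \eqref{prop2.12}. Throughout write $N=\N_h$ and $P=\N_{\M}$, and recall that $\partial^n_{HN}$ is by construction nothing but $\delta^n_{HomL}$ applied to the induced Hom-Leibniz conformal algebra $(\L_{\N_h},[\cdot_\l\cdot]_{\N_h},\a)$ with representation $(\M,l',r',\b)$; thus the lemma asserts that $\phi=\{\phi^n\}_{n\ge 1}$ is a chain map between the two cochain complexes. The structural facts driving the computation are: (i) the Nijenhuis identity in the form $[N(p)_\l N(q)]=N\bigl([p_\l q]_{\N_h}\bigr)$, which is Definition \eqref{Nijenhuis} combined with the definition of $[\cdot_\l\cdot]_{\N_h}$ from Proposition \eqref{prop2.9}; (ii) the two intertwining relations
$$l(N(p))_\l P(m)=P\bigl(l'(p)_\l m\bigr),\qquad r(P(m))_\l N(p)=P\bigl(r'(m)_\l p\bigr),$$
which are exactly the last two conditions of Definition \eqref{defrepNIJ} rewritten through the formulas for $l'$ and $r'$ in Proposition \eqref{prop2.12}; and (iii) the commutations $\a N=N\a$ (hence $\a^{n-1}N=N\a^{n-1}$) and $\b P=P\b$, the multiplicativity $\a[p_\l q]=[\a(p)_\l\a(q)]$, and $\b(l(p)_\l m)=l(\a(p))_\l\b(m)$, $\b(r(p)_\l m)=r(\a(p))_\l\b(m)$.

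First I would expand the left-hand side. Computing $\delta^n_{HomL}f\in C^{n+1}_{HomL}(\L,\M)$ yields $n$ ``left'' terms $(-1)^{i+1}l(\a^{n-1}(p_i))_{\l_i}f(\cdots\hat{p_i}\cdots)$, one ``right'' term $(-1)^{n+1}r(f(p_1,\cdots,p_n))_{\l_1+\cdots+\l_n}\a^{n-1}(p_{n+1})$, and $\binom{n+1}{2}$ ``bracket'' terms $(-1)^{i+j}f(\cdots,[{p_i}_{\l_i}p_j],\cdots,\a(p_k),\cdots)$; then $\phi^{n+1}$ sends a cochain $g$ to $g(N(p_1),\cdots,N(p_{n+1}))-\sum_{k=1}^{n+1}P\bigl(g(N(p_1),\cdots,p_k,\cdots,N(p_{n+1}))\bigr)+P^2\bigl(g(p_1,\cdots,p_{n+1})\bigr)$, so $\phi^{n+1}(\delta^n_{HomL}f)$ is an ``all--$N$'' copy of the three families (in which each $[{p_i}_{\l_i}p_j]$ becomes $[N(p_i)_{\l_i}N(p_j)]$ and each $\a(p_k)$ becomes $N(\a(p_k))=\a(N(p_k))$), plus $n+1$ copies with a single untwisted slot and a $P$ in front, plus one copy with $P^2$ in front and nothing twisted. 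Dually, I would expand $\partial^n_{HN}(\phi^n f)$ by substituting the defining formula for $\phi^n f$ into the three groups of $\partial^n_{HN}$, then expanding each $l'(\a^{n-1}(p_i))_{\l_i}(-)$ and $r'(f(\cdots))_{\l_1+\cdots+\l_n}(-)$ into its three summands via Proposition \eqref{prop2.12}, and expanding each deformed bracket $[{p_i}_{\l_i}p_j]_{\N_h}$ into $[N(p_i)_{\l_i}p_j]+[{p_i}_{\l_i}N(p_j)]-N[{p_i}_{\l_i}p_j]$.

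The core of the argument is a term-by-term comparison of the two expansions. The signs $(-1)^{i+1}$, $(-1)^{n+1}$, $(-1)^{i+j}$ occur identically on both sides, so no sign juggling is needed; all the content lies in the operator identities (i)--(iii). For the ``left'' family one matches $l(N(\a^{n-1}(p_i)))_{\l_i}(\cdots)$ from $\phi^{n+1}(\delta^n_{HomL}f)$ against $l'(\a^{n-1}(p_i))_{\l_i}\bigl(\phi^n f(\cdots)\bigr)$ from $\partial^n_{HN}(\phi^n f)$: expanding $l'$ and the inner $\phi^n f$, and repeatedly using $l(N(p))_\l P(m)=P(l'(p)_\l m)$ to push the $P$'s outward while tracking the powers of $P$ produced, turns the latter into the former. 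The ``right'' family matches by the mirror computation with the second relation in (ii). The ``bracket'' family is where the Nijenhuis identity enters: applying $\phi^n$ to an argument $[{p_i}_{\l_i}p_j]_{\N_h}$ and using (i) to collapse $N\bigl([{p_i}_{\l_i}p_j]_{\N_h}\bigr)=[N(p_i)_{\l_i}N(p_j)]$ reproduces precisely the bracket terms that $\phi^{n+1}$ generates out of the ordinary bracket $[{p_i}_{\l_i}p_j]$ inside $\delta^n_{HomL}f$, once the $\a(p_k)$--slots are untwisted consistently via $\a N=N\a$ and multiplicativity. Summing over $i$, over $i<j$, and over the choice of untwisted slot, everything either cancels in pairs or combines, giving the asserted equality.

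The main obstacle is bookkeeping rather than anything conceptual: once $\delta/\partial$, $\phi$, $l'/r'$ and the deformed bracket are all unfolded, each side is a sum of order $n^2$ terms, and the delicate point is the overlap between ``leaving a slot untwisted'' in $\phi$ and ``untwisting a bracket-slot'' inside $\delta^n_{HomL}f$: a term $P\circ f(\cdots,[{p_i}_{\l_i}p_j],\cdots)$ on the left must be paired with a term on the right in which $\phi^n$ has been expanded and the $-N[{p_i}_{\l_i}p_j]$ summand of $[{p_i}_{\l_i}p_j]_{\N_h}$ absorbs it. One must keep the hatted indices $\hat{\l_i},\hat{p_i}$ aligned through every substitution, ensure that no term is double-counted between the ``one untwisted slot'' family and the ``$P^2$'' family, and cancel the transient higher powers of $P$ (up to $P^3$) that appear when $l'$ or $r'$ hits the $P^2 f$ part of $\phi^n f$. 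It is worth running the matching first in the case $n=1$, for which $\delta^1_{HomL}$, $\delta^2_{HomL}$ and $\phi^2$ are already written out above, to fix the conventions; the general $n$ then follows by applying the same manipulations uniformly to each summand.
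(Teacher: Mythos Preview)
Your proposal is correct and follows essentially the same route as the paper: both sides are fully expanded into their ``left'', ``right'' and ``bracket'' families, and the match is obtained by invoking exactly the identities you isolate as (i)--(iii), with the paper's proof likewise terminating in an appeal to Definition \eqref{defrepNIJ} after writing out the two expansions. Your presentation is more structured in that you name the three governing identities up front and explain \emph{why} each family matches, whereas the paper simply displays the raw expansions and asserts the conclusion; but the underlying argument is identical.
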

\begin{proof} Firstly, we consider that
\begin{align*}&\phi^{n+1}(\delta_{HomL}^n(f))(p_1, p_2, p_3, \cdots , p_{n+1})
\\=& \delta_{HomL}^n f_{\l_1,\l_2,\cdots, \l_{n}}(\N_{h} (p_1), \N_{h} (p_2), \cdots, \N_{h} (p_n), \N_{h} (p_{n+1}))
\\&- (\N_{\M} \circ \delta_{HomL}^nf)_{\l_1,\l_2,\cdots,\l_{n}}(p_1, \N_{h} (p_2), \cdots, \N_{h} (p_n), \N_{h} (p_{n+1}))
\\& 
- (\N_{\M} \circ \delta_{HomL}^nf)_{\l_1,\l_2,\cdots,\l_{n}}(\N_{h} (p_1), p_2, \N_{h} (p_3), \cdots, \N_{h} (p_n), \N_{h} (p_{n+1}))\\&- \cdots 
 - (\N_{\M} \circ \delta_{HomL}^nf)_{\l_1,\l_2,\cdots,\l_{n}}(\N_{h} (p_1), \N_{h} (p_2), \cdots, \N_{h} (p_{n-1}), \N_{h} (p_{n}), p_{n+1}) 
\\&+ \N_{\M}^2(\delta_{HomL}^nf_{\l_1,\l_2,\cdots,\l_{n}}(p_1, p_2, \cdots, p_n))\\=& \sum_{i=1}^{n}(-1)^{i+1}[(\a^{n-1}(\N_{h} (p_i)))_{\l_i}f_{\l_2,\cdots,\hat{\l_{i}},\cdots, \l_{n+1}}(\N_{h} (p_1),\cdots,\hat{\N_{h} (p_i)},\cdots, \N_{h} (p_{n+1}))]
\\& + (-1)^{n+1}[f_{\l_1,\cdots, \l_{n-1}}(\N_{h} (p_1),\cdots, \N_{h} (p_{n}))_{\l_1+\cdots+\l_n} \a^{n-1}(\N_{h} (p_{n+1}))]
\\& + \sum_{i<j}(-1)^{i+j} f_{\l_i+ \l_j ,\l_1,\cdots, \hat{\l_{i}},\cdots, \hat{\l_{j}},\cdots, \l_{n}}
\\&([{\N_{h} (p_i)}_{\l_i} \N_{h} (p_j)], \a(\N_{h} (p_1)), \cdots, \hat{\a(\N_{h} (p_i))},\cdots, \hat{\a(\N_{h} (p_j))},\cdots,\a(\N_{h} (p_{n+1})))
\\&- \N_{\M}(\left[ \a^{n-1}{(p_1)}_{\l_{1}} f_{\l_2,\cdots, \l_{n}}(\N_{h} (p_2), \N_{h} (p_3), \cdots, \N_{h} (p_{n+1})) \right]
\\& - [\a^{n-1}{(\N_{h} (p_2))}_{\l_2 }f_{\l_1,\cdots,\hat{\l_{2}},\cdots, \l_{n}}(p_1, \N_{h} (p_3), \cdots, \N_{h} (p_{n+1}))] \\&+ [\a^{n-1}(\N_{h} {p_3})_{\l_3 }f_{\l_1,\cdots,\hat{\l_{3}},\cdots, \l_{n}}(\N_{h} (p_1), p_2, \N_{h} (p_4), \cdots, \N_{h} (p_{n+1}))] \\&- \cdots
+ (-1)^{n+1}[\a^{n-1}(\N_{h} {p_n})_{\l_n} f_{\l_2,\cdots,\hat{\l_{n}},\cdots, \l_{n}}(p_1, \N_{h} (p_2), \cdots, \N_{h} (p_{n-1}), \N_{h} (p_{n+1}))]
\\& + (-1)^{n+1}[f_{\l_1,\cdots, \l_{n-1}}(p_1, \N_{h} (p_2), \N_{h} (p_3), \cdots, \N_{h} (p_n))_{\l_1+\cdots+\l_{n}} \a^{n-1}(\N_{h} (p_{n+1}))] 
\\&- \sum_{j=2}^{n+1}(-1)^{1+j} f_{\l_1+\l_j,\hat{\l_1},\l_2,\cdots,\hat{\l_j},\cdots, \l_{n}}
\\&([{p_1}_{\l_1} \N_{h} (p_j)], \a(\N_{h} (p_2)), \a(\N_{h} (p_3)), \cdots, \a(\N_{h} (p_{j-1})), \hat{\a(\N_{h} (p_j))}, \cdots, \a(\N_{h} (p_{n+1})))
\\&+ \sum_{2\leq i < j \leq n+1,i\neq1}(-1)^i f_{\l_i+\l_j,\l_1,\cdots,\hat{\l_{i}},\cdots,\hat{\l_j},\cdots, \l_{n}}
([{\N_{h} (p_i)}_{\l_i} \N_{h} (p_j)],\a(p_1), \a(\N_{h} (p_2)), \cdots, \\&\a(\hat{\N_{h} (p_i)}), \cdots, \hat{\a(\N_{h} (p_j))}, \a(\N_{h} (p_{j+1})), \cdots, \a(\N_{h} (p_{n+1}))))
\\&
\\& -\N_{\M}(\left[ \a^{n-1}{(\N_{h} (p_1))}_{\l_1} f_{\l_2,\cdots, \l_{n}}(p_2, \N_{h} (p_3), \cdots, \N_{h} (p_{n+1})) \right]
\\& - [\a^{n-1}{(p_2)}_{\l_2} f_{\l_1,\cdots,\hat{\l_{2}},\cdots, \l_{n}}(\N_{h} (p_1), \hat{p_2}, \N_{h} (p_3), \cdots, \N_{h} (p_{n+1}))]
\\&+ [\a^{n-1}{(\N_{h} (p_3))}_{\l_3} f_{\l_1,\cdots,\hat{\l_{3}},\cdots, \l_{n}}(\N_{h} (p_1), p_2,\hat{\N_{h} (p_3)}, \N_{h} (p_4), \cdots, \N_{h} (p_{n+1}))] \\&- \cdots 
+ (-1)^{n+1}[\a^{n-1}{(\N_{h} (p_n))}_{\l_n} f_{\l_1,\cdots,\hat{\l_{n}},\cdots, \l_{n}}(\N_{h} (p_1), p_2, \cdots, \N_{h} (p_{n-1}), \N_{h} (p_{n+1}))]
\\&+ (-1)^{n+1}[f_{\l_1,\cdots, \l_{n-1}}(\N_{h} (p_1), p_2, \N_{h} (p_3), \cdots, \N_{h} (p_n))_{\l_1+\cdots+ \l_{n}} \a^{n-1}(\N_{h} (p_{n+1}))]
\\&+ \sum_{j=3}^{n+1} f_{\l_2+\l_j,\l_1,\l_3,\cdots,\hat{\l_{j}},\cdots, \l_{n}}([{p_2}_{\l_2} \N_{h} (p_j)],\a(\N_{h} (p_1)),\hat{\a(\N_{h} (p_2))} ,\\& \a(\N_{h} (p_3)), \cdots, \a(\N_{h} (p_{j-1})),\hat{\a(\N_{h} (p_j))} \a(\N_{h} (p_{j+1})), \cdots, \a(\N_{h} (p_{n+1})))
\\&+ \sum_{1\leq i<j\leq n+1, i\neq2}^{n+1}(-1)^i f_{\l_i+\l_j, \l_1,\cdots,\hat{\l_{i}},\cdots,\hat{\l_j},\cdots, \l_{n}}([{\N_{h} (p_i)}_{\l_i }\N_{h} (p_j)], \a(\N_{h} (p_1)), \\&\a(p_2), \a(\N_{h} (p_3)), \cdots, \hat{\a(\N_{h} (p_i)}), \cdots, \hat{\a(\N_{h} (p_j))}, \a(\N_{h} (p_{j+1})), \cdots, \a(\N_{h} (p_{n+1})))) \\&- \cdots 
+\N_{\M}^2 (\sum_{i=1}^{n}(-1)^{i+1}[(\a^{n-1}(p_i))_{\l_i}f_{\l_1,\cdots,\hat{\l_{i}},\cdots, \l_{n}}(p_1,\cdots,\hat{p_i},\cdots, p_{n+1})]
\\&+ (-1)^{n+1}[f_{\l_1,\cdots, \l_{n-1}}(p_1,\cdots, p_{n})_{\l_1+\cdots+\l_n} \a^{n-1}(p_{n+1})]
\\&+ \sum_{i<j}(-1)^{i+j} f_{\l_i+ \l_j ,\l_1,\cdots, \hat{\l_{i}},\cdots, \hat{\l_{j}},\cdots, \l_{n}}([{p_{i}}_{\l_i} p_j], \a(p_1), \cdots, \hat{\a(p_i)},\cdots, \hat{\a(p_j)},\cdots,\a(p_{n+1}))), \end{align*}Again 
	\begin{align*}
& \partial_{HN}^n(\phi^n(f))_{\l_1,\cdots, \l_{n}}(p_1, p_2, p_3, \cdots , p_{n+1})\\
	=&\sum_{i=1}^{n}(-1)^{i+1} l(\N_{h}(\a^{n-1}(p_i)))_{\l_i}\phi^nf_{\l_1,\cdots,\hat{\l_{i}},\cdots, \l_{n}}(p_1,\cdots,\hat{p_i},\cdots, p_{n+1})\\&+ \sum_{i=1}^{n}(-1)^{i+1}l(\a^{n-1}(p_i))_{\l_i}\N_{\M}(\phi^nf_{\l_1,\cdots,\hat{\l_{i}},\cdots, \l_{n}}(p_1,\cdots,\hat{p_i},\cdots, p_{n+1}))\\&- \sum_{i=1}^{n}(-1)^{i+1} \N_{\M} \circ l(\a^{n-1}(p_i))_{\l_i}\phi^nf_{\l_1,\cdots,\hat{\l_{i}},\cdots, \l_{n}}(p_1,\cdots,\hat{p_i},\cdots, p_{n+1})\\&
	+(-1)^{n+1}r(\N_{\M}((\phi^nf_{\l_1,\cdots, \l_{n-1}}(p_1,\cdots, p_{n}))))_{\l_1+\l_2+\cdots+\l_n} \a^{n-1}(p_{n+1})\\& +(-1)^{n+1} r(\phi^nf_{\l_1,\cdots, \l_{n-1}}(p_1,\cdots, p_{n}))_{\l_1+\l_2+\cdots+\l_n}\N_{h}(\a^{n-1}(p_{n+1}))\\&- (-1)^{n+1}\N_{\M}\circ r(\phi^nf_{\l_1,\cdots, \l_{n-1}}(p_1,\cdots, p_{n}))_{\l_1+\l_2+\cdots+\l_n}\a^{n-1}(p_{n+1})\\&
	+ \sum_{i<j}(-1)^{i+j} \phi^nf_{\l_i+ \l_j ,\l_1,\cdots, \hat{\l_{i}},\cdots, \hat{\l_{j}},\cdots, \l_{n}}\\&
    ([{\N_{h}(p_i)}_{\l_i}p_j], \a(p_1), \cdots, \hat{\a(p_i)},\cdots, \hat{\a(p_j)},\cdots,\a(p_{n+1}))\\&
	+ \sum_{i<j}(-1)^{i+j} \phi^nf_{\l_i+ \l_j ,\l_1,\cdots, \hat{\l_{i}},\cdots, \hat{\l_{j}},\cdots, \l_{n}}\\&([{p_i}_{\l_i}\N_{h}(p_j)], \a(p_1), \cdots, \hat{\a(p_i)},\cdots, \hat{\a(p_j)},\cdots,\a(p_{n+1}))
    \\& - \sum_{i<j}(-1)^{i+j} \phi^nf_{\l_i+ \l_j ,\l_1,\cdots, \hat{\l_{i}},\cdots, \hat{\l_{j}},\cdots, \l_{n}}\\&(\N_{h}[{p_i}_{\l_i}{p_j}], \a(p_1), \cdots, \hat{\a(p_i)},\cdots, \hat{\a(p_j)},\cdots,\a(p_{n+1}))\\=&\sum_{i=1}^{n}(-1)^{i+1}[(\N_{h}(\a^{n-1}(p_i)))_{\l_i}\phi^nf_{\l_1,\cdots,\hat{\l_{i}},\cdots, \l_{n}}(p_1,\cdots,\hat{p_i},\cdots, p_{n+1})]
    \\&+ \sum_{i=1}^{n}(-1)^{i+1}[(\a^{n-1}(p_i))_{\l_i}\N_{\M}(\phi^nf_{\l_1,\cdots,\hat{\l_{i}},\cdots, \l_{n}}(p_1,\cdots,\hat{p_i},\cdots, p_{n+1}))]
    \\&- \sum_{i=1}^{n}(-1)^{i+1} \N_{\M} \circ [(\a^{n-1}(p_i))_{\l_i}\phi^nf_{\l_1,\cdots,\hat{\l_{i}},\cdots, \l_{n}}(p_1,\cdots,\hat{p_i},\cdots, p_{n+1})]
    \\& +(-1)^{n+1}[\N_{\M}(\phi^nf_{\l_1,\cdots, \l_{n-1}}(p_1,\cdots, p_{n}))_{\l_1+\l_2+\cdots+\l_n} \a^{n-1}(p_{n+1})]
    \\& +(-1)^{n+1}[\phi^nf_{\l_1,\cdots, \l_{n-1}}(p_1,\cdots, p_{n})_{\l_1+\l_2+\cdots+\l_n}\N_{h}\a^{n-1}(p_{n+1})]
    \\&- (-1)^{n+1}\N_{\M} \circ [\phi^nf_{\l_1,\cdots, \l_{n-1}}(p_1,\cdots, p_{n})_{\l_1+\l_2+\cdots+\l_n}\a^{n-1}(p_{n+1})]
    \\& + \sum_{i<j}(-1)^{i+j} \phi^nf_{\l_i+ \l_j ,\l_1,\cdots, \hat{\l_{i}},\cdots, \hat{\l_{j}},\cdots, \l_{n}}([{\N_{h}(p_i)}_{\l_i}p_j], \a(p_1), \cdots, \hat{\a(p_i)},\cdots, \hat{\a(p_j)},\cdots,\a(p_{n+1}))
    \\&+ \sum_{i<j}(-1)^{i+j} \phi^nf_{\l_i+ \l_j ,\l_1,\cdots, \hat{\l_{i}},\cdots, \hat{\l_{j}},\cdots, \l_{n}}([{p_i}_{\l_i}\N_{h}(p_j)], \a(p_1), \cdots, \hat{\a(p_i)},\cdots, \hat{\a(p_j)},\cdots,\a(p_{n+1}))
    \\&- \sum_{i<j}(-1)^{i+j} \phi^nf_{\l_i+ \l_j ,\l_1,\cdots, \hat{\l_{i}},\cdots, \hat{\l_{j}},\cdots, \l_{n}}(\N_{h}[{p_i}_{\l_i}{p_j}], \a(p_1), \cdots, \hat{\a(p_i)},\cdots, \hat{\a(p_j)},\cdots,\a(p_{n+1}))
	\\=& [\a^{n-1}{\N_{h}(p_1)}_{\l_1} \phi^n(f)_{\l_1,\cdots,\hat{\l_{1}},\cdots, \l_{n}}(p_2, p_3, \cdots , p_{n+1})]
    \\& - [\a^{n-1}{\N_{h}(p_2)}_{\l_2} \phi^n(f)_{\l_1,\cdots,\hat{\l_{2}},\cdots, \l_{n}}(p_1, p_3, \cdots , p_{n+1})]
    \\&+ [\a^{n-1}{\N_{h}(p_3)}_{\l_3}\phi^n(f)_{\l_1,\cdots,\hat{\l_{3}},\cdots, \l_{n}}(p_1, p_2, p_4, \cdots , p_{n+1})]
    \\& - \cdots + (-1)^{n+1}[\a^{n-1}{\N_{h}(p_n)}_{\l_n}\phi^n(f)_{\l_1,\cdots,\hat{\l_{n}},\cdots, \l_{n}}(p_1, p_2, \cdots , p_{n-1}, p_{n+1})]
    \\& + [(\a^{n-1}(p_1))_{\l_1}\N_{\M}(\phi^nf_{\l_1,\cdots,\hat{\l_{1}},\cdots, \l_{n}}(p_1,\cdots,\hat{p_1},\cdots, p_{n+1}))]
    \\&- [(\a^{n-1}(p_2))_{\l_2}\N_{\M}(\phi^nf_{\l_1,\cdots,\hat{\l_{2}},\cdots, \l_{n}}(p_1,\cdots,\hat{p_2},\cdots, p_{n+1}))] +\cdots
    \\&	+ (-1)^{n+1}[(\a^{n-1}(p_n))_{\l_n}\N_{\M}(\phi^nf_{\l_1,\cdots,\hat{\l_{n}},\cdots, \l_{n}}(p_1,\cdots,\hat{p_n},\cdots, p_{n+1}))]
    \\&-\N_{\M} [\a^{n-1}{ (p_1)}_{\l_1} \phi^n(f)_{\l_1,\cdots,\hat{\l_{1}},\cdots, \l_{n}}(p_2, p_3, \cdots , p_{n+1})] +
    \\&\N_{\M} [\a^{n-1}{(p_2)}_{\l_2} \phi^n(f)_{\l_1,\cdots,\hat{\l_{2}},\cdots, \l_{n}}(p_1, p_3, \cdots, p_{n+1})]
    \\&- \N_{\M} [\a^{n-1}{(p_3)}_{\l_3 }\phi^n(f)_{\l_1,\cdots,\hat{\l_{3}},\cdots, \l_{n}}(p_1, p_2, p_4, \cdots, p_{n+1})]
    \\& - \cdots + (-1)^{n+1}\N_{\M}[\a^{n-1}{(p_n)}_{\l_n }\phi^n(f)(p_1, p_2, \cdots , p_{n-1}, p_{n+1})]
    \\&+(-1)^{n+1}[\N_{\M}(\phi^nf_{\l_1,\cdots, \l_{n-1}}(p_1,\cdots, p_{n}))_{\l_1+\l_2+\cdots+\l_n} \a^{n-1}(p_{n+1})]
    \\&+(-1)^{n+1}[\phi^nf_{\l_1,\cdots, \l_{n-1}}(p_1,\cdots, p_{n})_{\l_1+\l_2+\cdots+\l_n}\N_{h}\a^{n-1}(p_{n+1})]
    \\&- (-1)^{n+1}\N_{\M} \circ [\phi^nf_{\l_1,\cdots, \l_{n-1}}(p_1,\cdots, p_{n})_{\l_1+\l_2+\cdots+\l_n}\a^{n-1}(p_{n+1})]
    \\&- \sum_{j=2}^{n+1} \phi^nf_{\l_1+ \l_j ,\l_1,\cdots, \hat{\l_{1}},\cdots, \hat{\l_{j}},\cdots, \l_{n}}
    \\& ([{\N_{h}(p_1)}_{\l_1}p_j]+[{p_1}_{\l_1}\N_{h}(p_j)]-\N_{h}[{p_1}_{\l_1}{p_j}], \a(p_1), \cdots, \hat{\a(p_1)},\cdots, \hat{\a(p_j)},\cdots,\a(p_{n+1}))
    \\&+ \sum_{j=3}^{n+1} \phi^nf_{\l_2+ \l_j ,\l_1,\cdots, \hat{\l_{2}},\cdots, \hat{\l_{j}},\cdots, \l_{n}}
    \\&([{\N_{h}(p_2)}_{\l_2}p_j]+[{p_2}_{\l_2}\N_{h}(p_j)]-\N_{h}[{p_2}_{\l_2}{p_j}], \a(p_1), \cdots, \hat{\a(p_2)},\cdots, \hat{\a(p_j)},\cdots,\a(p_{n+1}))
    \\&- \sum_{j=4}^{n+1} \phi^nf_{\l_3+ \l_j ,\l_1,\cdots, \hat{\l_{3}},\cdots, \hat{\l_{j}},\cdots, \l_{n}}
    \\&([{\N_{h}(p_3)}_{\l_3}p_j]+[{p_3}_{\l_3}\N_{h}(p_j)]-\N_{h}[{p_3}_{\l_2}{p_j}], \a(p_1), \cdots, \hat{\a(p_3)},\cdots, \hat{\a(p_j)},\cdots,\a(p_{n+1}))
    \\&\cdots
    \\&+ \sum_{j=n+1}^{n+1}(-1)^n \phi^nf_{\l_n+ \l_j ,\l_1,\cdots, \hat{\l_{n}},\cdots, \hat{\l_{j}},\cdots, \l_{n}}
    \\&([{\N_{h}(p_n)}_{\l_n}p_j]+[{p_n}_{\l_n}\N_{h}(p_j)]-\N_{h}[{p_n}_{\l_n}{p_j}], \a(p_1), \cdots, \hat{\a(p_n)},\cdots, \hat{\a(p_j)},\cdots,\a(p_{n+1}))
\\=&[\a^{n-1}{\N_{h}(p_1)}_{\l_1}
\\&(f_{\l_2,\l_3,\cdots,\l_{n}}(\N_{h}(p_2), \N_{h}(p_3), \cdots, \N_{h}(p_{n+1})) \\&- (\N_{\M} \circ f)_{\l_2,\l_3,\cdots,\l_{n}}(p_2, \N_{h}(p_3), \cdots, \N_{h}(p_{n+1}))
\\& - (\N_{\M} \circ f)_{\l_2,\l_3,\cdots,\l_{n}}(\N_{h} (p_2), p_3, \N_{h} (p_4), \cdots, \N_{h} (p_{n+1})) \\&- \cdots
- (\N_{\M} \circ f)_{\l_2,\l_3,\cdots,\l_{n}}(\N_{h} (p_2), \N_{h} (p_3), \cdots, \N_{h} (p_{n}), p_{n+1})\\&+ \N_{\M}^2(f_{\l_2,\l_3,\cdots,\l_{n}}(p_2, p_3, \cdots, p_{n+1})))]
\\&- [\a^{n-1}{\N_{h}(p_2)}_{\l_2}
\\& f_{\l_1,\l_3,\cdots,\l_{n}}(\N_{h} (p_1), \N_{h} (p_3), \cdots, \N_{h} (p_{n+1}))\\& - (\N_{\M} \circ f)_{\l_1,\l_3,\cdots,\l_{n}}(p_1, \N_{h} (p_3), \cdots, \N_{h} (p_{n+1}))
\\& - (\N_{\M} \circ f)_{\l_1,\l_3,\cdots,\l_{n}}(\N_{h} (p_1), p_3, \N_{h} (p_4), \cdots, \N_{h} (p_{n+1})) \\&- \cdots - (\N_{\M} \circ f)_{\l_1,\l_3,\cdots,\l_{n}}(\N_{h} (p_1), \N_{h} (p_3), \cdots, \N_{h} (p_{n}), p_{n+1})\\&+ \N_{\M}^2(f_{\l_1,\l_3,\cdots,\l_{n}}(p_1, p_3, \cdots, p_{n+1}))] 
\\&
+ [\a^{n-1}{\N_{h}(p_3)}_{\l_3 }\\&f_{\l_1,\l_2,\l_4,\cdots,\l_{n}}(\N_{h} (p_1), \N_{h} (p_2),\N_{h} (p_4) \cdots, \N_{h} (p_{n+1})) \\&- (\N_{\M} \circ f)_{\l_1,\cdots,\hat{\l_3},\cdots,\l_{n}}(p_1, \N_{h} (p_2),\hat{\N_{h} (p_3)}, \cdots, \N_{h} (p_{n+1}))\\&- (\N_{\M} \circ f)_{\l_1,\cdots,\hat{\l_3},\cdots,\l_{n}}(\N_{h} (p_1), p_2, \hat {\N_{h} (p_3)}, \cdots, \N_{h} (p_n))\\& - \cdots - (\N_{\M} \circ f)_{\l_1,\cdots,\hat{\l_3},\cdots,\l_{n}}(\N_{h} (p_1), \N_{h} (p_2), \cdots, \N_{h} (p_{n}), p_{n+1})\\&+ \N_{\M}^2(f_{\l_1,\cdots,\hat{\l_3},\cdots,\l_{n}}(p_1, p_2,p_4, \cdots, p_{n+1}))]
\\&- \cdots + (-1)^{n+1}[\a^{n-1}{\N (p_n)}_{\l_n}\\& f_{\l_1,\l_2,\cdots,\l_{n-1}}(\N_{h} (p_1), \N_{h} (p_2), \cdots, \N_{h} (p_{n-1}),\N_{h} (p_{n+1}))\\& - (\N_{\M} \circ f)_{\l_1,\cdots,\hat{\l_n},\cdots,\l_{n}}(p_1, \N_{h} (p_2),\hat{\N_{h} (p_n)}, \cdots, \N_{h} (p_{n+1}))\\&
- (\N_{\M} \circ f)_{\l_1,\cdots,\hat{\l_n},\cdots,\l_{n}}(\N_{h} (p_1), p_2, \hat {\N_{h} (p_n)}, \cdots, \N_{h} (p_{n+1}))\\& - \cdots - (\N_{\M} \circ f)_{\l_1,\cdots,\hat{\l_n},\cdots,\l_{n}}(\N_{h} (p_1), \N_{h} (p_2), \cdots, \N_{h} (p_{n-1}), p_{n+1})\\&+ \N_{\M}^2(f_{\l_1,\cdots,\hat{\l_n},\cdots,\l_{n}}(p_1, p_2,p_4, \cdots, p_{n-1}, p_{n+1}))]
	\\&+ [\a^{n-1}{ (p_1)}_{\l_1}\\& \N_{\M}(f_{\l_2,\l_3,\cdots,\l_{n}}(\N_{h} (p_2), \N_{h} (p_3), \cdots, \N_{h} (p_{n+1})) \\&- (\N_{\M} \circ f)_{\l_2,\l_3,\cdots,\l_{n}}(p_2, \N_{h} (p_3), \cdots, \N_{h} (p_{n+1}))
    \\&- (\N_{\M} \circ f)_{\l_2,\l_3,\cdots,\l_{n}}(\N_{h} (p_2), p_3, \N_{h} (p_4), \cdots, \N_{h} (p_{n+1})) \\&- \cdots - (\N_{\M} \circ f)_{\l_2,\l_3,\cdots,\l_{n}}(\N_{h} (p_2), \N_{h} (p_3), \cdots, \N_{h} (p_{n}), p_{n+1})
\\&+ \N_{\M}^2(f_{\l_2,\l_3,\cdots,\l_{n}}(p_2, p_3, \cdots, p_{n+1})))]
\\& - [\a^{n-1}{p_2}_{\l_2} \\& \N_{\M} f_{\l_1,\l_3,\cdots,\l_{n}}(\N_{h} (p_1), \N_{h} (p_3), \cdots, \N_{h} (p_{n+1})) \\&- (\N_{\M} \circ f)_{\l_1,\l_3,\cdots,\l_{n}}(p_1, \N_{h} (p_3), \cdots, \N_{h} (p_{n+1}))
\\& - (\N_{\M} \circ f)_{\l_1,\l_3,\cdots,\l_{n}}(\N_{h} (p_1), p_3, \N_{h} (p_4), \cdots, \N_{h} (p_{n+1})) \\&- \cdots - (\N_{\M} \circ f)_{\l_1,\l_3,\cdots,\l_{n}}(\N_{h} (p_1), \N_{h} (p_3), \cdots, \N_{h} (p_{n}), p_{n+1})
\\&+ \N_{\M}^2(f_{\l_1,\l_3,\cdots,\l_{n}}(p_1, p_3, \cdots, p_{n+1}))] \\&
+ [\a^{n-1}{ p_3}_{\l_3 }
\\&\N_{\M} f_{\l_1,\l_2,\l_4\cdots,\l_{n}}(\N_{h} (p_1), \N_{h} (p_2),\N_{h} (p_4) \cdots, \N_{h} (p_{n+1})) \\&- (\N_{\M} \circ f)_{\l_1,\cdots,\hat{\l_3},\cdots,\l_{n}}(p_1, \N_{h} (p_2),\hat{\N_{h} (p_3)}, \cdots, \N_{h} (p_{n+1}))
\\&- (\N_{\M} \circ f)_{\l_1,\cdots,\hat{\l_3},\cdots,\l_{n}}(\N_{h} (p_1), p_2, \hat {\N_{h} (p_3)}, \cdots, \N_{h} (p_n))  \\&- \cdots- (\N_{\M} \circ f)_{\l_1,\cdots,\hat{\l_3},\cdots,\l_{n}}(\N_{h} (p_1), \N_{h} (p_2), \cdots, \N_{h} (p_{n}), p_{n+1})
\\&+ \N_{\M}^2(f_{\l_1,\cdots,\hat{\l_3},\cdots,\l_{n}}(p_1, p_2, p_4, \cdots, p_{n+1}))]\\& 
- \cdots+ (-1)^{n+1}[\a^{n-1}{p_n}_{\l_n}
\\&\N_{\M} f_{\l_1,\l_2,\cdots,\l_{n-1}}(\N_{h} (p_1), \N_{h} (p_2), \cdots, \N_{h} (p_{n-1}),\N_{h} (p_{n+1}))\\& - (\N_{\M} \circ f)_{\l_1,\cdots,\hat{\l_n},\cdots,\l_{n}}(p_1, \N_{h} (p_2),\hat{\N_{h} (p_n)}, \cdots, \N_{h} (p_{n+1}))
\\&- (\N_{\M} \circ f)_{\l_1,\cdots,\hat{\l_n},\cdots,\l_{n}}(\N_{h} (p_1), p_2, \hat {\N_{h} (p_n)}, \cdots, \N_{h} (p_{n+1})) \\&- \cdots - (\N_{\M} \circ f)_{\l_1,\cdots,\hat{\l_n},\cdots,\l_{n}}(\N_{h} (p_1), \N_{h} (p_2), \cdots, \N_{h} (p_{n-1}), p_{n+1})
\\&+ \N_{\M}^2(f_{\l_1,\cdots,\hat{\l_n},\cdots,\l_{n}}(p_1, p_2,p_4, \cdots, p_{n-1}, p_{n+1}))]
\\& -\N_{\M} [\a^{n-1}{ p_1}_{\l_1} \\&(f_{\l_2,\l_3,\cdots,\l_{n}}(\N_{h} (p_2), \N_{h} (p_3), \cdots, \N_{h} (p_{n+1})) \\&- (\N_{\M} \circ f)_{\l_2,\l_3,\cdots,\l_{n}}(p_2, \N_{h} (p_3), \cdots, \N_{h} (p_{n+1}))\\&
- (\N_{\M} \circ f)_{\l_2,\l_3,\cdots,\l_{n}}(\N_{h} (p_2), p_3, \N_{h} (p_4), \cdots, \N_{h} (p_{n+1})) \\&- \cdots - (\N_{\M} \circ f)_{\l_2,\l_3,\cdots,\l_{n}}(\N_{h} (p_2), \N_{h} (p_3), \cdots, \N_{h} (p_{n}), p_{n+1})
\\&+ \N_{\M}^2(f_{\l_2,\l_3,\cdots,\l_{n}}(p_2, p_3, \cdots, p_{n+1})))]\\&
+\N_{\M} [\a^{n-1}{p_2}_{\l_2} \\&f_{\l_1,\l_3,\cdots,\l_{n}}(\N_{h} (p_1), \N_{h} (p_3), \cdots, \N_{h} (p_{n+1})) \\&- (\N_{\M} \circ f)_{\l_1,\l_3,\cdots,\l_{n}}(p_1, \N_{h} (p_3), \cdots, \N_{h} (p_{n+1}))\\&
- (\N_{\M} \circ f)_{\l_1,\l_3,\cdots,\l_{n}}(\N_{h} (p_1), p_3, \N_{h} (p_4), \cdots, \N_{h} (p_{n+1})) \\&- \cdots - (\N_{\M} \circ f)_{\l_1,\l_3,\cdots,\l_{n}}(\N_{h} (p_1), \N_{h} (p_3), \cdots, \N_{h} (p_{n}), p_{n+1})
\\&+ \N_{\M}^2(f_{\l_1,\l_3,\cdots,\l_{n}}(p_1, p_3, \cdots, p_{n+1}))]\\&
- \N_{\M}[\a^{n-1}{ p_3}_{\l_3 }\\& f_{\l_1,\l_2,\l_4\cdots,\l_{n}}(\N_{h} (p_1), \N_{h} (p_2),\N_{h} (p_4) \cdots, \N_{h} (p_{n+1}))\\& - (\N_{\M} \circ f)_{\l_1,\cdots,\hat{\l_3},\cdots,\l_{n}}(p_1, \N_{h} (p_2),\hat{\N_{h} (p_3)}, \cdots, \N_{h} (p_{n+1}))\\&
- (\N_{\M} \circ f)_{\l_1,\cdots,\hat{\l_3},\cdots,\l_{n}}(\N_{h} (p_1), p_2, \hat {\N_{h} (p_3)}, \cdots, \N_{h} (p_n)) \\&- \cdots 
- (\N_{\M} \circ f)_{\l_1,\cdots,\hat{\l_3},\cdots,\l_{n}}(\N_{h} (p_1), \N_{h} (p_2), \cdots, \N_{h} (p_{n}), p_{n+1})
\\&+ \N_{\M}^2(f_{\l_1,\cdots,\hat{\l_3},\cdots,\l_{n}}(p_1, p_2,p_4, \cdots, p_{n+1}))]\\&
- \cdots
 + (-1)^{n+1}\N_{\M} [\a^{n-1}{p_n}_{\l_n}\\& f_{\l_1,\l_2,\cdots,\l_{n-1}}(\N_{h} (p_1), \N_{h} (p_2), \cdots, \N_{h} (p_{n-1}),\N_{h} (p_{n+1})) \\&- (\N_{\M} \circ f)_{\l_1,\cdots,\hat{\l_n},\cdots,\l_{n}}(p_1, \N_{h} (p_2),\hat{\N_{h} (p_n)}, \cdots, \N_{h} (p_{n+1}))\\&
- (\N_{\M} \circ f)_{\l_1,\cdots,\hat{\l_n},\cdots,\l_{n}}(\N_{h} (p_1), p_2, \hat {\N_{h} (p_n)}, \cdots, \N_{h} (p_{n+1})) \\&- \cdots
 - (\N_{\M} \circ f)_{\l_1,\cdots,\hat{\l_n},\cdots,\l_{n}}(\N_{h} (p_1), \N_{h} (p_2), \cdots, \N_{h} (p_{n-1}), p_{n+1})\\&+ \N_{\M}^2(f_{\l_1,\cdots,\hat{\l_n},\cdots,\l_{n}}(p_1, p_2,p_4, \cdots, p_{n-1}, p_{n+1}))] \\&+(-1)^{n+1}[\N_{\M}(f_{\l_1,\l_2,\cdots,\l_{n-1}}(\N_{h} (p_1), \N_{h} (p_2), \cdots, \N_{h} (p_n)) \\&- (\N_{\M} \circ f)_{\l_1,\l_2,\cdots,\l_{n-1}}(p_1, \N_{h} (p_2), \cdots, \N_{h} (p_n))
\\&- (\N_{\M} \circ f)_{\l_1,\l_2,\cdots,\l_{n-1}}(\N_{h} (p_1), p_2, \N_{h} (p_3), \cdots, \N_{h} (p_n)) \\&- \cdots
- (\N_{\M} \circ f)_{\l_1,\l_2,\cdots,\l_{n-1}}(\N_{h} (p_1), \N_{h} (p_2), \cdots, \N_{h} (p_{n-1}), p_n)
\\&+ \N_{\M}^2(f_{\l_1,\l_2,\cdots,\l_{n-1}}(p_1, p_2, \cdots, p_n)))_{\l_1+\l_2+\cdots+\l_n} \a^{n-1}(p_{n+1})]\\&+(-1)^{n+1}[(f_{\l_1,\l_2,\cdots,\l_{n-1}}(\N_{h} (p_1), \N_{h} (p_2), \cdots, \N_{h} (p_n)) \\&- (\N_{\M} \circ f)_{\l_1,\l_2,\cdots,\l_{n-1}}(p_1, \N_{h} (p_2), \cdots, \N_{h} (p_n))\\&
- (\N_{\M} \circ f)_{\l_1,\l_2,\cdots,\l_{n-1}}(\N_{h} (p_1), p_2, \N_{h} (p_3), \cdots, \N_{h} (p_n))\\& - \cdots - (\N_{\M} \circ f)_{\l_1,\l_2,\cdots,\l_{n-1}}(\N_{h} (p_1), \N_{h} (p_2), \cdots, \N_{h} (p_{n-1}), p_n)
\\&+ \N_{\M}^2(f_{\l_1,\l_2,\cdots,\l_{n-1}}(p_1, p_2, \cdots, p_n)))_{\l_1+\l_2+\cdots+\l_n}\N_{h}\a^{n-1}(p_{n+1})]\\&- (-1)^{n+1}\N_{\M} \circ [(f_{\l_1,\l_2,\cdots,\l_{n-1}}(\N_{h} (p_1), \N_{h} (p_2), \cdots, \N_{h} (p_n)) \\&- (\N_{\M} \circ f)_{\l_1,\l_2,\cdots,\l_{n-1}}(p_1, \N_{h} (p_2), \cdots, \N_{h} (p_n))\\&
- (\N_{\M} \circ f)_{\l_1,\l_2,\cdots,\l_{n-1}}(\N_{h} (p_1), p_2, \N_{h} (p_3), \cdots, \N_{h} (p_n)) \\&- \cdots
 - (\N_{\M} \circ f)_{\l_1,\l_2,\cdots,\l_{n-1}}(\N_{h} (p_1), \N_{h} (p_2), \cdots, \N_{h} (p_{n-1}), p_n)
\\&+ \N_{\M}^2(f_{\l_1,\l_2,\cdots,\l_{n-1}}(p_1, p_2, \cdots, p_n)))_{\l_1+\l_2+\cdots+\l_n}\a^{n-1}(p_{n+1})]
\\& - \sum_{j=2}^{n+1}(f_{\l_1+\l_j,\hat{\l_1},\l_2,\cdots,\hat{\l_j}, \cdots,\l_{n}}\\&(\N_{h}([{\N_{h}(p_1)}_{\l_1} p_j]+[{p_1}_{\l_1}\N_{h}(p_j)]-\N_{h}[{p_1}_{\l_1} {p_j}]), \hat{\N_{h} \a(p_1)}, \N_{h} \a(p_2), \cdots,\hat{\N_{h} \a(p_j)},\cdots, \N_{h} \a(p_{n+1}))
\\&+(\N_{\M} \circ f)_{\l_1+\l_j,\hat{\l_1},\l_2,\cdots,\hat{\l_j}, \cdots,\l_{n}}
\\&(([{\N_{h}(p_1)}_{\l_1} p_j]+[{p_1}_{\l_1}\N_{h}(p_j)]-\N_{h}[{p_1}_{\l_1}{p_j}]), \hat{\N_{h} \a(p_1)}, \N_{h} \a(p_2), \cdots,\hat{\N_{h} \a(p_j)},\cdots, \N_{h} \a(p_{n+1}))
\\&+(\N_{\M} \circ f)_{\l_1+\l_j,\hat{\l_1},\l_2,\cdots,\hat{\l_j}, \cdots,\l_{n}}\\&(\N_{h}([{\N_{h}(p_1)}_{\l_1} p_j]+[{p_1}_{\l_1}\N_{h}(p_j)]-\N_{h}[{ p_1}_{\l_1}{p_j}]), \hat{\N_{h} \a(p_1)}, \a(p_2), \cdots,\hat{\N_{h} \a(p_j)},\cdots, \N_{h} \a(p_{n+1}))\\& - \cdots 
- (\N_{\M} \circ f)_{\l_1+\l_j,\hat{\l_1},\l_2,\cdots,\hat{\l_j}, \cdots,\l_{n}}
\\&(\N_{h}([{\N_{h}(p_1)}_{\l_1}p_j]+[{p_1}_{\l_1}\N_{h}(p_j)]-\N_{h}[{p_1}_{\l_1}{ p_j}]), \hat{\N_{h} \a(p_1)}, \N_{h} \a(p_2), \cdots,\hat{\N_{h}\a(p_j)},\cdots, \a(p_{n+1}))
\\&- \N_{\M}^2(f_{\l_1+\l_j,\hat{\l_1},\l_2,\cdots,\hat{\l_j}, \cdots,\l_{n}}
\\&([{\N_{h}(p_1)}_{\l_1} p_j]+[{p_1}_{\l_1}\N_{h}(p_j)]-\N_{h}[{ p_1}_{\l_1}{ p_j}],\hat{\a(p_1)}, \a(p_2), \cdots,\hat{\a(p_j)},\cdots, \a(p_{n+1}))))
\\&+ \sum_{j=3}^{n+1}(f_{\l_2+\l_j,\l_1,\hat{\l_2},\cdots,\hat{\l_j}, \cdots,\l_{n}}
\\&(\N_{h}([{\N_{h}(p_2)}_{\l_2}p_j]+[{p_2}_{\l_2}\N_{h}(p_j)]-\N_{h}[{p_2}_{\l_2}{p_j}]), \N_{h} \a(p_1), \hat{\N_{h} \a(p_2)}, \N_{h} \a(p_3), \cdots,\hat{\N_{h} \a(p_j)},\cdots, \N_{h} \a(p_{n+1}))
\\&-(\N_{\M} \circ f)_{\l_2+\l_j,\l_1,\hat{\l_2},\cdots,\hat{\l_j}, \cdots,\l_{n}}
\\&(([{\N_{h}(p_2)}_{\l_2}p_j]+[ {p_2}_{\l_2}\N_{h}(p_j)]-\N_{h}[{p_2}_{\l_2}{p_j}]), \N_{h} \a(p_1),\hat{\N_{h}\a(p_2)}, \cdots,\hat{\N_{h} \a(p_j)},\cdots, \N_{h} \a(p_{n+1}))
\\&-(\N_{\M} \circ f)_{\l_2+\l_j,\l_1,\hat{\l_2},\cdots,\hat{\l_j}, \cdots,\l_{n}}
\\&(\N_{h}([{\N_{h}(\a(p_2))}_{\l_2}p_j]+[{p_2}_{\l_2}\N_{h}(p_j)]-\N_{h}[{ p_2}_{\l_2}{p_j}]),\N_{h}\a(p_1), \hat{\N_{h}\a(p_2)}, \a(p_3), \cdots,\hat{\N_{h} \a(p_j)},\cdots, \N_{h} \a(p_{n+1})) \\&- \cdots
 - (\N_{\M} \circ f)_{\l_2+\l_j,\l_1,\hat{\l_2},\cdots,\hat{\l_j}, \cdots,\l_{n}}
\\&(\N_{h}([{\N_{h}(p_2)}_{\l_2}\p_j]+[{p_2}_{\l_2}\N_{h}(p_j)]-\N_{h}[{p_2}_{\l_2}{ p_j}]), \N_{h} \a(p_1), \hat{\N_{h} \a(p_2)}, \N_{h} \a(p_3), \cdots,\hat{\N_{h} \a(p_j)},\cdots, \a(p_{n+1})) \\&+ \N_{\M}^2(f_{\l_2+\l_j,\l_1,\hat{\l_2},\cdots,\hat{\l_j}, \cdots,\l_{n}}
\\&([{\N_{h}(p_2)}_{\l_2}p_j]+[{ p_2}_{\l_2}\N_{h}(p_j)]-\N_{h}[{p_2}_{\l_2}{p_j}],\a(p_1),\hat{\a(p_2)}, \a(p_3), \cdots,\hat{ \a(p_j)},\cdots, \a(p_{n+1}))))\\&- \sum_{j=4}^{n+1}(f_{\l_3+\l_j,\l_1,\l_2, \hat{\l_3},\cdots,\hat{\l_j}, \cdots,\l_{n}}\\&(\N_{h}([{\N_{h}(p_3)}_{\l_3} p_j]+[{p_3}_{\l_3}\N_{h}(p_j)]-\N_{h}[{p_3}_{\l_3} {p_j}]), \N_{h} \a(p_1), \hat{\N_{h} \a(p_3)}, \N_{h}\a(p_2), \cdots,\hat{\N_{h} \a(p_j)},\cdots, \N_{h} \a(p_{n+1}))\\&+(\N_{\M} \circ f)_{\l_3+\l_j,\l_1,\l_2, \hat{\l_3},\cdots,\hat{\l_j}, \cdots,\l_{n}}\\&(([{\N_{h}(p_3)}_{\l_3} p_j]+[{p_3}_{\l_3}\N_{h}(p_j)]-\N_{h}[{p_3}_{\l_3}{p_j}]), \N_{h} \a(p_1),\N_{h}\a(p_2), \hat{\N_{h} \a(p_3)}, \cdots,\hat{\N_{h} \a(p_j)},\cdots, \N_{h} \a(p_{n+1}))\\&
+(\N_{\M} \circ f)_{\l_3+\l_j,\l_1,\l_2, \hat{\l_3},\cdots,\hat{\l_j}, \cdots,\l_{n}}\\&(\N_{h}([{\N_{h}(p_3)}_{\l_3} p_j]+[ {p_3}_{\l_3}\N_{h}(p_j)]-\N_{h}[{p_3}_{\l_3}{p_j}]),\N_{h}\a(p_1), \N_{h}\a(p_2), \hat{\N_{h} \a(p_3)}, \a(p_4), \cdots,\hat{\N_{h} \a(p_j)},\cdots, \N_{h} \a(p_{n+1})) \\&- \cdots - (\N_{\M} \circ f)_{\l_3+\l_j,\l_1,\l_2, \hat{\l_3},\cdots,\hat{\l_j}, \cdots,\l_{n}}\\&(\N_{h}([{\N_{h}(p_3)}_{\l_3} p_j]+[{p_3}_{\l_3}\N_{h}(p_j)]-\N_{h}[{p_3}_{\l_3}{p_j}]), \N_{h} \a(p_1), \N_{h} \a(p_2), \hat{\N_{h} \a(p_3)}, \N_{h} \a(p_4), \cdots,\hat{\N_{h} \a(p_j)},\cdots, \a(p_{n+1}))\\&- \N_{\M}^2(f_{\l_3+\l_j,\l_1,\l_2, \hat{\l_3},\cdots,\hat{\l_j}, \cdots,\l_{n}}\\&([{\N_{h}(p_3)}_{\l_3}p_j]+[{p_3}_{\l_3}\N_{h}(p_j)]-\N_{h}[{p_3}_{\l_3}{ p_j}],\a(p_1),\a(p_2),\hat{\a(p_3)}, \a(p_4), \cdots,\hat{\a(p_j)},\cdots, \a(p_{n+1}))))
\\&\cdots
\\&+ (-1)^n\sum_{j=n+1}^{n+1}(f_{\l_n+\l_j,\l_1,\l_2,\cdots,\hat{\l_j}, \cdots,\l_{n-1}}(\N_{h}([{\N_{h}(p_n)}_{\l_n}p_j]+[{p_n}_{\l_n}\N_{h}(p_j)]-\N_{h}[ {p_n}_{\l_n} {p_j}]), \N_{h} \a(p_1),\\&\N_{h}\a(p_2),  \N_{h} \a(p_3), \cdots,\hat{\N_{h}\a(p_j)},\cdots, \hat{\N_{h} \a(p_n)}, \N_{h} \a(p_{n+1}))
\\&-(\N_{\M} \circ f)_{\l_n+\l_j,\l_1,\l_2,\cdots,\hat{\l_j}, \cdots,\l_{n-1}}(([{\N_{h}(p_n)}_{\l_n} p_j]+[ {p_n}_{\l_n}\N_{h}(p_j)]-\N_{h}[{p_n}_{\l_n}{p_j}]), \N_{h} \a(p_1), \cdots,
\\&\hat{\N_{h} \a(p_j)},\cdots,\hat{\N_{h} \a(p_n)}, \N_{h} \a(p_{n+1}))
\\&-(\N_{\M} \circ f)_{\l_n+\l_j,\l_1,\l_2,\cdots,\hat{\l_j}, \cdots,\l_{n-1}}(\N_{h}([{\N_{h}(p_n)}_{\l_n} p_j]+[{p_n}_{\l_n}\N_{h}(p_j)]-\N_{h}[{p_n}_{\l_n}{ p_j}]), \N_{h} \a(p_1), \N_{h} \a(p_2),
\\& \a(p_3), \cdots,\hat{\N_{h} \a(p_j)},\cdots,\hat{\N_{h} \a(p_n)}, \N_{h} \a(p_{n+1})) \\&- \cdots
 - (\N_{\M} \circ f)_{\l_n+\l_j,\l_1,\l_2,\cdots,\hat{\l_j}, \cdots,\l_{n-1}}(\N_{h}([{\N_{h}(p_n)}_{\l_n}p_j]+ [{p_n}_{\l_n}\N_{h}(p_j)]-\N_{h}[{p_n}_{\l_n}{p_j}]), \N_{h} \a(p_1), \N_{h} \a(p_2),
\\&\N_{h} \a(p_3), \cdots,\hat{\N \a(p_j)},\cdots,\hat{\N_{h} \a(p_n)}, \a(p_{n+1}))
\\&+ \N_{\M}^2(f_{\l_n+\l_j,\l_1,\l_2,\cdots,\hat{\l_j}, \cdots,\l_{n-1}}([{\N_{h}(p_n)}_{\l_n} p_j]+[{p_n}_{\l_n}\N_{h}(p_j)]-\N_{h}[{p_n}_{\l_n}{p_j}],\a(p_1),\a(p_2),
\\& \a(p_3), \cdots,\hat{\a(p_j)},\cdots,\hat{\a(p_n)}, \a(p_{n+1})))).
\end{align*} By using the Definition \eqref{defrepNIJ}, we obtain the desired result that verifies \begin{align*}
    \phi^{n+1}(\delta_{HomL}^n(f))(p_1, p_2, p_3, \cdots , p_{n+1}) = \partial^n_{HN}(\phi^n(f))(p_1, p_2, p_3, \cdots , p_{n+1}).
\end{align*}
\end{proof}
The above Lemma can be expressed by the following commutative diagram:\\
\begin{tikzcd}
C^1_{HomL}(\L,\M) \arrow[r,"\delta_{HomL}^1"] \arrow[d, "\phi^1"'] & C^2_{HomL}(\L,\M) \arrow[r,"\delta_{HomL}^2"] \arrow[d, "\phi^2"'] & \dots \arrow[r,"\delta_{HomL}^{n-1}"] & C^n_{HomL}(\L,\M) \arrow[r,"\delta_{HomL}^{n}"] \arrow[d, "\phi^n"'] & C^{n+1}_{HomL}(\L,\M) \arrow[d, "\phi^{n+1}"'] \dots \\
C^1_{HN}(\L,\M) \arrow[r,"\partial_{HN}^1"] & C^2_{HN}(\L,\M) \arrow[r,"\partial_{HN}^2"] & \dots \arrow[r,"\partial_{HN}^{n-1}"] & C^n_{HN}(\L,\M) \arrow[r,"\partial_{HN}^n"] & C^{n+1}_{HN}(\L,\M)\dots .
\end{tikzcd}
\par Next, we unify the cochain complex of Hom-Leibniz conformal algebra and the cochain complex of Hom-Nijenhuis operator and form a new cochain complex of Hom-Nijenhuis Leibniz conformal algebra. 
\begin{defn}\label{def3.3}
	Let $\L_{\N_h}$ be a Hom-Nijenhuis Leibniz conformal algebra with a representation $(\M, l, r,\b, \N_{\M})$. Let's define
	\begin{align}
	    C^0_{HNLA}(\L,\M) &:= C^0_{HomL}(\L,\M)\\
	     C^n_{HNLA}(\L,\M) &:= C^n_{HomL}(\L,\M) \oplus C^{n-1}_{HN}(\L,\M), \quad \forall\quad  n \geq 1,
	 \end{align}
	and a coboundary map $d_{HNLA}^n: C^n_{HNLA}(\L,\M) \rightarrow C^{n+1}_{HNLA}(\L,\M)$, for any $f \in C^n_{HomL}(\L,\M)$, $g \in C^{n-1}_{HN}(\L,\M)$ by
	\begin{align}
	    d_{HNLA}^n(f, g) = (\delta_{HomL}^n(f), -\partial_{HN}^{n-1}(g) - \phi^n(f)).
	\end{align}
\end{defn}
\begin{thm}\label{thm3.4}
	The map $d_{HNLA}^n: C^n_{HNLA}(\L,\M) \rightarrow C^{n+1}_{HNLA}(\L,\M)$, satisfies $d_{HNLA}^n \circ d_{HNLA}^{n+1} = 0$.
\end{thm}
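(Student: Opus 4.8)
The plan is to reduce the claim to three facts already in hand: that $\delta_{HomL}$ squares to zero, that $\partial_{HN}$ squares to zero, and the intertwining identity $\phi^{n+1}\circ\delta_{HomL}^n=\partial_{HN}^n\circ\phi^n$ of the preceding Lemma. Given a pair $(f,g)\in C^n_{HNLA}(\L,\M)=C^n_{HomL}(\L,\M)\oplus C^{n-1}_{HN}(\L,\M)$, I would first apply Definition \ref{def3.3} twice to get
\begin{align*}
d_{HNLA}^{n+1}\bigl(d_{HNLA}^n(f,g)\bigr)&=d_{HNLA}^{n+1}\bigl(\delta_{HomL}^n(f),\,-\partial_{HN}^{n-1}(g)-\phi^n(f)\bigr)\\
&=\Bigl(\delta_{HomL}^{n+1}\delta_{HomL}^n(f),\ -\partial_{HN}^n\bigl(-\partial_{HN}^{n-1}(g)-\phi^n(f)\bigr)-\phi^{n+1}\bigl(\delta_{HomL}^n(f)\bigr)\Bigr),
\end{align*}
and then argue that each of the two components vanishes separately.

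For the first component I would invoke that $\delta_{HomL}$ is the coboundary operator of the cohomology of the Hom-Leibniz conformal algebra $(\L,[\cdot_\l\cdot],\a)$ with coefficients in $(l,r,\M,\b)$, so that $\delta_{HomL}^{n+1}\circ\delta_{HomL}^n=0$; this is the same bookkeeping of the three sums in \eqref{coboundarymap} that is carried out in detail for $n=1$ in the theorem establishing $\delta_{HomL}^2\circ\delta_{HomL}^1=0$. For the second component, expanding by linearity of $\partial_{HN}^n$ gives
\begin{align*}
-\partial_{HN}^n\bigl(-\partial_{HN}^{n-1}(g)-\phi^n(f)\bigr)-\phi^{n+1}\bigl(\delta_{HomL}^n(f)\bigr)=\partial_{HN}^n\partial_{HN}^{n-1}(g)+\Bigl(\partial_{HN}^n\phi^n(f)-\phi^{n+1}\delta_{HomL}^n(f)\Bigr),
\end{align*}
and I would kill the first term using that $(C^\bullet_{HN}(\L,\M),\partial_{HN})$ is already a cochain complex — it is the coboundary of the induced Hom-Leibniz conformal algebra $(\L_{\N_h},[\cdot_\l\cdot]_{\N_h},\a)$ with the induced representation $(\M,l',r',\b,\N_{\M})$ of Propositions \ref{prop2.9} and \ref{prop2.12}, hence $\partial_{HN}^n\circ\partial_{HN}^{n-1}=0$ — and kill the bracketed difference by the Lemma. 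That finishes the second component and hence the whole statement, $d_{HNLA}^{n+1}\circ d_{HNLA}^n=0$.

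I do not expect a genuine obstacle: the substance is packaged in the Lemma and in the two square-zero statements, so the proof is just the bookkeeping displayed above together with those citations. The one point needing care is the sign in Definition \ref{def3.3}: the minus sign in front of both $\partial_{HN}^{n-1}(g)$ and $\phi^n(f)$ is exactly what makes $\partial_{HN}^n\phi^n(f)$ reappear with the sign needed to cancel $\phi^{n+1}\delta_{HomL}^n(f)$ via the Lemma, and any other choice would leave an uncancelled term. For completeness I would also note in passing that $\phi^n$ and $\partial_{HN}^n$ send cochains to cochains — they preserve the conformal sesqui-linearity \eqref{eq9} and the $\a$/$\b$-equivariance $f\circ\a^{\otimes\bullet}=\b\circ f$ — so that everything above indeed takes place inside the stated cochain groups $C^\bullet_{HNLA}(\L,\M)$; this is routine and mirrors the checks already made for $\delta_{HomL}$ and $\partial_{HN}$.
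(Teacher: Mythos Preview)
Your proof is correct and follows exactly the paper's own argument: apply the definition of $d_{HNLA}$ twice, then kill the first component with $\delta_{HomL}^{n+1}\circ\delta_{HomL}^n=0$ and the second with $\partial_{HN}^n\circ\partial_{HN}^{n-1}=0$ together with the intertwining Lemma $\phi^{n+1}\circ\delta_{HomL}^n=\partial_{HN}^n\circ\phi^n$. Your additional remarks on signs and on $\phi^n$, $\partial_{HN}^n$ preserving the cochain conditions are sound but go slightly beyond what the paper records.
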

\begin{proof} For any $f \in C^n_{HomL}(\L,\M)$ and $g \in C^{n-1}_{HN}(\L,\M)$,
\begin{align*}
 d_{HNLA}^{n+1} \circ d_{HNLA}^n(f, g) &= d_{HNLA}^{n+1} \left(\delta_{HomL}^n(f), -\partial^{n-1}_{HN}(g) - \phi^n(f) \right) \\&
 = \left(\delta_{HomL}^{n+1}(\delta_{HomL}^n(f)), -\partial_{HN}^{n} \left(-\partial_{HN}^{n-1}(g) - \phi^n(f) \right) - \phi^{n+1}(\delta_{HomL}^n(f)) \right)\\&
 = \left(\delta_{HomL}^{n+1}(\delta_{HomL}^n(f)), \partial_{HN}^{n}\partial_{HN}^{n-1}(g) + \partial_{HN}^{n}\phi^n(f) - \phi^{n+1}(\delta_{HomL}^n(f)) \right)\\&
= \left(0, 0 \right).
\end{align*} \end{proof}From Definition \eqref{def3.3} and Theorem \eqref{thm3.4}, we see that $\{C^n_{HNLA}(\L,\M), d_{HNLA}^n\}$ forms a cochain complex of the Hom-Nijenhuis Leibniz conformal algebra $\L_{\N_h}$ with representation $(\M, l, r, \b, \N_{\M})$. Its corresponding cohomology is denoted by $H^n_{HNLA}(\L,\M)$ for $n>0$. The above cochain complex can be expressed in terms of a short exact sequence as follows:
\begin{center}
	\begin{tikzcd}
	0\arrow[r]& C^n_{HN}(\L,\M) \arrow[r]& C^n_{HNLA}(\L,\M) \arrow[r] & C^n_{HomL}(\L,\M) \arrow[r] & 0.
	\end{tikzcd} 
\end{center} 
\section{ Deformation of Hom-Nijenhuis-Leibniz conformal algebra}
In this section, we study the formal one-parameter deformation of Hom-Nijenhuis-Leibniz conformal algebra. To avoid difficulty in computation in this section, we denote the bracket $[\cdot_\l \cdot]_\L$ by $\{\cdot_\l \cdot\}$ and $\N_h$ by $\N$. 
\begin{defn} A one-parameter formal deformation of a Hom-Nijenhuis Leibniz conformal algebra $(\L_{\N_h}, \{\cdot_\l \cdot\},\a)$ is a pair of two power series $(\{\cdot_\l \cdot\}_t, \N_t)$
 
	 \begin{align*}
	     \{\cdot_\l \cdot\}_t &= \sum_{i=0}^{\infty} \{\cdot_\l \cdot\}_i t^i, \quad \{\cdot_\l \cdot\}_i \in C^2_{HomL}(\L,\L),\\
	 \N_t & \sum_{i=0}^{\infty} \N_i t^i, \quad \N_i \in C^1_{HN}(\L,\L),
	 \end{align*}
	 
	such that $(\L[[t]]_{\N_t}, \{\cdot_\l \cdot\}_t,\a)$ is a Hom-Nijenhuis-Leibniz conformal algebra with $(\{\cdot_\l \cdot\}_0, \N_0) = (\{\cdot_\l \cdot\}, \N)$, where $\L[[t]]$ which is the space of formal power series in $t$ with coefficients from $\L$, is a $\mathbb{C}[[t]]$ module, $\mathbb{C}$ being the ground field of $(\L_{\N}, \{\cdot_\l \cdot\},\a)$.
\end{defn}The above definition holds if and only if for any $p, q, r \in \L$, the following conditions are satisfied:
\begin{align*}\a\circ \N_t&=\N_t\circ \a\\
 \{\a(p)_\l \{q_\m r\}_t\}_t&= {\{\{p_\l q\}_t}_{\l+\m} \a(r) \}_t+ \{\a(q)_\m \{p_\l r\}_t\}_t, \\
\{\N_t(p)_\l \N_t(q)\}_t &= \N_t \left(\{p_\l \N_t(q)\}_t + \{\N_t(p)_\l q\}_t- \N_t(\{p_\l q\}_t) \right).
\end{align*}
Expanding the above equations and equating the coefficients of $t^n$ from both sides we have
\begin{align}\label{eqhom}
\sum_{i+j=n} \{\a(p)_\l \{q_\m r\}_j\}_i&= \sum_{i+j=n} {\{\{p_\l q\}_j}_{\l+\m} \a(r) \}_i+ \{\a(q)_\m \{p_\l r\}_j\}_i, \\
\label{eqnij}\sum_{i+j+k=n}\{\N_j(p)_\l \N_k(q)\}_i &= \sum_{i+j+k=n}\N_i \left(\{p_\l \N_j(q)\}_k + \{\N_j(p)_\l q\}_k- \N_j(\{p_\l q\}_k) \right).
\end{align} Note that when $n = 0$, the above conditions are exactly the conditions in the definitions of Hom-Leibniz conformal algebra and the Hom-Nijenhuis operator see Definition \eqref{defdef} and \eqref{Nijenhuis}.
\begin{prop} Let $(\{\cdot_\l \cdot\}_t, \N_t, \a)$ be a one-parameter deformation of a Hom-Nijenhuis-Leibniz conformal algebra $(\L_{\N}, \{\cdot_\l \cdot\},\a)$. Then $(\{\cdot_\l \cdot\}_1, \N_1)$ is a $2$-cocycle in the cochain complex $\{C^n_{HNLA}(\L,\L), d_{HNLA}^n\}$ of Hom-Nijenhuis Leibniz conformal algebra.\end{prop}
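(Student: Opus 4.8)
The strategy is to extract the first-order terms from the two families of deformation equations~\eqref{eqhom} and~\eqref{eqnij} (the cases $n=1$) and recognise them as the two components of the cocycle condition $d_{HNLA}^2(\{\cdot_\l\cdot\}_1,\N_1)=0$ in the complex $\{C^n_{HNLA}(\L,\L),d_{HNLA}^n\}$. Recall from Definition~\ref{def3.3} that an element of $C^2_{HNLA}(\L,\L)=C^2_{HomL}(\L,\L)\oplus C^1_{HN}(\L,\L)$ is a pair $(f,g)$, and
\[
d_{HNLA}^2(f,g)=\bigl(\delta_{HomL}^2(f),\,-\partial_{HN}^1(g)-\phi^2(f)\bigr).
\]
So I must show two things: first, $\delta_{HomL}^2(\{\cdot_\l\cdot\}_1)=0$; second, $\partial_{HN}^1(\N_1)+\phi^2(\{\cdot_\l\cdot\}_1)=0$.

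\emph{First component.} Take $n=1$ in~\eqref{eqhom}. The left side splits as $\{\a(p)_\l\{q_\m r\}\}_1+\{\a(p)_\l\{q_\m r\}_1\}$ (writing $\{\cdot_\l\cdot\}=\{\cdot_\l\cdot\}_0$), and similarly for the right side; moving the three terms involving $\{\cdot_\l\cdot\}_1$ applied \emph{inside} the $0$-order bracket to one side and the terms where $\{\cdot_\l\cdot\}_1$ is the \emph{outer} bracket to the other, one obtains exactly the expression $(\delta_{HomL}^2\{\cdot_\l\cdot\}_1)_{\l,\m}(p,q,r)=0$, using the formula~\eqref{eqdeltahom2} with the representation given by $l(p)_\l m=[p_\l m]=\{p_\l m\}$ and $r(m)_\l p=\{m_\l p\}$ (the adjoint representation). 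This is the standard ``first-order deformation of a (Hom-)Leibniz conformal algebra is a $2$-cocycle'' computation; it is bookkeeping once the signs in~\eqref{eqdeltahom2} are matched against~\eqref{eqhom}.

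\emph{Second component.} Take $n=1$ in~\eqref{eqnij}, where the sum runs over $i+j+k=1$. The terms with $i=1$ (so $j=k=0$) give $\N_1$ applied to the expression $\{p_\l\N(q)\}+\{\N(p)_\l q\}-\N(\{p_\l q\})$, i.e. $\N_1$ composed with the bracket $[\cdot_\l\cdot]_{\N}$ of Proposition~\ref{prop2.9}; the terms with $j=1$ or $k=1$ give the $\N_1$-inner contributions on the left ($\{\N_1(p)_\l\N(q)\}+\{\N(p)_\l\N_1(q)\}$) and on the right. Regrouping these is exactly the definition of $\partial_{HN}^1(\N_1)$ for the induced representation $(\M,l',r',\b,\N_\M)$ applied to the adjoint case, \emph{plus} the term $\phi^2(\{\cdot_\l\cdot\}_1)$ which collects precisely the contributions in which $\{\cdot_\l\cdot\}_1$ appears wrapped in the various $\N$'s — matching the explicit formula for $\phi^2$ displayed just before the Lemma. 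So the $n=1$ instance of~\eqref{eqnij} rearranges to $\partial_{HN}^1(\N_1)+\phi^2(\{\cdot_\l\cdot\}_1)=0$.

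Combining, $d_{HNLA}^2(\{\cdot_\l\cdot\}_1,\N_1)=(0,0)$, so $(\{\cdot_\l\cdot\}_1,\N_1)$ is a $2$-cocycle. The main obstacle — really the only nontrivial point — is the second component: one must carefully separate the three-fold sum $i+j+k=1$ into the pieces that become $\partial_{HN}^1(\N_1)$ and the pieces that become $\phi^2(\{\cdot_\l\cdot\}_1)$, and check that the signs in Definition~\ref{def3.3} (the minus sign in front of $\partial_{HN}^{n-1}(g)$ and $\phi^n(f)$) are consistent with the signs produced by expanding the deformed Nijenhuis identity. I would write out the $n=1$ expansion of~\eqref{eqnij} term by term, substitute the definitions of $l',r'$ from Proposition~\ref{prop2.12}, and verify the identification directly; everything else is immediate.
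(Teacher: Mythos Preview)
Your proposal is correct and follows essentially the same approach as the paper: both split the cocycle condition $d_{HNLA}^2(\{\cdot_\l\cdot\}_1,\N_1)=0$ into its two components, obtain $\delta_{HomL}^2(\{\cdot_\l\cdot\}_1)=0$ from the $n=1$ instance of~\eqref{eqhom}, and obtain $\partial_{HN}^1(\N_1)+\phi^2(\{\cdot_\l\cdot\}_1)=0$ by expanding and regrouping the $n=1$ instance of~\eqref{eqnij}. The paper's version simply writes out the second rearrangement explicitly rather than describing it, but the structure and ingredients are identical.
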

\begin{proof}
Note that the pair $(\{\cdot_\l \cdot\}_1, \N_1)$ is $2$ cocycle if $d_{HNLA}^2(\{\cdot_\l \cdot\}_1, \N_1)=0$. By expanding the L.H.S of this expression we obtain \begin{align}\label{eq2cocycle}
	d_{HNLA}^2(\{\cdot_\l \cdot\}_1, \N_1)= (\delta_{HomL}^2(\{\cdot_\l \cdot\}_1), -\partial_{HN}^{1}(\N_1)-\phi^2(\{\cdot_\l \cdot\}_1)). \end{align} First consider Eq. \eqref{eqhom} for $n=1$, we have
\begin{align*}
\{\a(p)_\l \{q_\m r\}_1\} + \{\a(p)_\l \{q_\m r\}\}_1=& \{{\{p_\l q\}_1}_{\l+\m} \a(r)\} + \{\{p_\l q\}_{\l+\m} \a(r)\}_1 + \{\a(q)_\m \{p_\l r\}\}_1 + \{\a(q)_\m \{p_\l r\}_1\}.
\end{align*}
By Eq. \eqref{eqdeltahom2} above expression is equivalent to $)= \delta_{HomL}^2(\{\cdot_\l \cdot\}_1)(p, q, r)\in C^2_{HomL}(\L,\L)$.
\newline Now, consider Eq \eqref{eqnij} for $n = 1$, we have
\begin{align*}&\{\N p_\l \N q\}_1 + \{\N_1p_\l \N q\} + \{\N p_\l \N_1q\} \\
&= \N_1(\{\N p_\l q\}) + \N(\{\N p_\l q\}_1) + \N(\{{\N_1p}_\l q\}) \\
	&\quad + \N_1(\{p_\l \N q\}) + \N(\{p_\l \N q\}_1) + \N(\{p_\l \N_1q\}) \\
	&\quad - \N_1 \N(\{p_\l q\}) - \N^2(\{p_\l q\}_1)- \N \N_1(\{p_\l q\}).
	\end{align*}Re-arranging above equation
 	\begin{align*} &\N \N_1(\{p_\l q\})+ \{\N p_\l \N q\}_1- \N(\{\N p_\l q\}_1) - \N(\{p_\l \N q\}_1) + \N^2(\{p_\l q\}_1)\\=& - \N \N_1(\{p_\l q\}) -\{\N_1p_\l \N q\} - \{\N p_\l \N_1q\}+ \N_1(\{\N p_\l q\})\\&+ \N(\{{\N_1p}_\l q\})+\N_1(\{p_\l \N q\})+\N(\{p_\l \N_1 q\}),
	\end{align*}or
		\begin{align*} &\N \N_1(\{p_\l q\})+ \{\N p_\l \N q\}_1- \N(\{\N p_\l q\}_1) - \N(\{p_\l q\}_1) + \N^2(\{p_\l q\}_1)-\{p_\l \N\N_1 q\}-\{\N\N_1 p_\l q\}\\=& - \N \N_1(\{p_\l q\}) -\{\N_1p_\l \N q\} - \{\N p_\l \N_1q\}+ \N_1(\{\N p_\l q\})+ \N(\{{\N_1p}_\l q\})\\&+\N_1(\{p_\l \N q\})+\N(\{p_\l \N_1 q\})-\{p_\l \N\N_1 q\}-\{\N\N_1 p_\l q\}.
	\end{align*}
Which implies that $\phi^2(\{\cdot_\l \cdot\}_1)(p,q)=-\partial_{HN}^{1}(\N_1)(p,q)$. 
	Hence, by Eq. \eqref{eq2cocycle} we obtain $d_{HNLA}^2(\mu_1, N_1) = 0$. This completes the proof.
	\end{proof}
\begin{defn}
	Let $(\{\cdot_\l \cdot\}_t, \N_t,\a)$ and $(\{\cdot_\l \cdot\}'_t, \N'_t,\a)$ be two deformations of a Hom-Nijenhuis Leibniz conformal algebra $(\L_{\N}, \{\cdot_\l \cdot\},\a)$. A formal isomorphism from $(\{\cdot_\l \cdot\}_t, \N_t,\a)$ to $(\{\cdot_\l \cdot\}'_t, \N'_t,\a)$ is a power series $\psi_t = \sum_{i=0}^\infty \psi_i t^i : \L[[t]] \to \L[[t]]$, where $\psi_i : \L\to \L$ are linear maps with $\psi_0 = \text{Id}_\L$ satisfying the following conditions:
 \begin{align*}
\a\N_t&=\N_t \a,\\ \psi_t \circ \{\cdot_\l \cdot\}'_t &= \{\cdot_\l \cdot\}_t \circ (\psi_t \otimes \psi_t), \\
	\psi_t \circ \N'_t &= \N_t \circ \psi_t.
	\end{align*}
\end{defn}
If there exists a formal isomorphism $\psi_t$ between two formal deformations $(\{\cdot_\l \cdot\}_t, \N_t,\a)$ and $(\{\cdot_\l \cdot\}'_t, \N'_t,\a)$, we say these formal deformations are equivalent. From the above definition, we also have
\begin{align*}
\sum_{i+j=n;i,j\geq 0} \psi_i(\{p_\l q\}'_j) = \sum_{i+j+k=n;i,j,k \geq 0} \{\psi_j(p)_\l \psi_k(q)\}_i, \quad p,q \in \L
\end{align*} and \begin{align*}
\sum_{i+j=n;i,j \geq 0} \psi_i \circ \N'_j = \sum_{i+j=n; i,j \geq 0} \N_i \circ \psi_j.
\end{align*}
For $ n=1 $ and $\psi_0=Id_\L$, we have
\begin{align*}
\psi_1(\{p_\l q\}') +\{p_\l q\}'_1 = \{\psi_1(p)_\l q\}+\{p_\l \psi_1(q)\}+\{p_\l q\}_1, \quad p,q \in \L
\end{align*} and \begin{align*}
 \psi_1 \circ \N' + \N'_1= \N \circ \psi_1+ \N_1 .
\end{align*} Rearranging above equations, we get \begin{align*}
\{p_\l q\}'_1-\{p_\l q\}_1 = - \psi_1(\{p_\l q\}')+ \{\psi_1(p)_\l q\}+\{p_\l \psi_1(q)\}, \quad p,q \in \L
\end{align*} and \begin{align*} \N'_1- \N_1= \N \circ \psi_1- \psi_1 \circ \N'.
\end{align*}This implies that $(\{p_\l q\}'_1, \N'_1)-(\{p_\l q\}_1,\N_1) = d^1_{HNLA}(\psi_1,0)\in C^1_{HNLA}(\L,\L)$. Thus, the infinitesimal of two equivalent formal deformations of Hom-Nijenhuis Leibniz conformal algebras are cohomologous. For more detail about infinitesimal see Definition (5.8) of \cite{AWMB}.
\begin{thm} A Hom-Nijenhuis Leibniz conformal algebra $(\L_{\N},\{\cdot_\l \cdot\},\a) $ is called rigid , if $H^n_{HNLA}(\L,\L)=0,$ for $ n=2$.
\end{thm}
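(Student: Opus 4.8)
The statement as worded is a \emph{definition}: it attaches the label ``rigid'' to any Hom-Nijenhuis Leibniz conformal algebra whose second cohomology group $H^2_{HNLA}(\L,\L)$ vanishes. A definition carries no logical content to be demonstrated, so strictly speaking there is nothing to prove. My plan is therefore only to confirm that the object being named is genuinely available from the preceding development, so that the terminology is well posed.

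The single point worth verifying is that ``$H^2_{HNLA}(\L,\L)$'' refers to an honestly defined group, and this is already secured by the material immediately above. Definition \eqref{def3.3} supplies the graded pieces $C^n_{HNLA}(\L,\L)$ together with the coboundary maps $d^n_{HNLA}$, and Theorem \eqref{thm3.4} establishes $d^{n}_{HNLA}\circ d^{n-1}_{HNLA}=0$; in particular the composite landing in degree $2$ vanishes. Hence $\{C^n_{HNLA}(\L,\L),\,d^n_{HNLA}\}$ is a bona fide cochain complex, the quotient $H^2_{HNLA}(\L,\L)=\ker d^2_{HNLA}/\operatorname{im} d^1_{HNLA}$ is well defined, and the condition $H^2_{HNLA}(\L,\L)=0$ is meaningful. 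With this observation the name introduced by the statement is legitimate, and no argument beyond invoking Theorem \eqref{thm3.4} is required.

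I would append one remark on intent, while stressing that it lies outside the definitional statement itself. The \emph{reason} for the name ``rigid'' is the substantive fact — a separate theorem, not part of this definition — that vanishing of $H^2_{HNLA}(\L,\L)$ forces every formal one-parameter deformation of $(\L_{\N},\{\cdot_\l\cdot\},\a)$ to be equivalent to the trivial one. Establishing that claim would combine the $2$-cocycle description from the preceding proposition with an inductive argument that successively removes the infinitesimal of a nontrivial deformation. Since the present statement merely \emph{defines} rigidity, that deformation-theoretic argument is not needed here, and the only genuine step is the well-posedness check above.
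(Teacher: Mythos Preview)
Your reading of the statement as a bare definition is understandable given its unfortunate phrasing, but it does not match what the paper actually does. The paper labels this a Theorem and supplies a proof, namely the one-line reference ``Proof of the theorem uses the same argument as in the paper \cite{BR} (Theorem 5.5).'' In other words, the author intends the substantive rigidity assertion: if $H^{2}_{HNLA}(\L,\L)=0$, then every formal one-parameter deformation $(\{\cdot_\l\cdot\}_t,\N_t,\a)$ of $(\L_{\N},\{\cdot_\l\cdot\},\a)$ is equivalent to the trivial deformation. The cited argument is the standard one: the preceding Proposition shows the infinitesimal $(\{\cdot_\l\cdot\}_1,\N_1)$ is a $2$-cocycle, vanishing of $H^2$ makes it a coboundary $d^1_{HNLA}(\psi_1,0)$, and $\psi_t=\mathrm{Id}+t\psi_1$ produces an equivalent deformation with vanishing first-order term; one then iterates.

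So the gap in your proposal is that you stop at well-posedness of $H^2_{HNLA}(\L,\L)$ and explicitly decline to give the deformation-theoretic argument, declaring it ``outside the definitional statement.'' That is precisely the content the paper means to establish (by citation). You already sketch the correct mechanism in your closing remark --- combining the $2$-cocycle identification with an inductive removal of the infinitesimal --- so to align with the paper you should promote that remark to the actual proof, or at minimum cite \cite{BR} Theorem 5.5 as the paper does.
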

\begin{proof}Proof of the theorem uses the same argument as in the paper \cite{BR}(Theorem 5.5).\end{proof}

  \section{Hom-$NS$-Leibniz conformal algebras and Twisted Rota-Baxter operators}
  \begin{defn}
	A $\mathbb{C}[\p]$-module $\L$ is called Hom-$NS$-Leibniz conformal algebra together with the conformal sesqui-linear maps $\triangleleft_{\l},  \triangleright_{\l}, \vee_\l: \L \otimes \L \to \L[\l]$ in which $\L$ is skew-symmetric and a linear map $\a : \L \to \L$ that satisfies $$\a(p\lt_{\l}q)=\a(p)\lt_\l \a(q),\quad \a(p\rt_{\l}q)=\a(p)\rt_\l \a(q)\quad \a(p\vee_{\l}q)= \a(p)\vee_\l \a(q)$$ and the following identities
\begin{equation}\label{eq28}
\begin{aligned}
\a(p)\rt_\l(q*_\m r)&= (p\rt_\l q)\rt_{\l+\m} \a(r)+\a(q)\lt_\m (a\rt_\l c),\\
\a(p)\lt_\l(q\rt_\m r)&= (p\lt_\l q)\rt_{\l+\m} \a(r)+\a(q)\rt_\m (a*_\l c),\\
\a(p)\lt_\l(q\lt_\m r)&= (p*_\l q)\lt_{\l+\m} \a(r)+\a(q)\lt_\m (a\lt_\l c),
\\
\a(p)\vee_\l(q*_\m r)
-\a(q)\vee_\m(a*_\l r)
-(p*_\l q)\vee_{\l+\m} \a(r)&\\
+\a(p)\lt_\l(q\vee_\m r)
-\a(q)\lt_\m(p\vee_\l r)
-(p\vee_\l q) \rt_{\l+\m}\a(r)
&=0.\end{aligned}\end{equation}for $p,q,r \in \L$. Here $p*_\l q= p \lt_\l q +p \rt_{\l} q+ p \vee_\l q.$
\end{defn} 
We represent Hom-$NS$-Leibniz conformal algebra by $(\L, \lt_\l,\rt_\l, \vee_\l, \a)$. Note that, we can obtain $NS$-Leibniz  conformal algebra and a Hom-Leibniz conformal algebra by considering $\a=Id$ and $\lt_\l,\rt_\l=0$ respectively. 
\begin{prop}\label{prophomNSleib}
  Let $(\L, \rt_\l,\lt_\l, \vee_\l, \a)$ be a Hom-NS-Leibniz conformal algebra, Then $(\L, *_\l,\a)$ forms a Hom-Leibniz conformal algebra, Where $p*_\l q= [p_\l q]$, for all $p,q\in \L$.
\end{prop}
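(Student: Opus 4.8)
The plan is to verify directly that the bracket $[p_\l q] := p *_\l q = p \lt_\l q + p \rt_\l q + p \vee_\l q$ satisfies the three axioms of a Hom-Leibniz conformal algebra from Definition~\ref{defdef}, using the multiplicativity and conformal sesqui-linearity of the three constituent operations together with the four identities in \eqref{eq28}. The commutation $\a\p = \p\a$ and the conformal sesqui-linearity of $*_\l$ are immediate, since each of $\lt_\l$, $\rt_\l$, $\vee_\l$ is assumed conformal sesqui-linear, and sesqui-linearity is additive in the operation. Likewise the multiplicativity $\a[p_\l q] = [\a(p)_\l \a(q)]$ follows at once by summing the three given relations $\a(p\lt_\l q)=\a(p)\lt_\l\a(q)$, etc.

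The substantive step is the Hom-Leibniz conformal identity
\[
[\a(p)_\l [q_\m r]] = [[p_\l q]_{\l+\m}\a(r)] + [\a(q)_\m [p_\l r]],
\]
i.e.\ $\a(p) *_\l (q *_\m r) = (p *_\l q) *_{\l+\m} \a(r) + \a(q) *_\m (p *_\l r)$. First I would expand the left-hand side as $\a(p)\lt_\l(q*_\m r) + \a(p)\rt_\l(q*_\m r) + \a(p)\vee_\l(q*_\m r)$, and similarly expand each term on the right into its $\lt,\rt,\vee$ components (noting $(p*_\l q)*_{\l+\m}\a(r)$ produces nine summands, and so does $\a(q)*_\m(p*_\l r)$). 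Then I would substitute: for the $\a(p)\rt_\l(q*_\m r)$ term the first relation of \eqref{eq28}; for $\a(p)\lt_\l(q\rt_\m r)$ the second; for $\a(p)\lt_\l(q\lt_\m r)$ the third; and for the remaining combination — which after collecting the $\vee$-containing pieces is exactly the left-hand side minus right-hand side of the fourth (quadruple) identity in \eqref{eq28} — invoke that last identity to conclude the difference vanishes. Concretely, the first three identities dispose of all terms not involving $\vee_\l$ (after one also uses the third identity again with roles of the inner operations tracked carefully, since $q*_\m r$ unpacks $\lt,\rt,\vee$ and the third identity handles the $\lt$ piece while the first two handle the $\rt$ and the cross terms), and the fourth identity is precisely engineered to kill the residual $\vee$-terms.

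The main obstacle is bookkeeping: matching each of the roughly two dozen expanded summands on the two sides to the correct instance of an identity in \eqref{eq28}, and in particular recognizing that the combination of terms left over after applying the first three identities coincides verbatim with the fourth identity's left-hand side. I would organize this by grouping terms according to which of $\lt,\rt,\vee$ occupies the outermost and innermost slots, showing that the "no $\vee$ anywhere" group cancels via identities one through three and that the complementary group cancels via identity four. Once the Hom-Leibniz conformal identity is established, all axioms of Definition~\ref{defdef} hold, so $(\L, *_\l, \a)$ is a Hom-Leibniz conformal algebra, completing the proof.
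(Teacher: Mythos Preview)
Your proposal is correct and follows essentially the same route as the paper's proof: verify multiplicativity directly, then expand the Hom-Leibniz conformal identity for $*_\l$ into its $\lt_\l,\rt_\l,\vee_\l$ constituents and invoke the four axioms in \eqref{eq28} to obtain the cancellation. The paper simply writes out the three nine-term expansions and asserts that \eqref{eq28} yields the result, whereas you give a cleaner organizational scheme (identity~1 for the outer-$\rt_\l$ block, identities~2 and~3 for the outer-$\lt_\l$ block with inner $\rt_\m$ and $\lt_\m$, and identity~4 for exactly the six residual $\vee$-containing terms), but the substance is identical.
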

\begin{proof}First we observe that, \begin{align*}
    \a([p_\l q])=& \a(p\lt_\l q+p\rt_\l q+ p\vee_\l q)\\&=\a(p \lt_\l q)+ \a(q\rt_\l p)+ \a(p\vee_\l q)\\&= \a(p)\lt_\l \a(q)- \a(q)\rt_{\l} \a(p)+ \a(p)\vee_\l \a(q)\\&= [\a(p)_\l \a(q)].
\end{align*}
 Next, for any $p,q, r \in \L$, we have\begin{align*}[\a(p)_\l[q_\m r]]=&[\a(p)_\l(q\lt_\m r+ q \rt_{\m}r+q\vee_\m r)]\\=&
 [\a(p)_\l(q\lt_\m r)]
 +[\a(p)_\l (q\rt_{\m}r)]+
 [\a(p)_\l (q\vee_\m r)]\\=&
 \a(p)\lt_\l(q\lt_\m r)
 + \a(p)\rt_\l (q\lt_\m r)
 +\a(p)\vee_\l(q\lt_\m r)\\&
+ \a(p)\lt_\l(q\rt_\m r)
 + \a(p)\rt_\l (q\rt_\m r)
 +\a(p)\vee_\l(q\rt_\m r)\\&
+ \a(p)\lt_\l(q\vee_\m r)
 + \a(p)\rt_\l (q\vee_\m r)
 +\a(p)\vee_\l(q\vee_\m r).
\end{align*}Likewise, we have \begin{align*}[\a(q)_\l[p_\m r]]=&[\a(q)_\m(p\lt_\l r+ p \rt_{\l}r+p\vee_\l r)]\\=&
 [\a(q)_\m(p\lt_\l r)]
 +[\a(q)_\m (p\rt_{\l}r)]+
 [\a(q)_\m (p\vee_\l r)]\\=&
 \a(q)\lt_\m(p\lt_\l r)
 + \a(q)\rt_\m (p\lt_\l r)
 +\a(q)\vee_\m(p\lt_\l r)\\&
+ \a(q)\lt_\m(p\rt_\l r)
 + \a(q)\rt_\m (p\rt_\l r)
 +\a(q)\vee_\m(p\rt_\l r)\\&
+ \a(q)\lt_\m(p\vee_\l r)
 + \a(q)\rt_\m (p\vee_\l r)
 +\a(q)\vee_\m(p\vee_\l r),\end{align*}
 and
\begin{align*}[{[p_\l q]}_{\l+\m}\a(r)]=&
 [(p\lt_\l q+p\rt_{\l} q+ p\vee_\l q)_{\l+\m}\a(r)]\\=&
(p\lt_\l q)\lt_{\l+\m}\a(r)+(p\lt_{\l} q)\lt_{\l+\m}\a(r)+ (p\vee_\l q)\lt_{\l+\m}\a(r)\\&
 +(p\lt_\l q)\rt_{\l+\m}\a(r)+(p\lt_{\l} q)\rt_{\l+\m}\a(r)+ (p\vee_\l q)\rt_{\l+\m}\a(r)\\&
+(p\lt_\l q)\vee_{\l+\m}\a(r)+(p\lt_{\l} q)\vee_{\l+\m}\a(r)+ (p\vee_\l q)\vee_{\l+\m}\a(r).\end{align*} By using above equations and Eq. \eqref{eq28}, we see that  the following identity holds
$$[\a(p)_\l[q_\m r]]- [\a(q)_\m[p_\l r]]- [[p_\l q]_{\l+\m}\a(r)]=0.$$This completes the proof. \end{proof}
The Hom-Leibniz conformal algebra developed from Hom-$NS$-Lie conformal algebra $(\L, \lt_\l, \rt_\l, \vee_\l, \a)$ using the Proposition \eqref{prophomNSleib} is also called its adjacent Hom-Leibniz conformal algebra.\\
 Let $(\L, \rt_\l,\lt_\l,\vee_\l)$ be a $NS$-Leibniz conformal algebra. A linear map $\a: \L \to\L$ is said to be an $NS$-Leibniz conformal algebra morphism if for all $p,q\in \L$, $\a$ satisfies 
 \begin{align*}
     \a(p\lt_\l q)&=\a(p)\lt_\l \a(q),\\
     \a(p\rt_\l q)&= \a(p)\rt_\l \a(q),\\
     \a(p\vee_\l q)&= \a(p)\vee_\l \a(q).
 \end{align*} Note that, we can construct a Hom-$NS$-Leibniz conformal algebra from an $NS$-Leibniz conformal algebra by using morphism map $\a$.
\begin{prop}Let $\a: \L\to \L$ be an $NS$-Leibniz conformal algebra morphism of the $NS$-Leibniz conformal algebra $(\L, \rt_\l,\lt_\l,\vee_\l)$. Then $(\L, \rt_\l^{\a},\lt_\l^{\a},\vee_{\l}^{\a}, \a)$  is a Hom-$NS$-Leibniz conformal algebra, where $p\lt_{\l}^{\a}q= \a(p\lt_{\l}q)$ , $p\rt_{\l}^{\a}q= \a(p\rt_{\l}q)$, and $p\vee_{\l}^{\a}q=\a(p\vee_{\l}q)$.\end{prop}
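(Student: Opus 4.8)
The plan is to run the standard twisting argument: I will check that each axiom of a Hom-$NS$-Leibniz conformal algebra for the twisted operations $\lt_\l^\a,\rt_\l^\a,\vee_\l^\a$ is obtained by applying the morphism $\a$ an appropriate number of times to the corresponding axiom of the untwisted algebra $(\L,\rt_\l,\lt_\l,\vee_\l)$ (which is exactly a Hom-$NS$-Leibniz conformal algebra with $\a=\mathrm{Id}$, hence satisfies \eqref{eq28} with $\a=\mathrm{Id}$). The single observation that drives everything is that the twisted combined product satisfies
\begin{align*}
p*_\l^\a q := p\lt_\l^\a q + p\rt_\l^\a q + p\vee_\l^\a q = \a(p\lt_\l q)+\a(p\rt_\l q)+\a(p\vee_\l q) = \a(p*_\l q).
\end{align*}

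First I would dispatch the easy conditions. Conformal sesqui-linearity of $\lt_\l^\a$ (and likewise of $\rt_\l^\a$, $\vee_\l^\a$) follows from that of $\lt_\l$ together with the facts that $\a$ is $\mathbb{C}$-linear and, being a morphism of $\mathbb{C}[\p]$-modules, commutes with $\p$: indeed $(\p p)\lt_\l^\a q = \a((\p p)\lt_\l q) = -\l\,\a(p\lt_\l q) = -\l\,(p\lt_\l^\a q)$ and $p\lt_\l^\a(\p q)=\a(p\lt_\l\p q)=(\p+\l)\a(p\lt_\l q)=(\p+\l)(p\lt_\l^\a q)$. Multiplicativity of $\a$ with respect to the new operations reduces to the old one: $\a(p\lt_\l^\a q)=\a^2(p\lt_\l q)=\a(\a(p)\lt_\l\a(q))=\a(p)\lt_\l^\a\a(q)$, and similarly for $\rt_\l^\a$ and $\vee_\l^\a$. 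The skew-symmetry imposed on $\L$ is a $\mathbb{C}[\p]$-bilinear identity on the product, so it is likewise preserved because $\a$ commutes with $\p$.

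The substantive part is verifying the four identities of \eqref{eq28} for the twisted operations, and I would treat the first one as a template. Using multiplicativity of $\a$ together with $q*_\m^\a r=\a(q*_\m r)$, each side collapses onto $\a^2$ of an untwisted expression:
\begin{align*}
\a(p)\rt_\l^\a(q*_\m^\a r) &= \a\big(\a(p)\rt_\l\a(q*_\m r)\big)=\a^2\big(p\rt_\l(q*_\m r)\big),\\
(p\rt_\l^\a q)\rt_{\l+\m}^\a\a(r) + \a(q)\lt_\m^\a(p\rt_\l^\a r) &= \a^2\big((p\rt_\l q)\rt_{\l+\m} r\big)+\a^2\big(q\lt_\m(p\rt_\l r)\big).
\end{align*}
Applying $\a^2$ to the first identity of \eqref{eq28} in the untwisted algebra, namely $p\rt_\l(q*_\m r)=(p\rt_\l q)\rt_{\l+\m} r+q\lt_\m(p\rt_\l r)$, shows the two sides coincide. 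The second and third identities are handled in exactly the same way, the $\a$'s migrating outward by multiplicativity.

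The main obstacle — though it is bookkeeping rather than a genuine difficulty — is the fourth identity of \eqref{eq28}, which has six summands of several distinct shapes (such as $\a(p)\vee_\l(q*_\m r)$, $(p*_\l q)\vee_{\l+\m}\a(r)$, and $\a(p)\lt_\l(q\vee_\m r)$). I would pull all the $\a$'s out of each summand exactly as above, observing that every summand becomes $\a^2$ of the matching summand of the untwisted fourth identity; since that identity holds in $(\L,\rt_\l,\lt_\l,\vee_\l)$, applying $\a^2$ gives the twisted identity. Having verified all the defining conditions, I conclude that $(\L,\rt_\l^\a,\lt_\l^\a,\vee_\l^\a,\a)$ is a Hom-$NS$-Leibniz conformal algebra.
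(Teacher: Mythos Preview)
Your proof is correct and follows essentially the same approach as the paper: both reduce each twisted identity to $\a^2$ applied to the corresponding untwisted $NS$-Leibniz identity via multiplicativity of $\a$ and the observation $p*_\l^\a q=\a(p*_\l q)$. You are somewhat more thorough than the paper (which only writes out the first identity and says ``similarly'' for the rest), but the method is the same.
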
\begin{proof}The map $\a$ satisfies \begin{equation*}\a(p\lt_{\l}^{\a}q)=\a(p)\lt_{\l}^{\a}\a(q),\quad \a(p\rt_{\l}^{\a}q)=\a(p)\rt_{\l}^{\a}\a(q)\quad and\quad \a(p\vee_{\l}^{\a}q)=\a(p)\vee_{\l}^{\a}\a(q).\end{equation*}
Next, we notice that 
\begin{align*}
    \a(p)\rt_\l^\a(q*_\m^\a r)&= \a(\a(p)\rt_\l \a(q*_\m  r))\\&=
    \a^2(p\rt_\l (q*_\m  r))\\&= \a^2((p\rt_\l q)\rt_{\l+\m} r+ 
 q\lt_\m (a\rt_\l c))\\&= \a^2((p\rt_\l q)\rt_{\l+\m} r)+ 
\a^2( q\lt_\m (a\rt_\l c))\\&= \a(\a(p\rt_\l q)\rt_{\l+\m} \a (r))+  \a( \a (q)\lt_\m \a(a\rt_\l c))
\\&= \a( (p\rt_\l^\a q)\rt_{\l+\m}  r)+  \a(q\lt_\m (p\rt_\l^\a r))\\&= (p\rt_\l^\a q)\rt_{\l+\m}^\a  r+   q\lt_\m^\a (p\rt_\l^\a r).
\end{align*}
Similarly, we can show other identities of Hom-NS-Leibniz conformal algebra. This completes the proof.
\end{proof}
In the next propositions, we show how the Nijenhuis operator, Rota-Baxter operator, and twisted Rota-Baxter operator yield Hom-NS-Leibniz conformal algebras.
\begin{prop}\label{prop5.4}Let $\N : \L \to \L$ be a Nijenhuis operator on the Hom-Leibniz conformal algebra $(\L, [\cdot_\l \cdot], \a)$, then $(\L, \lt_\l,\rt_\l,\vee_\l, \a)$ is a Hom-$NS$-Leibniz conformal algebra, where
	$$p\lt_\l q= [\N(p)_\l q],~~~p\rt_\l q= [p_\l \N(q)]~~~ and ~~~~~p\vee_\l q= -\N[p_\l q].$$
\end{prop}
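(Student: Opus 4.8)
The plan is to verify the three multiplicativity conditions and then the four identities of \eqref{eq28}, in each case reducing the statement to the Hom-Leibniz conformal identity for $[\cdot_\l\cdot]$ together with the two defining properties $\a\circ\N=\N\circ\a$ and $[\N(p)_\l\N(q)]=\N\bigl([\N(p)_\l q]+[p_\l\N(q)]-\N[p_\l q]\bigr)$ of a Nijenhuis operator. The multiplicativity part is immediate: $\N$ is $\mathbb{C}[\p]$-linear, so $\lt_\l,\rt_\l,\vee_\l$ are conformal sesqui-linear, and $\a(p\lt_\l q)=\a[\N(p)_\l q]=[\N(\a(p))_\l\a(q)]=\a(p)\lt_\l\a(q)$, with the same computation for $\rt_\l$ and $\vee_\l$. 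Two facts will carry the rest of the argument: (i) $p*_\l q=[\N(p)_\l q]+[p_\l\N(q)]-\N[p_\l q]$ is exactly the deformed bracket of Proposition \eqref{prop2.9} (so $(\L,*_\l,\a)$ is already a Hom-Leibniz conformal algebra); and (ii) the Nijenhuis condition says precisely $\N(p*_\l q)=[\N(p)_\l\N(q)]$, i.e. $\N$ pulls through any $*$-product to an ordinary bracket of $\N$-images.

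For the first three identities of \eqref{eq28} I would rewrite the left-hand side as a single original bracket $[\a(x)_\l[y_\m z]]$: with $\N(q*_\m r)=[\N(q)_\m\N(r)]$ one gets $\a(p)\rt_\l(q*_\m r)=[\a(p)_\l[\N(q)_\m\N(r)]]$; from the definitions $\a(p)\lt_\l(q\rt_\m r)=[\a(\N(p))_\l[q_\m\N(r)]]$ and $\a(p)\lt_\l(q\lt_\m r)=[\a(\N(p))_\l[\N(q)_\m r]]$. Applying the Hom-Leibniz conformal identity to the triples $(p,\N(q),\N(r))$, $(\N(p),q,\N(r))$ and $(\N(p),\N(q),r)$ respectively, then using $\a\N=\N\a$ and fact (ii) to refold $[\N(p)_\l\N(q)]$ or $[\N(p)_\l\N(r)]$ back into a $*$-product, one recovers exactly the two terms on the right-hand side of each. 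Each of these is a three-line computation with no subtlety.

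The fourth identity is the only delicate point. The key observation is that, after applying the Nijenhuis condition to pull $\N$ through the relevant product, \emph{every} one of the six terms has the form $-\N(\,\cdot\,)$; for instance $\a(p)\lt_\l(q\vee_\m r)=-[\N(\a(p))_\l\N[q_\m r]]=-\N\bigl([\N(\a(p))_\l[q_\m r]]+[\a(p)_\l\N[q_\m r]]-\N[\a(p)_\l[q_\m r]]\bigr)$, and similarly for $\a(q)\lt_\m(p\vee_\l r)$ and $(p\vee_\l q)\rt_{\l+\m}\a(r)$. Hence the alternating sum equals $-\N(E)$ for one explicit expression $E$ made of ordinary $\l$-brackets, and it suffices to prove $E=0$. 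After expanding $q*_\m r$, $p*_\l r$ and $p*_\l q$, the eighteen resulting terms organize into: three ``mixed'' pairs $\pm[\a(p)_\l\N[q_\m r]]$, $\pm[\a(q)_\m\N[p_\l r]]$, $\pm[\N[p_\l q]_{\l+\m}\a(r)]$ that cancel outright; the remaining three terms carrying an outer $\N$, which combine into $-\N\bigl([\a(p)_\l[q_\m r]]-[[p_\l q]_{\l+\m}\a(r)]-[\a(q)_\m[p_\l r]]\bigr)$ and vanish by the Hom-Leibniz conformal identity for $[\cdot_\l\cdot]$; and the last nine terms, which split into three triples that are precisely the Hom-Leibniz conformal identity applied to $(\N(p),q,r)$, to $(p,\N(q),r)$ and to $(p,q,\N(r))$, hence each vanishes. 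Thus $E=0$ and the identity follows.

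I expect the main obstacle to be purely organizational: before the reductions above, the fourth identity is a sum of roughly eighteen original $\l$-brackets, and the computation only closes if one expands the $*$-products and applies the Hom-Leibniz conformal identity in the order that exposes the three-pair / three-triple cancellation pattern. Conceptually nothing beyond facts (i), (ii) and the Hom-Leibniz conformal identity is needed.
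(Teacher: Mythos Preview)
Your proposal is correct and follows essentially the same route as the paper: both verify multiplicativity directly, reduce the first three identities of \eqref{eq28} to a single application of the Hom-Leibniz conformal identity using $\N(p*_\l q)=[\N(p)_\l\N(q)]$, and handle the fourth identity by applying the Nijenhuis condition to the three $\lt/\rt$ terms so that everything sits under a common outer $\N$, then cancelling inside. Your $3$ pairs / $3$ outer-$\N$ terms / $9$ remaining terms bookkeeping is exactly the grouping the paper performs (it writes the same eighteen summands as three vanishing Hom-Leibniz triples for $(\N(p),q,r)$, $(p,\N(q),r)$, $(p,q,\N(r))$, one $\N$-image of the Hom-Leibniz identity for $(p,q,r)$, and three cancelling pairs), so there is no substantive difference.
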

\begin{proof}As we can show that
	\begin{eqnarray*}
	\begin{aligned}
	\a(p\lt_\l q)&=\a([\N (p)_\l q])= [\a(\N (p))_\l \a(q)]= [\N(\a(p))_\l \a(q)]=\a(p)\lt_\l \a(q),\\
     \a(p\rt_\l q)&=\a([p_\l \N(q)])= [\a (p)_\l \a(\N(q))]= [\a(p)_\l \N(\a(q))]=\a(p)\rt_\l \a(q),\\ 
     \a(p\vee_\l q)&=-\a (\N[ p_\l q])= -\N([\a(p)_\l \a(q)])=\a(p)\vee_\l \a(q).
	\end{aligned}
\end{eqnarray*}Next, we show that Eq. \eqref{eq28} holds. For any $p,q, r \in \L$, we have
	\begin{align*}
\a(p)\rt_\l(q*_\m r)&= [\a(p)_\l\N(q\lt_\m r+q\rt_\m r+q\vee_\m r)]\\&= [\a(p)_\l\N(q\lt_\m r)]+[\a(p)_\l\N(q\rt_\m r)]+[\a(p)_\l\N(q\vee_\m r)]\\&=[\a(p)_\l\N([\N q_\m r])]+[\a(p)_\l\N([q_\m \N r])]-[\a(p)_\l\N^2([q_\m r])]\\&=[\a(p)_\l [\N q_\m \N r]]\\&=[([p _\l \N q]) _{\l+\m} \a(\N r)]+[\a\N (q)_\m ([p_\l \N r])]\\&= (p\rt_\l q)\rt_{\l+\m} \a(r)+\a(q)\lt_\m (p\rt_\l r).\end{align*} 
\begin{align*}
    \a(p)\lt_\l(q\rt_\m r)&=
    [\N\a(p)_\l([q_\m \N (r)]])\\&= [([\N p_\l q])_{\l+\m} \N \a(r)]+[a(q)_\m [\N (p)_\l\N (r)]]\\&= [([\N p_\l q])_{\l+\m} \N \a(r)]+[a(q)_\m \N (p*_\l r)]\\&= (p\lt_\l q)\rt_{\l+\m} \a(r)+\a(q)\rt_\m (p*_\l r).
\end{align*}
Similarly, we can show the 3rd axiom, while the 4th axiom is given as follows.
\begin{align*}
&\a(p)\vee_\l(q*_\m r)
-\a(q)\vee_\m(p*_\l r)
-(p*_\l q)\vee_{\l+\m} \a(r)\\&
+\a(p)\lt_\l(q\vee_\m r)
-\a(q)\lt_\m(p\vee_\l r)
-(p\vee_\l q) \rt_{\l+\m}\a(r)\\
&=-\N [\a(p)_\l(q*_\m r)]
+\N[\a(q)_\m(p*_\l r)]
+\N[(p*_\l q)_{\l+\m} \a(r)]\\&
-[\N\a(p)_\l \N([q_\m r])]
+[\N\a(q)_\m \N([p_\l r])]
+[\N([p_\l q])  _{\l+\m}\N\a(r)]\\
&=-\N [\a(p)_\l([\N(q)_\m r] + [q_\m \N(r)] - \N[q_\m r])]\\&
+\N[\a(q)_\m([\N(p)_\l r] + [p_\l \N(r)] - \N[p_\l r])]\\&
+\N[([\N(p)_\l q] + [p_\l \N(q)] - \N[p_\l q])_{\l+\m} \a(r)]\\&
-\N ([\N\a(p)_\l [q_\m r]]+[\a(p)_\l \N([q_\m r])]-\N[\a(p)_\l [q_\m r]])\\& +\N ([\N\a(q)_\m [p_\l r]]+[\a(q)_\m \N([p_\l r])]-\N[\a(q)_\m [p_\l r]])\\&
+\N([\N([p_\l q])  _{\l+\m}\a(r)]+[[p_\l q] _{\l+\m}\N\a(r)]-\N[[p_\l q]_{\l+\m}\a(r)]\\
&=\N (-[\a(p)_\l[\N(q)_\m r]]+ [[p_\l \N(q)]_{\l+\m} \a(r)] +[\N\a(q)_\m [p_\l r]])
\\&+\N (- [\a(p)_\l[q_\m \N(r)]] +[[p_\l q] _{\l+\m}\N\a(r)] + [\a(q)_\m[p_\l \N(r)]])\\&
+\N([\a(q)_\m([\N(p)_\l r]] -[\N\a(p)_\l [q_\m r]] +[[\N(p)_\l q]_{\l+\m} \a(r)])
\\&+\N(-[\a(q)_\m \N[p_\l r]]- [\N[p_\l q])_{\l+\m} \a(r)]-[\a(p)_\l \N([q_\m r])])\\&
+\N (\N[\a(p)_\l [q_\m r]]-\N[[p_\l q]_{\l+\m}\a(r)]-\N[\a(q)_\m [p_\l r]])
\\& +\N ([\a(q)_\m \N[p_\l r]])
+\N([\N[p_\l q]_{\l+\m}\a(r)])
+ \N([\a(p)_\l\N[q_\m r]])\\&=0+0+0+\N(-[\a(q)_\m \N[p_\l r]]- [\N[p_\l q])_{\l+\m} \a(r)]-[\a(p)_\l \N([q_\m r])])\\&+0+\N ([\a(q)_\m \N[p_\l r]])
+\N([\N[p_\l q]_{\l+\m}\a(r)])
+ \N([\a(p)_\l\N[q_\m r]])\\&=0
\end{align*}
It completes the proof.	
\end{proof}Next, we show that Rota-Baxter operator of weight $\theta$ induces a Hom-NS-Leibniz conformal algebra.
\begin{prop}Let $\R: \L \to \L$ be a Rota-Baxter operator on the  Hom-Leibniz conformal algebra $(\L, [\cdot_\l \cdot], \a)$, then $(\L, \lt_\l,\rt_\l,\vee_\l,\a)$ is a Hom-$NS$-Leibniz conformal algebra, where
	\begin{align*}
	    p\lt_\l q= [\R(p)_\l q],\quad p\rt_\l q= [p_\l \R (q)]\quad and \quad p\vee_\l q= \theta([p_\l q]).
	\end{align*}
\end{prop}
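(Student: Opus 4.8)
The plan is to follow the pattern of the proof of Proposition~\ref{prop5.4}, with the weight-$\theta$ Rota-Baxter axiom in the role of the Nijenhuis identity and with $p\vee_\l q=\theta[p_\l q]$ in place of $-\N[p_\l q]$. Write $p*_\l q:=p\lt_\l q+p\rt_\l q+p\vee_\l q=[\R(p)_\l q]+[p_\l\R(q)]+\theta[p_\l q]$; then the defining identity of $\R$ is exactly
\[
\R(p*_\l q)=[\R(p)_\l\R(q)],
\]
and this relation together with the Hom-Leibniz conformal identity
\[
[\a(a)_\l[b_\m c]]=[[a_\l b]_{\l+\m}\a(c)]+[\a(b)_\m[a_\l c]]\qquad(\ast)
\]
carries the whole verification.

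First I would dispose of the three multiplicativity conditions: from $\a\circ\R=\R\circ\a$ and $\a[p_\l q]=[\a(p)_\l\a(q)]$ we get $\a(p\lt_\l q)=\a[\R(p)_\l q]=[\R(\a(p))_\l\a(q)]=\a(p)\lt_\l\a(q)$, and likewise for $\rt_\l$ and $\vee_\l$; conformal sesqui-linearity of the three operations is inherited from that of $[\cdot_\l\cdot]$ together with the $\p$-linearity of $\R$. Next I would check the first three identities of \eqref{eq28}, each of which collapses to one application of $(\ast)$. For the first, $\a(p)\rt_\l(q*_\m r)=[\a(p)_\l\R(q*_\m r)]=[\a(p)_\l[\R(q)_\m\R(r)]]$, and $(\ast)$ with $(a,b,c)=(p,\R(q),\R(r))$ yields $[[p_\l\R(q)]_{\l+\m}\a(\R(r))]+[\a(\R(q))_\m[p_\l\R(r)]]=(p\rt_\l q)\rt_{\l+\m}\a(r)+\a(q)\lt_\m(p\rt_\l r)$. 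The second identity follows from $(\ast)$ with $(a,b,c)=(\R(p),q,\R(r))$ after recognising $[\a(q)_\m[\R(p)_\l\R(r)]]=[\a(q)_\m\R(p*_\l r)]=\a(q)\rt_\m(p*_\l r)$; the third from $(\ast)$ with $(a,b,c)=(\R(p),\R(q),r)$ after recognising $[[\R(p)_\l\R(q)]_{\l+\m}\a(r)]=[\R(p*_\l q)_{\l+\m}\a(r)]=(p*_\l q)\lt_{\l+\m}\a(r)$.

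For the fourth identity of \eqref{eq28} some care is needed. Substituting the definitions, each of its six terms acquires an overall factor $\theta$ (for instance $\a(p)\lt_\l(q\vee_\m r)=[\R(\a(p))_\l\,\theta[q_\m r]]=\theta[\a(\R(p))_\l[q_\m r]]$), so it suffices to prove
\[
[\a(p)_\l(q*_\m r)]-[\a(q)_\m(p*_\l r)]-[(p*_\l q)_{\l+\m}\a(r)]+[\a(\R(p))_\l[q_\m r]]-[\a(\R(q))_\m[p_\l r]]-[[p_\l q]_{\l+\m}\a(\R(r))]=0 .
\]
Expanding $q*_\m r$, $p*_\l r$ and $p*_\l q$ turns the left-hand side into twelve brackets, which regroup into four packets, each vanishing by $(\ast)$ applied with $(a,b,c)$ equal to $(p,\R(q),r)$, $(\R(p),q,r)$, $(p,q,\R(r))$ and $(p,q,r)$ respectively, the last packet carrying the scalar $\theta$; summing the four zeros gives the identity. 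The only genuinely laborious part is the bookkeeping in this last step---tracking signs and matching each of the twelve brackets to the right instance of $(\ast)$---but it needs nothing beyond what already occurs in the proof of Proposition~\ref{prop5.4}, and I expect this to be the main (purely computational) obstacle.
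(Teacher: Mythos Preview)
Your proposal is correct and follows essentially the same route as the paper's proof: both verify the multiplicativity conditions directly, reduce the first three identities of \eqref{eq28} to single applications of the Hom-Leibniz identity $(\ast)$ via the relation $\R(p*_\l q)=[\R(p)_\l\R(q)]$, and handle the fourth identity by factoring out $\theta$ and splitting the resulting twelve terms into the same four packets (corresponding to $(a,b,c)=(p,\R(q),r)$, $(\R(p),q,r)$, $(p,q,\R(r))$, $(p,q,r)$), each of which vanishes by $(\ast)$. Your write-up is somewhat more explicit about which instance of $(\ast)$ kills which packet, but the argument is the same.
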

\begin{proof}As we can show that
	\begin{eqnarray*}
	\begin{aligned}
	\a(p\lt_\l q)&=\a[\R (p)_\l q]= [\a(\R (p))_\l \a(q)]= [\R(\a(p))_\l \a(q)]=\a(p)\lt_\l \a(q),\\
     \a(p\rt_\l q)&=\a[p_\l \R (q)]= [\a (p)_\l \a(\R (q))]= [\a(p)_\l \R (\a(q))]=\a(p)\rt_\l \a(q),\\ \a(p\vee_\l q)&=\a \theta [ p_\l q]= \theta[\a(p)_\l \a(q)]=\a(p)\vee_\l \a(q).
	\end{aligned}
\end{eqnarray*}Next, we show that Eq. \eqref{eq28} holds. For any $p,q, r \in \L$, we have
	\begin{align*}
\a(p)\rt_\l(q*_\m r)&= [\a(p)_\l\R (q\lt_\m r+q\rt_\m r+q\vee_\m r)]\\&= [\a(p)_\l\R (q\lt_\m r)]+[\a(p)_\l\R (q\rt_\m r)]+[\a(p)_\l\R (q\vee_\m r)]\\&=[\a(p)_\l\R ([\R  (q)_\m r])]+[\a(p)_\l\R ([q_\m \R (r)])]+\theta[\a(p)_\l\R ([q_\m r])]\\&=[\a(p)_\l [\R  (q)_\m \R  (r)]]\\&=[([p_\l \R  (q)]) _{\l+\m} \a(\R  (r))]+[\a\R  (q)_\m ([p_\l \R  (r)])]\\&= (p\rt_\l q)\rt_{\l+\m} \a(r)+\a(q)\lt_\m (p\rt_\l r),\end{align*} 
\begin{align*}
    \a(p)\lt_\l(q\rt_\m r)&=
    [\R \a(p)_\l([q_\m \R  (r)]])\\&= [([\R  p_\l q])_{\l+\m} \R  \a(r)]+[a(q)_\m [\R  (p)_\l\R  (r)]]\\&= [([\R  p_\l q])_{\l+\m} \R  \a(r)]+[a(q)_\m \R  (p*_\l r)]\\&= (p\lt_\l q)\rt_{\l+\m} \a(r)+\a(q)\rt_\m (p*_\l r).
\end{align*}
Similarly, we can show the 3rd axiom, while the 4th axiom is given as follows.
\begin{align*}
&\a(p)\vee_\l(q*_\m r)
-\a(q)\vee_\m(p*_\l r)
-(p*_\l q)\vee_{\l+\m} \a(r)\\&
+\a(p)\lt_\l(q\vee_\m r)
-\a(q)\lt_\m(p\vee_\l r)
-(p\vee_\l q) \rt_{\l+\m}\a(r)\\
&=\theta  [\a(p)_\l(q*_\m r)]
-\theta [\a(q)_\m(p*_\l r)]
-\theta [(p*_\l q)_{\l+\m} \a(r)]\\&
+\theta[\R \a(p)_\l  [q_\m r] ]
-\theta[\R \a(q)_\m    [p_\l r] ]
-\theta[[p_\l q]_{\l+\m}\R \a(r)]\\
&=\theta  ([\a(p)_\l([\R (q)_\m r] + [q_\m \R (r)] +\theta [q_\m r]))])\\&
-\theta ([\a(q)_\m([\R (p)_\l r] + [p_\l \R (r)] +\theta  [p_\l r]))])\\&
-\theta ([([\R (p)_\l q] + [p_\l \R (q)] +\theta  [p_\l q])_{\l+\m} \a(r)])\\&
+\theta[\R \a(p)_\l  [q_\m r] ]
-\theta[\R \a(q)_\m    [p_\l r] ]
-\theta[[p_\l q]_{\l+\m}\R \a(r)]\\&
=\theta [\a(p)_\l [\R (q)_\m r]] + \theta [\a(p)_\l [q_\m \R (r)]] + \theta^2 [\a(p)_\l [q_\m r]]\\&
-\theta [\a(q)_\m[\R (p)_\l r]] -\theta [\a(q)_\m [p_\l \R (r)]] -\theta^2 [\a(q)_\m [p_\l r]]\\&
-\theta [([\R (p)_\l q])_{\l+\m} \a(r)] - \theta[[p_\l \R (q)]_{\l+\m} \a(r)] - \theta^2[ [p_\l q]_{\l+\m} \a(r)]\\&
+\theta[\R \a(p)_\l [q_\m r] ]
-\theta[\R \a(q)_\m [p_\l r] ]
-\theta[[p_\l q]_{\l+\m}\R \a(r)]\\&
=\theta [\a(p)_\l [\R (q)_\m r]]- \theta[[p_\l \R (q)]_{\l+\m} \a(r)]-\theta[\R \a(q)_\m [p_\l r] ]
\\&
+ \theta [\a(p)_\l [q_\m \R (r)]] -\theta [\a(q)_\m [p_\l \R (r)]]-\theta [[p_\l q]_{\l+\m}\R \a(r)]\\&
-\theta [\a(q)_\m[\R (p)_\l r]]  +\theta[\R \a(p)_\l [q_\m r] ]
-\theta [([\R (p)_\l q])_{\l+\m} \a(r)] \\&
  + \theta^2 [\a(p)_\l [q_\m r]]
 -\theta^2 [\a(q)_\m [p_\l r]]
- \theta^2 [[p_\l q]_{\l+\m} \a(r)]\\&=0+0+0+0=0.
\end{align*}
It completes the proof.	
\end{proof}
Now, we define the Twisted Rota-Baxter operator on Hom-Leibniz conformal algebra that yields a different class of examples on Hom-$NS$-Leibniz conformal algebras.\\
Consider that $(\M,l,r,\b)$ be a representation of a Hom-Lie conformal algebra $(\L, [\cdot_\l \cdot],\a)$. Suppose that $2$-cocycle in the cohomology complex of $(\L,[\cdot_\l \cdot],\a)$ with coefficients in $(\M,l,r,\b)$ is denoted by $\varphi_\l \in C^2_{Hom}(\L,\M)$, such that $\varphi_\l: \L\otimes \L\to \M[\l]$ and satisfying the following axioms:
\begin{align*}
    \varphi_\l (\p p,q) &= -\l \varphi_\l (p,q),\\
    \varphi_\l (p, \p q) &= (\p + \l)\varphi_\l (p,q),\\
    \a(p)_\l \varphi_\m (q,r) - \a(q)_\m \varphi_\l (p,r) - (\varphi_\l (p,q))_{\l+\m} \a(r)&\\+\varphi_{\l} (\a(p), [q_\m r])  - \varphi_\m (\a(q), [p_\l r]) - \varphi_{\l+\m} ([p_\l q], \a(r))&= 0,
\end{align*} for all $p,q,r\in \L.$
\begin{defn}
	An $\varphi$-twisted Rota-Baxter operator is $\mathbb{C}[\p]$-module homomorphism $\T:\M\to\L$ satisfying  \begin{align}\nonumber\a\T&=\T\a,\\
    \label{eq34}
	[\T (m)_\l \T (n)]&=\T(l(\T (m))_\l n+ r(m)_{\l}\T (n)+ \varphi_\l(\T (m), \T (n))) \end{align}for all $m,n\in \M$
\end{defn}
\begin{defn}\label{def2.5}Let $(\M,\beta,l,r)$ be a representation of the Hom-Leibniz conformal algebra $(\L, [\cdot_\l\cdot], \a)$. A $\mathbb{C}[\p]$-module homomorphism $\T: \M \to \L$ is referred to as a $\mathcal{O}$-operator on the Hom-Leibniz conformal algebra  with respect to its representation if the following conditions hold
	\begin{align}\nonumber \T \beta &=\alpha \T,\\ [ \T (m)_\l  \T (n)]&=\T(l(\T m)_{\lambda}(n)+r(m)_\l \T (n)).
	\end{align} for all $m,n\in \M$.
\end{defn}
\begin{ex}Any $\varphi$-twisted Rota-Baxter operator with $\varphi_\l(\T-,\T-) =0$ is simply an $\mathcal{O}$-operator on Hom-Leibniz conformal algebra.
\end{ex}
\begin{ex}Let $(\M,l,r,\b) $ be representation of a Hom-Leibniz conformal algebra $(\L,[\cdot_\l \cdot],\a)$ such that $dim(\L)=\dim(\M)$. Also for any invertible $1$-cochain $h\in C^1_{Hom}(\L,\M)$, the map $h^{-1}:=\T:\M\to\L$ is a $\varphi$-twisted Rota-Baxter operator, where $\varphi= -\delta_{HomL}(h)$.
\begin{equation}\label{eq35}
\begin{aligned}
\varphi_\l(\T (m),\T (n))=-\delta_{HomL}(h)= -(l(\T (m))_{\l} h(\T (n))+ r h(\T (m))_{\l}\T (n)- h([\T (m)_{\l} \T (n)])).
\end{aligned}
\end{equation}Applying $\T$ on the both sides of Eq. \eqref{eq35} and assuming that $\T= h^{-1}$, we get Eq. \eqref{eq34}, that is an $\varphi$-twisted Rota-Baxter identity.\end{ex}
Next, we see that the Twisted Rota-Baxter operator can also obtained by using the Nijenhuis operator $\N$ on the Hom-Leibniz conformal algebra $\L$. The following example clarifies this.
\begin{ex}\label{ex5.9} Consider a Hom-Leibniz conformal algebra $(\L, [\cdot_\l \cdot], \a)$ and a Nijenhuis operator $\N:\L\to\L$ on it. Taking into account the deformed Hom-Leibniz conformal algebra $\L_{\N}=(\L, [\cdot_\l \cdot]_{\N},\a)$ of the Proposition \ref{prop2.9}. This deformed Hom-Leibniz conformal algebra has a representation given by $l(p)_\l q = [\N (p)_\l q]$, $r(p)_\l q = [p_\l \N (q)],$ for $p\in \L_{\N}, q \in \L$. As a result, the map $\varphi_\l:\wedge^2 \L_{\N}\to \L$ given by $\varphi_\l(p,q):= -\N([p_\l q])$ is $2$-cocycle in the Hom-Leibniz conformal algebra's cohomology with the coefficients in $\L$.
\end{ex}
\begin{prop}
Consider an  $H$-twisted Rota-Baxter operator given by $\T: \M\to\L$. Then $(\M,\b, l,r)$ is a Hom-$NS$-Leibniz conformal algebra with the multiplication given by 
\begin{align*}
   m\lt_\l n =l(\T (m))_\l n, \quad m\rt_\l n =r(m)_\l \T (n), \quad  m\vee_\l n = \varphi_\l(\T (m), \T (n)),
\end{align*} for $m, n\in \M$.
\end{prop}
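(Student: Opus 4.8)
The plan is to verify, one by one, the axioms of a Hom-$NS$-Leibniz conformal algebra for the operations $\lt_\l,\rt_\l,\vee_\l$ on $\M$, organizing the whole argument around a single reformulation of the defining equation \eqref{eq34}. Writing
$$m *_\l n := m\lt_\l n + m\rt_\l n + m\vee_\l n = l(\T(m))_\l n + r(m)_\l\T(n) + \varphi_\l(\T(m),\T(n)),$$
the $\varphi$-twisted Rota-Baxter identity says exactly that $\T(m*_\l n) = [\T(m)_\l\T(n)]$, i.e. $\T$ intertwines the total product $*_\l$ on $\M$ with the $\l$-bracket on $\L$. Every Hom-$NS$ axiom will then reduce to a known identity once one substitutes arguments of the form $\T(m)$ and uses this intertwining relation to move between $[\T(m)_\l\T(n)]$ and $\T(m*_\l n)$.

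First I would dispose of the bookkeeping conditions. Conformal sesqui-linearity of $\lt_\l,\rt_\l,\vee_\l$ is inherited termwise from the $\mathbb{C}[\p]$-linearity of $\T$ together with the sesqui-linearity of $l$ and $r$ in Definition~\ref{defrep} and of $\varphi$ in the cocycle axioms. Multiplicativity, say $\b(m\lt_\l n)=\b(m)\lt_\l\b(n)$, follows from $\a\circ\T=\T\circ\b$ combined with the $\b$-equivariance of $l$ and $r$ and the cochain condition $\varphi\circ\a^{\otimes 2}=\b\circ\varphi$ on the $2$-cocycle $\varphi$; e.g.\ $\b(l(\T m)_\l n)=l(\a\T m)_\l\b(n)=l(\T\b m)_\l\b(n)$, and similarly for $\rt_\l$ and $\vee_\l$. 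These are routine term-by-term matches.

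For the four structural identities in \eqref{eq28} I would substitute $\T(m),\T(n),\T(k)$ into the two sides and recognize each as an instance of a representation identity. The first identity of \eqref{eq28} reduces to Eq.~\eqref{eq3} (with distinguished element $m$ and bracket arguments $\T(n),\T(k)$), after using $(m\rt_\l n)\rt_{\l+\m}\b(k)=r(r(m)_\l\T(n))_{\l+\m}\a(\T(k))$ and $\b(n)\lt_\m(m\rt_\l k)=l(\a\T n)_\m(r(m)_\l\T(k))$ on the right and the intertwining identity $\b(m)\rt_\l(n*_\m k)=r(\b m)_\l[\T(n)_\m\T(k)]$ on the left. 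The second identity reduces in the same way to Eq.~\eqref{eq4} (after the index relabelling forced by matching), and the third to the $l$-action compatibility axiom $l(\a(p))_\l(l(q)_\m x)=l([p_\l q])_{\l+\m}\b(x)+l(\a(q))_\m(l(p)_\l x)$ evaluated at $(\T(m),\T(n),k)$; in each case the only nontrivial manoeuvre is using $\T(p*_\l q)=[\T(p)_\l\T(q)]$ to replace a $\T$ of a $*$-product by a bracket.

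The fourth, mixed identity of \eqref{eq28} is where I expect the real work to lie, though it is bookkeeping rather than a conceptual obstacle: after expanding every $\vee$- and $\rt$-slot that contains a $*$-product and collapsing each $\T(p*_\l q)$ to $[\T(p)_\l\T(q)]$ via \eqref{eq34}, I anticipate that the six surviving terms match, one-to-one and with the correct signs and $\l$-indices, the six terms of the $2$-cocycle condition for $\varphi$ evaluated at $(\T(m),\T(n),\T(k))$ --- the three ``$\varphi$ of a bracket'' summands coming from the $\vee$-slots and the $(p\vee q)\rt r$ term, and the three ``module action on $\varphi$'' summands from the $\lt$-of-$\vee$ terms and that same $(p\vee q)\rt r$ term. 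Since that cocycle identity holds by hypothesis, this vanishes. Keeping the signs and index assignments straight across those six terms is the only delicate point; no identity beyond \eqref{eq3}, \eqref{eq4}, the $l$-compatibility axiom, and the cocycle condition on $\varphi$ is used, so assembling these checks completes the proof.
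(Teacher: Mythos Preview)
Your proposal is correct and follows essentially the same route as the paper's proof: both reduce the first three identities of \eqref{eq28} to the module axioms \eqref{eq3}, \eqref{eq4} and the $l$-compatibility condition via the key intertwining $\T(m*_\l n)=[\T(m)_\l\T(n)]$, and both dispatch the fourth identity by recognising it as exactly the $2$-cocycle condition $\delta_{HomL}\varphi_{\l,\m}(\T m,\T n,\T k)=0$. Your write-up is in fact more systematic than the paper's, which carries out the same computations but states the intertwining relation only implicitly.
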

\begin{proof}
To satisfy the properties of Hom-NS-Leibniz conformal algebra, consider that \begin{align*}
    \b(m\lt_\l n )=\b(l(\T (m) )_\l n)=\b(l(\T (m)))_\l\b(n)= l(\b(\T (m)))_\l\b(n) =\b(m)\lt_\l\b(n).
\end{align*} Similarly, we can show that \begin{align*}
    \b(m\lt_\l n )=\b(m)\lt_\l\b(n).
\end{align*}
Next, we have \begin{align*}
    \b(m\vee_\l n )=\b \varphi_\l(\T (m), \T (n) )= \varphi_\l( \a\T (m),\a \T (n))= \varphi_\l(\T\b m,\T \b n)= \b(m)\vee_\l \b (n).
\end{align*} Further, we prove the condition in Eq. \eqref{eq28} one by one, for any $m, n, w \in \M$ 
\begin{align*}
\b(m)\rt_\l(n*_\m w)&= r(\b(m))_\l\T (n\lt_\m w+n\rt_\m w+n\vee_\m w)]\\&= r(\b(m))_\l\T (n\lt_\m w)]+r(\b(m))_\l\T (n\rt_\m w)]+r(\b(m))_\l\T (n\vee_\m r)]\\&=r(\b(m))_\l \T ([n_\m  w]_\T)\\&=r(\b(m))_\l  ([\T (n)_\m  \T (w)])\\&=r(r(m)_\l \T (n))_{\l+\m} \a(\T( w))+ l(\a\T (n))_\m (r(m)_\l \T (w))\\&= (m\rt_\l n)\rt_{\l+\m} \b(w)+\b(n)\lt_\m (m\rt_\l w).\end{align*} 
\begin{align*}
    \b(m)\lt_\l(n\rt_\m w)&=
    l(\T\b(m))_\l(r(n)_\m \T (w)))\\&= r(l(\T (m))_\l n)_{\l+\m} \T\b(w) + l(\b(n))_\m [\T(m)_\l\T (w)]]\\&
    = r(l(\T  m)_\l n)_{\l+\m} \T \b(w) + l(\b(n))_\m \T (m*_\l w)\\&= (m\lt_\l n)\rt_{\l+\m} \b(w)+\b(n)\rt_\m (m*_\l w).
\end{align*}
Similarly, we can show 3rd axiom. Since $\varphi$ is $2$-cocycle, so $$\delta_{HomL}\varphi_{\l,\m}(\T (m),\T (n),\T (w))=0.$$ Hence, we have
\begin{align*}
&\b(m)\vee_\l(n*_\m w)
-\b(n)\vee_\m(m*_\l w)
-(m*_\l n)\vee_{\l+\m} \b(w)\\&
+\b(m)\lt_\l(n\vee_\m w)
-\b(n)\lt_\m(m\vee_\l w)
-(m\vee_\l n) \rt_{\l+\m}\b(w)\\&=0.\end{align*}
It completes the proof.	
\end{proof}
\section*{Declarations}
\subsection*{Competing interests:}
The authors have no competing interests to declare that are relevant to the content of this paper.
\subsection*{Funding:}This work is funded by the Second batch of the Provincial project of the Henan Academy of Sciences (No. 241819105).
\subsection*{Availability of data and materials:}
All data is available within the manuscript.

\end{document}